\documentclass[a4paper,11pt,reqno]{amsart}

\usepackage{amssymb}
\usepackage{latexsym}
\usepackage{amsmath}
\usepackage{mathrsfs}
\usepackage{euscript}
\usepackage{amsthm}
\usepackage{upgreek}
\usepackage{cite}
\usepackage{xcolor}
\usepackage{tikz}
\usepackage{calc}

\newcommand{\scal}[2]{\langle #1,#2\rangle}

\newcommand{\rr}[1]{\mathbf R^{#1}}
\newcommand{\zz}[1]{\mathbf Z^{#1}}

\newcommand{\cc}[1]{\mathbf C^{#1}}
\newcommand{\nn}[1]{\mathbf N^{#1}}

\newcommand{\nm}[2]{\Vert #1\Vert _{#2}}
\newcommand{\NM}[2]{\left \Vert #1\right \Vert _{#2}}

\newcommand{\sets}[2]{\{ \, #1\, ;\, #2\, \} }

\newcommand{\cdo}{\, \cdot \, }

\newcommand{\supp}{\operatorname{supp}}

\newcommand{\eabs}[1]{\langle #1\rangle}
\newcommand{\BD}{\operatorname{BD}}

\newcommand{\vrum}{\vspace{0.1cm}}

\newcommand{\sfW}{\mathsf{W}}

\newcommand{\maclS}{\mathcal S}

\newcommand{\mascF}{\mathscr F}

\newcommand{\mascP}{\mathscr P}
\newcommand{\mascS }{\mathscr S}

\newcommand{\mabfj}{\boldsymbol j}
\newcommand{\mabfk}{\boldsymbol k}


\setcounter{section}{\value{section}-1}  

\numberwithin{equation}{section}          

\newtheorem{thm}{Theorem}
\numberwithin{thm}{section}

\newcommand{\rubrik}{}
\newtheorem{prop}[thm]{Proposition}

\theoremstyle{definition}

\newtheorem{defn}[thm]{Definition}
\newtheorem{example}[thm]{Example}

\theoremstyle{remark}

\newtheorem{rem}[thm]{Remark}              

%
%

\author{Nenad Teofanov}

\address{Department of Mathematics and Informatics,
University of Novi Sad, Novi Sad, Serbia}

\email{nenad.teofanov@dmi.uns.ac.rs}

\author{Joachim Toft}

\address{Department of Mathematics,
Linn{\ae}us University, V{\"a}xj{\"o}, Sweden}

\email{joachim.toft@lnu.se}

\title[Multiplications and convolutions on modulation spaces]
{An excursion to multiplications and convolutions on
modulation spaces}

\keywords{time--frequency analysis, modulation spaces, convolutions, multiplications}

\subjclass[2010]{42B35, 44A15, 46A16, 16W80} 

%
%
%

\begin{document}


\begin{abstract}
We give a self-contained introduction to (quasi-)Banach modulation spaces
of ultradistributions,
and review results on boundedness for multiplications and convolutions
for elements in such spaces. Furthermore, we use these results to study the Gabor product.
As an example, we show how it appears in a phase-space formulation of
the nonlinear cubic Schr\"odinger equation.
\end{abstract}

\maketitle

\section{Introduction}\label{sec0}

Modulation spaces were introduced in Feichtinger's seminal
technical report \cite{Fei1}, and prove themselves as
useful family of Banach spaces of tempered distributions
in time-frequency analysis, \cite{BenOko2, Cordero1, Gro2}.
The main purpose of this survey  article is to enlighten some properties of
modulation spaces
in a rather  self-contained manner. In contrast to the most common
situation,
our analysis includes both quasi-Banach and Banach modulation
spaces within the framework of ultradifferentiable functions and
ultradistributions of Gelfand--Shilov type. For that reason we
collect necessary background material
in a rather detailed preliminary section.

Motivated by recent applications of modulation spaces in the
context of nonlinear harmonic analysis and its applications,
cf.
\cite{BenGraGroOko, BenGroOkoRog,
BenOko2,DPT2022,FeiNar, OhWang,OhWang2,Teofanov4,Toft26}
we focus our attention to boundedness for multiplications and
convolutions
for elements in such spaces. The basic results in that direction go
back to the original contribution \cite{Fei1}, and were thereafter
reconsidered by many authors in different contexts. Let us give a
brief, and unavoidably incomplete account on
the related results.

\par

In Section \ref{sec2} we formulate in
Theorems \ref{Thm:MultMod1} and \ref{Thm:ConvMod1}
bilinear versions of more general multiplication
and convolution results in \cite[Section 3]{Toft26}.
The contents of Theorems \ref{Thm:MultMod1} and
\ref{Thm:ConvMod1} in the unweighted case for
modulation spaces $M^{p,q}$ can be summarized as follows.

\par

\begin{prop}\label{Prop:IntroMultConvMod}
Let $p_j,q_j\in (0,\infty ]$, $j=0,1,2$,
$$
\theta _1=\max \left (1 ,\frac 1{p_0},\frac 1{q_1},\frac 1{q_2} \right )
\quad \text{and}\quad
\theta _2=\max \left (1 ,\frac 1{p_1},\frac 1{p_2} \right ).
$$
Then
\begin{alignat*}{3}
M^{p_1,q_1}\cdot M^{p_2,q_2} &\subseteq M^{p_0,q_0},&
\qquad
\frac 1{p_1}+\frac 1{p_2} &=\frac 1{p_0}, &
\quad
\frac 1{q_1}+\frac 1{q_2} &= \theta _1 + \frac 1{q_0},
\\[1ex]
M^{p_1,q_1}* M^{p_2,q_2} &\subseteq M^{p_0,q_0}, &
\qquad
\frac 1{p_1}+\frac 1{p_2} &= \theta _2+\frac 1{p_0}, &
\quad
\frac 1{q_1}+\frac 1{q_2} &= \frac 1{q_0}.
\end{alignat*}
\end{prop}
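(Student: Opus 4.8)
The plan is to move to the time-frequency side and reduce both inclusions to H\"older and Young inequalities. By the window-independent characterization recalled above, $f\in M^{p,q}$ precisely when $V_\phi f$ lies in the mixed-norm space $L^{p,q}$ (Lebesgue exponent $p$ in the space variable, $q$ in the frequency variable) for one and hence every nonzero Gelfand--Shilov window $\phi$. The computations underlying the two statements are the elementary STFT identities
\[
V_{\phi_1\phi_2}(f\cdot g)(x,\xi)=\big(V_{\phi_1}f(x,\cdot)*V_{\phi_2}g(x,\cdot)\big)(\xi)
\]
and
\[
V_{\phi_1*\phi_2}(f*g)(x,\xi)=\big(V_{\phi_1}f(\cdot,\xi)*V_{\phi_2}g(\cdot,\xi)\big)(x),
\]
in which $\phi_1\phi_2$ and $\phi_1*\phi_2$ are again admissible windows (as they are when the $\phi_j$ are Gaussians). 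Thus multiplication puts a pointwise product on the space variable and a convolution on the frequency variable, and convolution does the reverse. For quasi-Banach data one may take the relevant identity as the definition of the product (resp.\ convolution), check that it produces a well-defined ultradistribution, and prove the estimate; alternatively one argues for $f,g$ in a dense test-function space and extends by continuity.

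In the Banach range, so that $\theta_1=\theta_2=1$, the bookkeeping is immediate. For multiplication: commute the frequency convolution past the space (quasi-)norm by Minkowski's inequality (licit here, all exponents being $\ge1$), bound the remaining pointwise product in the space variable by H\"older with $1/p_0=1/p_1+1/p_2$, and bound the frequency convolution by Young with $1/q_1+1/q_2=1+1/q_0$. For convolution one argues symmetrically with the second identity: the convolution now sits in the space variable and is estimated directly by Young ($1/p_1+1/p_2=1+1/p_0$), while the pointwise product in the frequency variable is estimated by H\"older ($1/q_0=1/q_1+1/q_2$, valid for all exponents).

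The substantive part is the quasi-Banach range, where $L^r$-convolution on $\mathbf R^n$ genuinely fails for $r<1$, so one cannot argue directly in the continuous model. The clean route is to discretize: a Gabor frame with a Gelfand--Shilov window exhibits $M^{p,q}$ as a retract of a mixed sequence space over the time-frequency lattice, and the two STFT identities descend to the coefficients as a discrete convolution in the frequency (resp.\ space) lattice index together with a pointwise product in the complementary index. The whole matter then reduces to H\"older and Young inequalities for mixed sequence spaces, where discrete H\"older holds without restriction on the exponents and discrete Young is assembled from the classical one (exponents $\ge1$), the embeddings $\ell^r\hookrightarrow\ell^s$ for $r\le s$, and the bound $\ell^r*\ell^r\subseteq\ell^r$ valid for $r\le1$. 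In the multiplication case one must moreover commute the frequency-convolution sum past the inner $\ell^{p_0}$ quasi-norm, which costs an $\ell^{\min(1,p_0)}$-factor; optimizing the exponents of these operations against their validity constraints leads to a relation of the shape $1/q_1+1/q_2=1/q_0+\max(1,1/p_0,1/q_0,1/q_1,1/q_2)$, and since $1/q_0=1/q_1+1/q_2-\theta_1$ is automatically dominated by the remaining terms, what survives is exactly $\theta_1$. The convolution case is parallel but lighter — the space convolution already sits in the innermost norm, so there is no quasi-triangle commutation and hence no $q$- or $p_0$-contribution — and it produces $\theta_2=\max(1,1/p_1,1/p_2)$. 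The delicate point, and the one I would monitor most carefully, is keeping the quasi-triangle constants under control uniformly — across the space variable, across the frequency variable, and through the frame analysis and synthesis operators — and confirming that the exponents produced are genuinely the thresholds $\theta_1,\theta_2$ rather than something coarser. I note finally that the Proposition is the unweighted, bilinear specialization of Theorems \ref{Thm:MultMod1} and \ref{Thm:ConvMod1}, so in the paper it suffices to invoke those; the outline above is how one would establish them from scratch.
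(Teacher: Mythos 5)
Your overall strategy is the same as the paper's: the two STFT identities for products and convolutions, Minkowski/H{\"o}lder/Young directly in the continuous mixed norms when all exponents are at least $1$, and a discretization reducing the quasi-Banach range to H{\"o}lder and Young inequalities on mixed sequence spaces; your accounting of where $\theta _1$ and $\theta _2$ come from (a convolution sum has to be commuted past an inner quasi-norm in the multiplication case, but not in the convolution case) is also the correct mechanism, and your observation that the term $1/q_0$ is absorbed into $\theta _1$ is right. The step that does not survive as written is the claim that the STFT identities ``descend to the coefficients as a discrete convolution in the frequency (resp.\ space) lattice index together with a pointwise product in the complementary index.'' Sampling a continuous convolution on a lattice is not the discrete convolution of the samples: e.g.\ $V_{\phi _0}(f_1*f_2)(j,\iota )=\bigl (V_{\phi _1}f_1(\cdo ,\iota )*V_{\phi _2}f_2(\cdo ,\iota )\bigr )(j)$ is a convolution integral over $\rr d$ evaluated at $j$, and it is not expressible through the Gabor coefficients $V_{\phi _1}f_1(k,\iota )$, $V_{\phi _2}f_2(k',\iota )$ alone. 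The bridge the paper uses (following \cite{GaSa,BaCoNi}) is the Wiener amalgam step: dominate $|V_{\phi _j}f_j|$ on each cube $(k,\kappa )+Q_{2d,1}$ by its local supremum $a_j(k,\kappa )$, use $0\le \chi _{k_1+Q}*\chi _{k_2+Q}\le \chi _{k_1+k_2+Q_{d,2}}$ to bound the local suprema of the continuous convolution by finitely many translates of the discrete convolution $a_1(\cdo ,\iota )*a_2(\cdo ,\iota )$, and then return to modulation norms via $\nm f{M^{p,q}}\asymp \nm {V_\phi f}{\sfW (\ell ^{p,q})}$ together with the boundedness of $P_\phi$ and $V_\phi ^*$ on these amalgam spaces (Proposition \ref{Prop:ProjMapModCont}), which is also what makes $f_1*f_2=c\, V_{\phi _0}^*F_0$ well defined before the density extension. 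Without this local-sup domination your discrete Young inequality has nothing to act on, so this is a genuine, though repairable by standard means, gap; the rest of your exponent bookkeeping then goes through exactly as in the paper's proof of Theorem \ref{Thm:ConvMod1}.

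A smaller organizational difference: the paper does not run the discretized argument twice. It observes that $\mascF$ maps $M^{p,q}$ onto $W^{q,p}$, so the multiplication theorems are Fourier transforms of the convolution theorems, and only the convolution case needs the amalgam argument (in the survey, only Theorem \ref{Thm:ConvMod1} is proved, in the unweighted case). Your plan of proving the multiplication case directly also works, but it duplicates the argument with the roles of the two variables, and hence the position of the quasi-norm commutation that produces the $1/p_0$ in $\theta _1$, interchanged.
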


\par

The general multiplication and convolution properties in Section \ref{sec2} also overlap with
results by Bastianoni, Cordero and Nicola in \cite{BaCoNi},
by Bastianoni and Teofanov in \cite{BaTe},
and by Guo, Chen, Fan and Zhao in \cite{GuChFaZh}.

\par

The multiplication
relation in Proposition \ref{Prop:IntroMultConvMod} for $p_j,q_j\ge 1$
was obtained already in \cite{Fei1} by Feichtinger. It is also obvious that
the convolution relation was well-known since then
(though a first formal proof of this relation seems to be given  in
\cite{Toft3}). In general, these convolution and multiplication
properties follow the rules
\begin{alignat*}{4}
\ell ^{p_1}&*\ell ^{p_2}\subseteq \ell ^{p_0}, & \quad
\ell ^{q_1} &\cdot \ell ^{q_2}\subseteq \ell ^{q_0} &
\quad &\Rightarrow & \quad
M^{p_1,q_1}&*M^{p_2,q_2}\subseteq M^{p_0,q_0}
\intertext{and}
\ell ^{p_1} &\cdot \ell ^{p_2}\subseteq \ell ^{p_0}, & \quad
\ell ^{q_1} &* \ell ^{q_2}\subseteq \ell ^{q_0} &
\quad &\Rightarrow & \quad
M^{p_1,q_1} &\cdot M^{p_2,q_2}\subseteq M^{p_0,q_0},
\end{alignat*}
which goes back to \cite{Fei1} in the Banach space case and to
\cite{GaSa} in the quasi-Banach case. See also \cite{FeiGro1}
and \cite{Rau2}
for extensions of these relations to more general Banach function
spaces and quasi-Banach function spaces, respectively.

\par

In Section \ref{sec2} we basically review some results from \cite{Toft26}.
To make this survey self-contained we give the proof of  Theorem \ref{Thm:ConvMod1}
in unweighted case. In contrast to \cite{GuChFaZh}, we do not deduce any sharpness for our results.

\par

To show Proposition \ref{Prop:IntroMultConvMod} in the quasi-Banach setting,
apart from the usual use of  of H{\"o}lder's and
Young's inequalities, additional
arguments are needed. In our situation we
discretize the situations in similar ways as in \cite{BaCoNi} by using
Gabor analysis for modulation spaces, and then apply some
further arguments, valid in non-convex analysis. This
approach is slightly different compared to what is used in
\cite{GuChFaZh} which follows the discretization technique
introduced in \cite{WaHu}, and which has some traces of Gabor analysis.

\par

We refer to \cite{Toft26} for a detailed discussion on the uniqueness of multiplications
and convolutions in Proposition \ref{Prop:IntroMultConvMod}.

\par

In Section \ref{Sec:3} we apply  the results from previous parts in the
framework of the so called Gabor product. It is introduced in \cite{DPT2022}
in order to derive a phase space analogue to the usual convolution identity for the Fourier transform.
The main motivation is to  use such kind of products in a phase-space formulation of certain nonlinear equations.
As noticed in \cite{DPT2022}, among other interesting characteristics of
phase-space representations, the initial value problem in phase-space may be well-posed for more general initial distributions. This means that the phase-space formulation could contain solutions other than the standard ones. We refer to
\cite{Dias1,Dias2,Dias3}, where the phase-space extensions are explored in different contexts.
Here we illustrate this approach by considering the nonlinear cubic Schr\"odinger equation, which appear for example in
in Bose-Einstein condensate theory \cite{Kevrekidis}.
We also  refer to \cite[Chapter 7]{BenOko2} for an overview of results related to well-posedness of the  nonlinear  Schr\"odinger equations in the framework of modulation spaces, see also \cite{BenOko1, OhWang, OhWang2}.

\par

\section*{Acknowledgement}

\par

The work of N. Teofanov is partially supported  by TIFREFUS
Project DS 15, and MPNTR of Serbia Grant
No. 451--03--68/2022--14/200125.
Joachim Toft was supported by Vetenskapsr{\aa}det
(Swedish Science Council) within the project 2019-04890.

\par

\section{Preliminaries}\label{sec1}

\par

In this section we give an exposition of
background material related to the definition and basic properties of modulation spaces.
Thus we recall some facts on the short-time Fourier transform and related projections,
the (Fourier invariant) Gelfand-Shilov spaces,
weight functions, and mixed-norm spaces of Lebesgue type. We also recall
convolution and multiplication in weighted Lebesgue sequence spaces.

\par

\subsection{The short-time Fourier transform}\label{subsec1.1}

\par

In what follows we let $\mathscr F$ be the
Fourier transform which takes the form
$$
(\mathscr Ff)(\xi )= \widehat f(\xi ) \equiv (2\pi )^{-\frac
d2}\int _{\rr
{d}} f(x)e^{-i\scal  x\xi }\, dx
$$
when $f\in L^1(\rr d)$. Here $\scal \cdo \cdo$ denotes the usual
scalar product
on $\rr d$. The same notation is used for the usual dual form
between test functions and corresponding
(ultra-)distributions. We recall that map $\mathscr F$ extends
uniquely to a homeomorphism on the space of tempered distributions
$\mascS '(\rr d)$,
to a unitary operator on $L^2(\rr d)$ and restricts
to  a homeomorphism on the Schwartz space of smooth rapidly
decreasing functions $\mathscr S(\rr d)$, cf.
\eqref{Eq:SchwartzSpDef}.
We also observe with our choice of the Fourier
transform, the usual
convolution identity for the Fourier transform
takes the forms
\begin{equation}\label{Eq:FourTransfConv}
\mascF (f\cdot g)
=
(2\pi )^{-\frac d2}\widehat f *\widehat g
\quad \text{and}\quad
\mascF (f* g)
=
(2\pi )^{\frac d2}\widehat f \cdot \widehat g
\end{equation}
when $f,g\in \mascS (\rr d)$.

\par

In several situations it is convenient to use a localized version
of the Fourier transform, called the short-time Fourier transform, STFT for short.
The short-time Fourier transform of
$f\in \mascS '(\rr d)$ with respect to the fixed \emph{window function}
$\phi \in \mascS (\rr d)$ is defined by
\begin{align}
(V_\phi f)(x,\xi ) &\equiv (2\pi )^{-\frac d2}
(f,\phi (\cdo -x)e^{i\scal \cdo \xi})_{L^2}.
\label{Eq:STFTDef}
\intertext{Here $(\cdo ,\cdo )_{L^2}$ denotes the unique continuous
extension of the inner product on $L^2(\rr d)$ restricted to
$\mascS (\rr d)$ into a continuous map from $\mascS '(\rr d)
\times \mascS (\rr d)$ to $\mathbf C$.
\newline
\indent
We observe that using certain properties for tensor products
of distributions, }
(V_\phi f)(x,\xi ) &= \mascF (f\cdot \overline {\phi (\cdo -x)})(\xi ).
\tag*{(\ref{Eq:STFTDef})$'$}
\intertext{(cf. \cite{Ho1,Toft22}). If in addition
$f\in L^p(\rr d)$
for some $p\in [1,\infty ]$, then}
(V_\phi f)(x,\xi ) &= (2\pi )^{-\frac d2}\int _{\rr d}f(y)
\overline {\phi (y-x)}e^{-i\scal y\xi}\, dy.
\tag*{(\ref{Eq:STFTDef})$''$}
\end{align}
We observe that the domain of $V_\phi$
is $\mascS '(\rr d)$.
The images are contained
in $C^\infty (\rr {2d})$, the set of smooth functions
defined on the phase space $\rr d\times \rr d\simeq \rr {2d}$.

\par

The short-time Fourier transform appears in different contexts
and under different names. In quantum mechanics it is rather common
to it the \emph{coherent state transform}
(see e.{\,}g. \cite{LieSol}). It is also closely related to the
so-called Wigner distribution or radar ambiguity function (see e.{\,}g.
\cite{Lieb1}). In time-frequency analysis, it is also sometimes called a
\emph{Voice transform}.

\par

The main idea with the design of short-time Fourier transform is
to get the Fourier content, or the frequency resolution of localized
functions and distributions. Roughly speaking, short-time
Fourier transform give a simultaneous information both on
functions or distributions themselves as well as their
Fourier transforms in the sense that the map
$$
x\mapsto V_\phi f(x,\xi )
$$
resembles on $f(x)$, while the map
$$
\xi \mapsto V_\phi f(x,\xi )
$$
resembles on $\widehat f(\xi )$.

\par

As for the ordinary Fourier transform, there are several mapping
properties which hold true for the short-time Fourier transform.
An elegant way to approach such mapping in the framework of
distributions, we may follow ideas given in \cite{Fol} by Folland.

\par

In fact, let $T$ is the semi-conjugated tensor map
\begin{align}
T(f,\phi ) &= f\otimes \overline \phi ,
\intertext{$U$ be the linear pullback} 
(UF)(x,y) &= U(y,y-x)
\intertext{and $\mascF _2$ be the partial Fourier transform given by}
(\mascF _2F)(x,\xi ) &= (2\pi )^{-\frac d2}\int _{\rr d}F(x,y)e^{-i\scal y\xi}\, dy .
\end{align}
Then
\begin{equation}\label{Eq:STFTDecomp}
V_\phi f = (\mascF _2 \circ U\circ T)(f,\phi ) ,
\end{equation}
when $f,\phi \in \mascS (\rr d)$.

\par

We observe that the mappings
\begin{alignat}{2}
T : \mascS (\rr d) \times \mascS (\rr d) &\to \mascS (\rr {2d}), &
\quad
U,\mascF _2 : \mascS (\rr {2d}) &\to \mascS (\rr {2d})
\intertext{are continuous and uniquely extendable to continuous
mappings}
T : \mascS '(\rr d) \times \mascS '(\rr d) &\to \mascS '(\rr {2d}), &
\quad
U,\mascF _2 : \mascS '(\rr {2d}) &\to \mascS '(\rr {2d}),
\intertext{which in turn restricts to isometric mappings}
T : L^2(\rr d) \times L^2(\rr d) &\to L^2(\rr {2d}), &
\quad
U,\mascF _2 : L^2(\rr {2d}) &\to L^2(\rr {2d}).
\end{alignat}
Here that $T$ is isometric means that
$$
\nm {T(f,\phi )}{L^2(\rr {2d})} = \nm f{L^2(\rr d)}\nm \phi {L^2(\rr d)}.
$$

\par

It is now natural to define $V_\phi f$ as the right-hand side of
\eqref{Eq:STFTDecomp} when $f,\phi \in \mascS '(\rr d)$, in which
$V_\phi f$ is well-defined as an element in $\mascS '(\rr {2d})$.

\par


\begin{prop}\label{Prop:ExtSTFTSchwartz}
The map
\begin{alignat}{4}
(f,\phi ) &\mapsto V_\phi f &  &:\,  &
&\mascS (\rr d) \times \mascS (\rr d) & &\to \mascS (\rr {2d})
\label{Eq:STFTSchwartz}
\intertext{is continuous, which extends uniquely to a continuous map}
(f,\phi ) &\mapsto V_\phi f & &:\,  &
&\mascS '(\rr d) \times \mascS '(\rr d) & &\to \mascS '(\rr {2d}),
\label{Eq:STFTTempDist}
\intertext{which in turn restricts to an isometric map}
(f,\phi ) &\mapsto V_\phi f & &:\,  &
&L^2(\rr d) \times L^2(\rr d) & &\to L^2(\rr {2d}).
\label{Eq:STFTL2}
\end{alignat}
\end{prop}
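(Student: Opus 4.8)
The plan is to deduce all three assertions from the factorization \eqref{Eq:STFTDecomp}, $V_\phi f=(\mascF_2\circ U\circ T)(f,\phi)$, by combining it with the mapping properties of $T$, $U$ and $\mascF_2$ recorded immediately above the statement, so that continuity, the existence of the extension, and the isometry each reduce to composing maps already known to behave well on the relevant scale of spaces. For \eqref{Eq:STFTSchwartz} I would first observe that the semi-conjugate tensor map $T$ is bilinear and continuous from $\mascS(\rr d)\times\mascS(\rr d)$ to $\mascS(\rr{2d})$ — complex conjugation is continuous on $\mascS(\rr d)$, one has $f\otimes\psi\in\mascS(\rr{2d})$ whenever $f,\psi\in\mascS(\rr d)$, and every Schwartz seminorm of $f\otimes\psi$ is dominated by a product of Schwartz seminorms of $f$ and $\psi$ — while the pullback $U$, induced by an invertible linear change of the phase-space variables, and the partial Fourier transform $\mascF_2$ are continuous linear operators on $\mascS(\rr{2d})$. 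Composing, $\mascF_2\circ U\circ T$ is continuous, and by \eqref{Eq:STFTDecomp} it coincides with $(f,\phi)\mapsto V_\phi f$ on $\mascS(\rr d)\times\mascS(\rr d)$.

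For \eqref{Eq:STFTTempDist} I would invoke the extensions quoted above: $T$ maps $\mascS'(\rr d)\times\mascS'(\rr d)$ continuously into $\mascS'(\rr{2d})$ and $U,\mascF_2$ map $\mascS'(\rr{2d})$ continuously into itself, so $\mascF_2\circ U\circ T$ is a continuous map $\mascS'(\rr d)\times\mascS'(\rr d)\to\mascS'(\rr{2d})$, which is exactly the assignment $(f,\phi)\mapsto V_\phi f$ already introduced through the right-hand side of \eqref{Eq:STFTDecomp}. That it is the \emph{unique} continuous extension of \eqref{Eq:STFTSchwartz} will follow from density of $\mascS(\rr d)$ in $\mascS'(\rr d)$, hence of $\mascS(\rr d)\times\mascS(\rr d)$ in $\mascS'(\rr d)\times\mascS'(\rr d)$, together with the Hausdorff property of $\mascS'(\rr{2d})$, since two continuous maps agreeing on a dense subset agree everywhere.

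For \eqref{Eq:STFTL2} I would restrict the three factors to $L^2$: $T$ satisfies $\nm{T(f,\phi)}{L^2(\rr{2d})}=\nm f{L^2(\rr d)}\nm\phi{L^2(\rr d)}$ as already noted, $U$ is unitary on $L^2(\rr{2d})$ because it comes from a volume-preserving invertible substitution, and $\mascF_2$ is unitary on $L^2(\rr{2d})$ by Plancherel's theorem in the second group of variables together with Fubini's theorem. Composing these, $\nm{V_\phi f}{L^2(\rr{2d})}=\nm f{L^2(\rr d)}\nm\phi{L^2(\rr d)}$, which is the claimed isometry.

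The step I expect to require the most care is the passage through $\mascS'(\rr d)\times\mascS'(\rr d)$: one has to fix the topology used on the dual spaces and keep track of the distinction between separate and joint continuity of the bilinear map. Moreover, were the quoted mapping properties of $T$ not taken for granted, the genuinely substantial point hidden here would be the continuous extension of the tensor map to tempered distributions, which ultimately rests on the nuclearity of $\mascS(\rr d)$ (equivalently, on Schwartz's kernel theorem); the remainder of the argument is a formal composition of maps whose properties have been established beforehand.
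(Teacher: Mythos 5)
Your argument is correct and is essentially the paper's own: the paper also obtains Proposition \ref{Prop:ExtSTFTSchwartz} directly from the factorization \eqref{Eq:STFTDecomp}, $V_\phi f=(\mascF _2\circ U\circ T)(f,\phi )$, using the stated continuity of $T$, $U$, $\mascF _2$ on $\mascS$, their unique continuous extensions to $\mascS '$, and their isometry on $L^2$ (with isometry of the bilinear map understood, as for $T$, in the sense $\nm {V_\phi f}{L^2(\rr {2d})}=\nm f{L^2(\rr d)}\nm \phi {L^2(\rr d)}$). Your added remarks on density, Hausdorffness and the tensor-map extension only make explicit what the paper leaves implicit.
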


\par

%

If $\phi \in \mascS (\rr d)$ and $f\in \mascS '(\rr d)$, then
\eqref{Eq:STFTTempDist} shows that $V_\phi f\in \mascS '(\rr {2d})$.
On the other hand, it is easy to see that the right-hand side
of \eqref{Eq:STFTDef} defines a smooth function. Consequently
beside \eqref{Eq:STFTTempDist} and \eqref{Eq:STFTSchwartz},
we also have the continuous map
\begin{equation}\label{Eq:STFTTempDistSchwartz}
(f,\phi ) \mapsto V_\phi f   :\,
\mascS '(\rr d) \times \mascS (\rr d) \to \mascS '(\rr {2d})
\cap C^\infty (\rr {2d}).
\end{equation}

\par

For short-time Fourier transform, the Parseval identity
is replaced by the so-called Moyal identity, also known as the {\em orthogonality relation}
given by
\begin{equation}\label{Eq:Moyal}
(V_{\phi}f,V_{\psi}g)_{L^2(\rr {2d})}
= (\psi ,\phi )_{L^2(\rr d)}(f,g)_{L^2(\rr d)},
\end{equation}
when $f,g,\phi ,\psi \in \mascS (\rr d)$. The identity \eqref{Eq:Moyal}
is obtained by rewriting the short-time Fourier transforms by
\eqref{Eq:STFTDef}$'$ and then applying the Parseval identity
in suitable ways. We observe that the right-hand side
makes sense also when $f$, $g$, $\phi$ and $\psi$ belong to
other spaces than $\mascS (\rr d)$. For example we may let
\begin{equation}\label{Eq:STFTScalProdMap}
\begin{aligned}
(f,g,\phi ,\psi ) &\in \mascS '(\rr d)\times \mascS (\rr d)\times \mascS (\rr d)
\times \mascS '(\rr d),
\\[1ex]
(f,g,\phi ,\psi ) &\in \mascS (\rr d)\times \mascS '(\rr d)\times \mascS '(\rr d)
\times \mascS (\rr d),
\\[1ex]
(f,g,\phi ,\psi ) &\in \mascS '(\rr d)\times \mascS (\rr d)\times L^q(\rr d)
\times L^{q'}(\rr d)
\\[1ex]
\text{or}\qquad
(f,g,\phi ,\psi ) &\in L^p(\rr d)\times L^{p'}(\rr d)\times L^q(\rr d)
\times L^{q'}(\rr d),
\end{aligned}
\end{equation}
when $p,p',q,q'\in [1,\infty]$ satisfy
$$
\frac 1p+\frac 1{p'}=\frac 1q+\frac 1{q'}=1.
$$

\par

By Moyal's identity \eqref{Eq:Moyal} it follows that if
$\phi \in \mascS (\rr d)\setminus 0$, then the identity operator
on $\mascS '(\rr d)$ is given by
\begin{equation}\label{Eq:IdentSTFTAdj}
\operatorname{Id}
=
\left (\nm \phi{L^2}^{-2}\right ) \cdot V_\phi ^*\circ V_\phi ,
\end{equation}
provided suitable mapping properties
of the ($L^2$-)adjoint $V_\phi ^*$ of $V_\phi$ can be established.
Obviously, $V_\phi ^*$ fullfils
\begin{equation}\label{Eq:STFTAdjoint}
(V_\phi ^*F,g) _{L^2(\rr d)} = (F,V_\phi g) _{L^2(\rr {2d})}
\end{equation}
when $F\in \mascS (\rr {2d})$ and $g\in \mascS (\rr d)$.

\par

By expressing the scalar product and the short-time Fourier
transform in terms of integrals in \eqref{Eq:STFTAdjoint},
it follows by straight-forward manipulations that
the adjoint in \eqref{Eq:STFTAdjoint} is given by
\begin{equation}\label{Eq:STFTAdjointFormula}
(V_\phi ^*F)(x) = (2\pi )^{-\frac d2}
\iint _{\rr {2d}} F(y,\eta )\phi (x-y)e^{i\scal x\eta}\, dyd\eta ,
\end{equation}
when $F\in \mascS (\rr {2d})$. We may now use mapping properties
like \eqref{Eq:STFTTempDist}--\eqref{Eq:STFTL2} to extend
the definition of $V_\phi ^*F$ when $F$ and $\phi$ belong to
various classes of function and distribution spaces. For example,
by \eqref{Eq:STFTTempDist},
\eqref{Eq:STFTSchwartz} and \eqref{Eq:STFTL2},
it follows that the map
$$
(F,g)\mapsto (F,V_\phi g) _{L^2(\rr {2d})}
$$
defines a sesqui-linear form on
$\mascS (\rr {2d})\times \mascS '(\rr d)$,
$\mascS '(\rr {2d})\times \mascS (\rr d)$ and on
$L^2(\rr {2d})\times L^2(\rr d)$. This implies that
if $\phi \in \mascS (\rr d)$, then
$V_\phi ^*$ in \eqref{Eq:STFTAdjoint}  is continuous
from $\mascS (\rr {2d})$ to $\mascS (\rr d)$ which is
uniquely extendable to a continuous map
$\mascS '(\rr {2d})$ to $\mascS '(\rr d)$, and to
$L^2(\rr {2d})$ to $L^2(\rr d)$. That is, the mappings
\begin{equation}\label{Eq:STFTAdjCont}
\begin{alignedat}{3}
V_\phi ^* :   \mascS (\rr {2d}) \to \mascS (\rr d),
\qquad
&V_\phi ^* & &: & \mascS '(\rr {2d}) &\to \mascS '(\rr d)
\\[1ex]
\text{and}\qquad
&V_\phi ^* & &: & L^2(\rr {2d}) &\to L^2(\rr d)
\end{alignedat}
\end{equation}
are continuous.

\par

\subsection{STFT projections and a suitable twisted
convolution}\label{subsec1.2}

\par

If $\phi \in \mascS (\rr d)$ satisfies $\nm \phi{L^2}=1$, then
\eqref{Eq:IdentSTFTAdj} shows that $V_\phi ^*\circ V_\phi$
is the identity operator on $\mascS '(\rr d)$. If we swap the
order of this composition we get certain types of projections.
In fact, for any $\phi \in \mascS (\rr d)\setminus 0$, let $P_\phi$
be the operator given by
\begin{equation}\label{Eq:ProjphiDef}
P_\phi \equiv \nm \phi {L^2}^{-2}\cdot V_\phi \circ V_\phi ^*.
\end{equation}
We observe that $P_\phi$ is continuous on $\mascS (\rr {2d})$,
$L^2(\rr {2d})$ and $\mascS '(\rr {2d})$ due to the mapping properties
for $V_\phi$ and $V_\phi ^*$ above.

\par

It is clear that $P_\phi ^*=P_\phi$, i.{\,}e. $P_\phi$ is self-adjoint. Furthermore,
$P_\phi$ is an involution:
$$
P_\phi ^2 = \nm \phi {L^2}^{-2}\cdot
V_\phi \circ
\Big (
\underset{\text{The identity operator}}
{\underbrace{\nm \phi {L^2}^{-2}\cdot V_\phi ^*\circ V_\phi}}
\Big )
\circ V_\phi ^*
=
\nm \phi {L^2}^{-2}\cdot V_\phi \circ V_\phi ^* =P_\phi .
$$
Hence,
\begin{equation}\label{Eq:ProjphiRule}
P_\phi ^* = P_\phi
\quad \text{and}\quad P_\phi ^2=P_\phi ,
\end{equation}
which shows that $P_\phi$ is an orthonormal projection.

\par

The ranks of $P_\phi$ are given by
\begin{equation}\label{Eq:STFTProjMaps}
\begin{aligned}
P_\phi (\mascS (\rr {2d})) = V_\phi (\mascS (\rr d)),
\qquad
P_\phi (L^2 (\rr {2d})) &= V_\phi (L^2 (\rr d)),
\\[1ex]
\text{and}\qquad
P_\phi (\mascS '(\rr {2d})) &= V_\phi (\mascS '(\rr d)).
\end{aligned}
\end{equation}
In fact, if $F\in \mascS '(\rr {2d})$, then
$$
P_\phi F = V_\phi f,
$$
where $f=\nm {\phi}{L^2}^{-2}V_\phi ^*F\in \mascS '(\rr d)$. This shows that
$P_\phi (\mascS '(\rr {2d})) \subseteq V_\phi (\mascS '(\rr d))$.
On the other hand, if $f\in \mascS '(\rr d)$ and $F=V_\phi f$, then
$$
P_\phi F =   \Big (V_\phi \circ \Big ( \nm \phi {L^2}^{-2} \cdot
V_\phi ^* \circ V_\phi \Big ) \Big )f = V_\phi f,
$$
which shows that any element in $V_\phi (\mascS '(\rr d))$
equals to an element in $P_\phi (\mascS '(\rr {2d}))$, i.e. $P_\phi (\mascS '(\rr {2d})) = V_\phi (\mascS '(\rr d))$. This gives
the last identity in \eqref{Eq:STFTProjMaps}. In the same way, the
first two identities are obtained.

\par

\begin{rem}
Let $F\in \mascS'(\rr {2d})$. Then it follows from
the last identity in \eqref{Eq:STFTProjMaps} that
$F=V_{\phi}f$ for some $f\in \mascS'(\rr {d})$, if and only if
\begin{equation}\label{Eq:TwistedProj}
F= P_\phi F.
\end{equation}
Furthermore, if \eqref{Eq:TwistedProj} holds, then $F=V_{\phi}f$ with
\begin{equation}\label{Eq:TwistedProj2}
f=(\nm \phi{L^2}^{-2})\cdot V_\phi ^*F .
\end{equation}
%
%
%
%
\end{rem}

\par

There is a twisted convolution which is linked to
the projection in \eqref{Eq:ProjphiDef}. In fact, if $F\in \mascS (\rr {2d})$
and $\phi \in \mascS (\rr d)\setminus 0$, then it follows
by expanding the integrals for $V_\phi$ and $V_\phi ^*$ in
\eqref{Eq:ProjphiDef}, and performing some straight-forward
manipulations that
\begin{equation}\label{Eq:ProjOpTwistedConv}
P_\phi F = \nm \phi {L^2}^{-2}\cdot V_\phi \phi *_V F ,
\qquad F\in \mascS '(\rr {2d}),
\end{equation}
where the
\emph{twisted convolution} $*_V$ is defined by
\begin{align}
(F*_VG)(x,\xi )
&=
(2\pi )^{-\frac d2}
\iint _{\rr {2d}} \! \!
F(x-y,\xi -\eta )G(y,\eta )e^{-i\scal{y}{\xi -\eta}}\, dyd\eta .
\notag
\\[1ex]
&=
(2\pi )^{-\frac d2}
\iint _{\rr {2d}} \! \!
F(y,\eta )G(x-y,\xi - \eta )e^{-i\scal{x-y}\eta}\, dyd\eta ,
\label{Eq:TwistConvDef}
\end{align}
when $F,G\in \mascS (\rr {2d})$. We observe that the definition of
$*_V$ is uniquely extendable in different ways. For example, Young's
inequality for ordinary convolution also holds for the twisted
convolution. Moreover, the map $(F,G)\mapsto F*_VG$
extends uniquely to continuous mappings from $\mascS (\rr {2d}) \times
\mascS '(\rr {2d})$ or $\mascS '(\rr {2d})\times \mascS (\rr {2d})$
to $\mascS '(\rr {2d})$. By straight-forward computations
it follows that
\begin{equation}\label{Eq:TwistedConvAsoc}
(F*_VG)*_VH = F*_V(G*_VH),
\end{equation}
when $F,H\in \mascS (\rr {2d})$ and $G\in \mascS '(\rr {2d})$,
or $F,H\in \mascS '(\rr {2d})$ and $G\in \mascS (\rr {2d})$

\par

Let $f\in \mascS'(\rr d)$ and $\phi _j\in \mascS(\rr d)$,
$j=1,2,3$.
By straight-forward applications of Parseval's formula it follows
that
\begin{equation}\label{Eq:STFTWindTrans}
\big ( (V_{\phi _2}\phi _3) *_V(V_{\phi _1}f)  \big ) (x,\xi )
=
(\phi _3,\phi _1)_{L^2} \cdot (V_{\phi _2}f)(x,\xi ),
\end{equation}
which is some sort of reproducing kernel
of short-time Fourier transforms in the background of
$*_V$.

\par

\subsection{Gelfand-Shilov spaces}\label{subsec1.2}

\par

Before defining the Gelfand-Shilov spaces, we recall that
the Schwartz space $\mascS (\rr d)$ consists of all
(complex-valued) smooth functions $f\in C^\infty (\rr d)$
such that
\begin{equation}\label{Eq:SchwartzSpDef}
\sup _{x\in \rr d}\big ( |x^\beta \partial ^\alpha
f(x)| \big ) \le C_{\alpha ,\beta},
\end{equation}
for some constants $C_{\alpha ,\beta}>0$, which only depend on
the multi-indices $\alpha ,\beta \in \nn d$. The Schwartz space
possess several convenient properties. and is heavily used in
mathematics, science and technology. For example, the Schwartz
space is invariant under Fourier transformation. By duality the
same holds true for its ($L^2$-)dual $\mascS '(\rr d)$, the set of
tempered distributions on $\rr d$.

\par

On the other hand, we observe that there are no conditions on the growths
of the constants $C_{\alpha ,\beta}$ with respect to
$\alpha ,\beta \in \nn d$. This implies that in the context of the spaces
$\mascS (\rr d)$ and $\mascS '(\rr d)$, it is almost impossible to investigate
important properties like analyticity or related regularity properties
which are stronger than pure smoothness. In order for investigating
such stronger regularity properties, we need to modify $\mascS (\rr d)$
and the estimate \eqref{Eq:SchwartzSpDef} by imposing  suitable growth conditions on the
constants $C_{\alpha ,\beta}$. This leads to the definition of
Gelfand-Shilov spaces, \cite{GeSh, Pil1}.

\par

We only discuss Fourier invariant
Gelfand-Shilov spaces and their properties.
Let $0<s\in \mathbf R$ be fixed. We have two different
types of Gelfand-Shilov spaces. The Gelfand-Shilov
space $\mathcal S_{s}(\rr d)$
of Roumieu type with parameter $s$
consists of all $f\in C^\infty (\rr d)$ such that
\begin{equation}\label{Eq:GeShSpDef}
\sup _{x\in \rr d}\big ( |x^\beta \partial ^\alpha
f(x)| \big ) \le Ch^{|\alpha  + \beta |}(\alpha ! \beta !)^s,
\end{equation}
for some constants $C,h>0$. In the same way, the Gelfand-Shilov
space $\Sigma _{s}(\rr d)$ of Beurling type with parameter $s$
consists of all $f\in C^\infty (\rr d)$ such that for every $h>0$, there
is a constant $C=C_h>0$ such that \eqref{Eq:GeShSpDef}. Hence,
in comparison with the definition of Schwartz functions, we have limited
ourself to  constants $C_{\alpha ,\beta}$ in
\eqref{Eq:SchwartzSpDef} which are not allowed to grow faster than those
of the form
$$
Ch^{|\alpha  + \beta |}(\alpha ! \beta !)^s
$$
when dealing with Gelfand-Shilov spaces.

\par


It can be proved that $\maclS _s(\rr d)$ and $\Sigma _t(\rr d)$
are dense in $\mascS (\rr d)$ when $s\ge \frac 12$ and $t>\frac 12$. We call such $s$ and $t$ admissible.
On the other hand, for the other choices of $s$ and $t$
we have
$$
\maclS _s(\rr d)=\Sigma _t(\rr d)=\{0\} ,
\quad \text{when}\quad
s<\frac 12 ,\ t\le \frac 12.
$$

\par

One has that $\maclS _1(\rr d)$ consists of real analytic functions,
and that $\Sigma _1(\rr d)$ consists of smooth functions on
$\rr d$ which are extendable to entire functions on $\cc d$.
The topologies of  $\mathcal S_{s}(\rr d)$ and  $\Sigma _{s}(\rr d)$
are defined by the semi-norms
\begin{equation}\label{gfseminorm}
\nm f{\mathcal S_{s,h}}\equiv \sup \frac {|x^\beta \partial ^\alpha
f(x)|}{h^{|\alpha  + \beta |}(\alpha ! \beta !)^s}.
\end{equation}
Here the supremum should be taken
over all $\alpha ,\beta \in \mathbf N^d$ and $x\in \rr d$. We equip
$\mathcal S_{s}(\rr d)$ and $\Sigma _{s}(\rr d)$ by the canonical inductive limit
topology and projective limit topology, respectively, with respect to $h>0$, which are
induced by the semi-norms in \eqref{gfseminorm}.

\medspace

Let $\mathcal S_{s,h}(\rr d)$ be the Banach space which
consists of all $f\in C^\infty (\rr d)$ such that $\nm f{\mathcal S_{s,h}}$
in \eqref{gfseminorm} is finite, and let $\mathcal S_{s,h}'(\rr d)$ be
the ($L^2$-)dual of $\mathcal S_{s,h}(\rr d)$. If $s\ge \frac 12$, then
the \emph{Gelfand-Shilov distribution space} $\mathcal S_{s}'(\rr d)$
of \emph{Roumieu type} is the projective limit of
$\mathcal S_{s,h}'(\rr d)$ with respect to $h>0$. If instead
$s> \frac 12$, then
the \emph{Gelfand-Shilov distribution space} $\Sigma _{s}'(\rr d)$
of \emph{Beurling type} is the inductive limit of
$\mathcal S_{s,h}'(\rr d)$ with respect to $h>0$. Consequently,
for admissible $s$ we have
$$
\mathcal S_{s}'(\rr d)
=
\bigcap _{h>0} \mathcal S_{s,h}'(\rr d)
\quad \text{and}\quad
\Sigma _{s}'(\rr d)
=
\bigcup _{h>0}  \mathcal S_{s,h}'(\rr d).
$$
It can be proved that $\mathcal S_{s}'(\rr d)$ and $\Sigma _{s}'(\rr d)$
are the (strong) duals to $\mathcal S_{s}(\rr d)$ and $\Sigma _{s}(\rr d)$,
respectively.

\par

We have the following embeddings and density properties for
Gelfand-Shilov and Schwartz spaces we get
\begin{equation}\label{GSembeddings}
\begin{alignedat}{3}
\maclS _{s}(\rr d)
&\hookrightarrow &
\Sigma _{t}(\rr d)
&\hookrightarrow &
\maclS _t(\rr d)
&\hookrightarrow
\mascS (\rr d),
\\[1ex]
\mascS '(\rr d)
&\hookrightarrow &
\maclS _t' (\rr d)
&\hookrightarrow &
\Sigma _{t}'(\rr d)
&\hookrightarrow
\maclS _{s}'(\rr d),
\qquad t>s\ge \frac 12,
\end{alignedat}
\end{equation}
with dense embeddings.
Here $A\hookrightarrow B$ means that
the topological spaces $A$ and $B$ satisfy $A\subseteq B$ with
continuous embeddings.

\medspace

The Fourier transform possess convenient mapping properties
on Gelfand-Shilov spaces and their distribution spaces.
In fact, the Fourier transform extends
uniquely to homeomorphisms on $\mascS '(\rr d)$,
$\maclS _s'(\rr d)$ and on $\Sigma _s'(\rr d)$ for admissible $s$.
Furthermore,  $\mascF$ restricts to homeomorphisms on
$\maclS _s(\rr d)$ and on $\Sigma _s(\rr d)$.

 \par

One of the most important characterizations of
Gelfand-Shilov spaces is performed in terms of
estimates of the functions and their Fourier
 transforms. More precisely, in \cite{ChuChuKim, Eij} it is proved that
 if $f\in \mascS '(\rr d)$ and $s>0$, then $f\in \maclS _s(\rr d)$
 ($f\in \Sigma _s(\rr d)$), if and only if
 \begin{equation}\label{Eq:GSFtransfChar}
 |f(x)|\lesssim e^{-r|x|^{\frac 1s}}
 \quad \text{and}\quad
 |\widehat f(\xi )|\lesssim e^{-r|\xi |^{\frac 1s}},
 \end{equation}
 for some $r>0$ (for every $r>0$).
 Here $g_1 \lesssim g_2$ means that $g_1(\theta ) \le c \cdot  g_2(\theta )$
 holds uniformly for all $\theta$
 in the intersection of the domains of $g_1$ and $g_2$
 for some constant $c>0$, and we
 write $g_1\asymp g_2$
 when $g_1\lesssim g_2 \lesssim g_1$.

\par

The analysis in \cite{ChuChuKim, Eij} can also be applied on
the Schwartz space, from which it follows that an element
$f\in \mascS '(\rr d)$ belongs to $\mascS (\rr d)$, if and only if
\begin{equation}\label{Eq:SchwartzFtransfChar}
|f(x)|\lesssim \eabs x^{-N}
\quad \text{and}\quad
|\widehat f(\xi )|\lesssim \eabs \xi ^{-N},
\end{equation}
for every $N\ge 0$. Here and in what follows we let
$$
\eabs x = (1+|x|^2)^{\frac 12}.
$$

\par

\begin{rem}\label{Rem:SchwartzToGS}
Several properties in Subsections \ref{subsec1.1} -- \ref{subsec1.2}
in the background of  $\mascS (\rr d)$ and $\mascS '(\rr d)$
also hold for the Gelfand-Shilov spaces and their distribution spaces.
Let $s\ge \frac 12$. By similar arguments which lead to Proposition
\ref{Prop:ExtSTFTSchwartz} and \eqref{Eq:STFTTempDistSchwartz},
it follows that
\begin{alignat}{4}
(f,\phi ) &\mapsto V_\phi f &  &:\,  &
&\maclS _s(\rr d) \times \maclS _s(\rr d) & &\to \maclS _s(\rr {2d})
\label{Eq:STFTGS}
\intertext{is continuous, which extends uniquely to continuous mappings}
(f,\phi ) &\mapsto V_\phi f & &:\,  &
&\maclS _s'(\rr d) \times \maclS _s(\rr d) & &\to
\maclS _s'(\rr {2d})\cap C^\infty (\rr {2d})
\label{Eq:STFTGSDistSoft}
\intertext{and}
(f,\phi ) &\mapsto V_\phi f & &:\,  &
&\maclS _s'(\rr d) \times \maclS _s'(\rr d) & &\to \maclS _s'(\rr {2d}).
\label{Eq:STFTGSDist}
\end{alignat}

\par

It follows that \eqref{Eq:Moyal}
makes sense after each $\mascS$ in
\eqref{Eq:STFTScalProdMap} are replaced
by $\maclS _s$. Let $\phi \in \maclS _s(\rr d)\setminus 0$
be fixed. Then by similar arguments which lead to \eqref{Eq:STFTAdjCont}
give that the mappings
\begin{equation}\tag*{(\ref{Eq:STFTAdjCont})$'$}
V_\phi ^* :   \maclS _s(\rr {2d}) \to \maclS _s(\rr d),
\qquad
V_\phi ^* :  \maclS _s'(\rr {2d}) \to \maclS _s'(\rr d)
\end{equation}
are continuous.
For $P_\phi$ in \eqref{Eq:ProjphiDef} we have that
\eqref{Eq:ProjphiRule} still holds true and that
\eqref{Eq:STFTProjMaps} can be completed with
\begin{equation}\label{Eq:STFTProjMaps2}
P_\phi (\maclS _s(\rr {2d})) = V_\phi (\maclS _s(\rr d))
\quad \text{and}\quad
P_\phi (\maclS _s'(\rr {2d})) = V_\phi (\maclS _s'(\rr d)).
\end{equation}

\par

We also have that the twisted convolution in \eqref{Eq:TwistConvDef}
is continuous from $\maclS _s(\rr {2d})\times \maclS _s(\rr {2d})$
to $\maclS _s(\rr {2d})$ and uniquely extendable to a continuous
map $\maclS _s(\rr {2d})\times \maclS _s'(\rr {2d})$ or
$\maclS _s'(\rr {2d})\times \maclS _s(\rr {2d})$ to
$\maclS _s'(\rr {2d})$, and that the formulae
\eqref{Eq:ProjOpTwistedConv}--\eqref{Eq:STFTWindTrans}
still hold true
after each $\mascS$ is replaced by $\maclS _s$ in the attached
assumptions.

\par

If instead $s>\frac 12$, then similar facts hold true with $\Sigma _s$
in place of $\maclS _s$ above, at each occurrence.
\end{rem}

\par

\begin{rem}\label{Rem:STFTCharGSFuncDist}
In similar ways as characterizing Gelfand-Shilov spaces
in terms of Fourier estimates (see \eqref{Eq:GSFtransfChar}), we may also
use the short-time Fourier transform to perform similar characterizations.
Moreover, the short-time Fourier transform can in addition be used
to characterize spaces of Gelfand-Shilov distributions.

\par

In fact, let $\phi \in \maclS _s(\rr d)\setminus 0$
($\phi \in \Sigma _s(\rr d)\setminus 0$) be fixed and let $f$
be a Gelfand-Shilov distribution on $\rr d$. Then the following is true:
\begin{enumerate}
\item $f\in \maclS _s(\rr d)$ ($f\in \Sigma _s(\rr d)$), if and only if
\begin{equation}\label{Eq:STFTCharGSFunc}
|V_\phi f(x,\xi )|\lesssim e^{-r(|x|^{\frac 1s} +|\xi |^{\frac 1s})}
\end{equation}
for some $r>0$ (for every $r>0$);

\vrum

\item $f\in \maclS _s'(\rr d)$ ($f\in \Sigma _s'(\rr d)$), if and only if
\begin{equation}\label{Eq:STFTCharGSDist}
|V_\phi f(x,\xi )|\lesssim e^{r(|x|^{\frac 1s} +|\xi |^{\frac 1s})}
\end{equation}
for every $r>0$ (for some $r>0$).
\end{enumerate}
We refer to \cite[Theorem 2.7]{GroZim} for the
characterization (1) concerning Gelfand-Shilov functions
and to \cite[Proposition 2.2]{Toft18}) for the characterization (2)
concerning Gelfand-Shilov distributions.
%
\end{rem}

\par

\subsection{Weight functions}\label{subsec1.4}

\par

A \emph{weight} or \emph{weight function} on $\rr d$ is a
positive function $\omega
\in  L^\infty _{loc}(\rr d)$ such that $1/\omega \in  L^\infty _{loc}(\rr d)$.
The weight $\omega$ is called \emph{moderate},
if there is a positive weight $v$ on $\rr d$ and a constant $C\ge 1$ such that
\begin{equation}\label{moderate}
\omega (x+y) \le C\omega (x)v(y),\qquad x,y\in \rr d.
\end{equation}
If $\omega$ and $v$ are weights on $\rr d$ such that
\eqref{moderate} holds, then $\omega$ is also called
\emph{$v$-moderate}.
We note that \eqref{moderate}
implies that $\omega$ fulfills
the estimates
\begin{equation}\label{moderateconseq}
C^{-1}v(-x)^{-1}\le \omega (x)\le C v(x),\quad x\in \rr d.
\end{equation}
We let $\mascP _E(\rr d)$ be the set of all moderate weights on $\rr d$.

\par

We say that $v$ is \emph{submultiplicative} if
\begin{equation}\label{Eq:Submultiplicative}
v(x+y) \le v(x)v(y)
\quad \text{and}\quad
v(-x)=v(x),\qquad x,y\in \rr d.
\end{equation}
We observe that if
$v\in \mascP _E(\rr d)$ is even and satisfies
\begin{equation}\label{Eq:WeakSubmult}
v(x+y)\le Cv(x)v(y), \qquad x,y\in \rr d,
\end{equation}
for some constant $C>0$, then for
$v_0=C^{1/2}v$, one has that
$v_0\in \mascP _E(\rr d)$ is submultiplicative and
$v\asymp v_0$ (see e.{\,}g. \cite{Fei3,FeiGro1,Gro2}).

\par

We also recall from \cite{Gro2.5}
that if $v$ is positive and locally bounded and
satisfies \eqref{Eq:WeakSubmult},
then $v(x)\le C_0e^{r_0|x|}$ for some positive constants $C_0$ and $r_0$.
In fact, if $x\in \rr d$,
$$
r=\sup _{|x|\le 1}\log v(x),\quad c=\log C
$$
and $n$ is an integer such that $n-1\le |x|\le n$, then
\eqref{Eq:WeakSubmult} gives
$$
v(x) = v(n\cdot (x/n))\le C^nv(x/n)^n \le C^ne^{rn} = e^{(r+c)n}
\le e^{(r+c)(|x|+1)},
$$
which gives the statement.

\par

Therefore, if $v$ is  a submultiplicative weight, then
\begin{equation}\label{Eq:ForSubmult}
v(x)\lesssim e^{r|x|},\qquad x\in \rr d,
\end{equation}
for some $r\ge 0$. Hence, if $\omega \in \mascP _E(\rr d)$, then
\eqref{moderate} and \eqref{Eq:ForSubmult} imply
\begin{equation}\label{Eq:weight0}
\omega (x+y) \lesssim \omega (x) e^{r|y|},\quad x,y\in \rr d
\end{equation}
for some $r>0$. In particular, \eqref{moderateconseq} shows that
for any $\omega _0\in \mascP_E(\rr d)$, there is a constant $r>0$ such that
$$
e^{-r|x|}\lesssim \omega _0(x)\lesssim e^{r|x|},\quad x\in \rr d.
$$

\par

If \eqref{moderate} holds, then there is a smallest positive even function
$v_0$ such that \eqref{moderate}
holds with $C=1$. We remark that this $v_0$ is given by
$$
v_0(x) = \sup _{y\in \rr d} \left ( \frac {\omega (x+y)}{\omega (y)},
\frac {\omega (-x+y)}{\omega (y)}  \right ) ,
$$
and is submultiplicative
(see e.{\,}g. \cite{FeiGro1,Gro1,Toft10}). Consequently, if $\omega$
is a moderate weight, then it is also moderated by submultiplicative
weights. In the sequel, $v$ and $v_j$ for $j\ge 0$, always stand for
submultiplicative weights if
nothing else is stated.

\par

We also remark that in the literature it is common to
define submultiplicative weights as \eqref{Eq:Submultiplicative} should hold,
without the condition $v(-x)=v(x)$, i.{\,}e. that $v$ does not have to
be even (cf. e.{\,}g. \cite{Fei3,FeiGro1,GaSa,Gro2}). However, in the sequel
it is convenient for us to include this property in the definition.

\medspace

There are several subclasses of $\mascP _E(\rr d)$ which are interesting
for different reasons. Though our results later on are formulated
in background of $\mascP _E(\rr d)$, we here mention some subclasses
which especially appear in time-frequency analysis.
First we observe the class $\mascP ^{0} _E(\rr d)$, which consists of all
$\omega\in \mascP _E(\rr d)$
such that \eqref{Eq:weight0} holds for every $r>0$.

\par

The class $\mascP _E^0(\rr d)$ is important
when dealing with spectral invariance for
matrix or convolution operators on
$\ell ^2(\zz d)$ (see e.{\,}g. \cite{Gro3}).
If $v\in \mascP _E(\rr d)$ is submultiplicative,
then $v\in \mascP _E^0(\rr d)$, if and only if
\begin{equation}\label{Eq:GRSCond}
\lim _{n\to \infty} v(nx)^{\frac 1n} = 1
\end{equation}
(see e.{\,}g. \cite{FeGaTo}). The
condition \eqref{Eq:GRSCond} is equivalent to
\begin{equation}\tag*{(\ref{Eq:GRSCond})$'$}
\lim _{n\to \infty} \frac {\log (v(nx))}n = 0,
\end{equation}
and is usually called the \emph{GRS condition}, or
\emph{Gelfand-Raikov-Shilov condition}.

\par

A more restrictive condition on $v$ compared to
\eqref{Eq:GRSCond}$'$ is given
by the Beurling-Domar condition
\begin{equation}\label{Eq:BeurlingDomar}
\sum _{n=1}^\infty \frac {\log (v(nx))}{n^2} <\infty .
\end{equation}
This condition is strongly linked to non quasi-analytic
classes which contain non-trivial compactly supported
elements (see e.{\,}g. \cite{Gro2.5}). Any subexponential
submultiplicative weight satisfies the Beurling-Domar condition. That is,
suppose that $\theta \in (0,1)$ and that $v(x)=e^{r|x|^\theta}$,
$x\in \rr d$, then \eqref{Eq:BeurlingDomar} is fulfilled. We
let $\mascP _{\BD}(\rr d)$ be the set of all weights which
are moderated by submultiplicative weights which satisfy
the Beurling-Domar condition.

\par

Finally we let $\mascP (\rr d)$ be the set of all weights on $\rr d$
which are moderated by polynomially bounded functions. That is,
$\omega \in \mascP (\rr d)$, if and only if there are positive
constants $r$ and $C$ such that
$$
\omega (x+y) \le C\omega (x)(1+|y|)^r,\qquad x,y\in \rr d.
$$
Here we observe that $v(x)=(1+|x|)^r$ is submultiplicative.

\par

Among these weight classes we have
\begin{equation}\label{Eq:OrdringWeightClasses}
\mascP (\rr d)\subsetneq \mascP _{\BD}(\rr d)
\subsetneq
\mascP _E^0(\rr d) \subsetneq \mascP _E(\rr d).
\end{equation}
In fact, it is clear that the ordering in \eqref{Eq:OrdringWeightClasses}
holds. On the other hand, if $r>0$ and $\theta \in (0,1)$, then
due to the submultiplicative weights
\begin{equation}\label{Eq:SomeSubmultWeights}
\begin{aligned}
e^{r|x|^\theta } &\in \mascP _{\BD}(\rr d)\setminus \mascP(\rr d),
\\[1ex]
e^{r|x|/\log (e+|x|)}
&\in
\mascP _E^0(\rr d)\setminus \mascP _{\BD}(\rr d),
\\[1ex]
\text{and}\quad
e^{r|x|} &\in \mascP _E(\rr d)\setminus \mascP _E^0(\rr d),
\end{aligned}
\end{equation}
it also follows that the inclusions in \eqref{Eq:OrdringWeightClasses}
are strict.

\par

We refer to \cite{Fei0,Gro2,Gro2.5,Toft10}
for more facts about weights in time-frequency
analysis.

\par

\subsection{Mixed norm spaces of Lebesgue type}\label{subsec1.5}

\par

For every $p,q\in (0,\infty ]$ and weight $\omega$ on $\rr {2d}$, we set
\begin{alignat*}{3}
\nm {F}{L_{(\omega )}^{p,q}(\rr {2d})}
&\equiv
\nm {G_{F,\omega ,p}}{L^q(\rr d)}, &
\quad &\text{where} & \quad
G_{F,\omega ,p}(\xi ) &= \nm {F(\cdo ,\xi )\omega (\cdo ,\xi )}{L^p(\rr d)}
\intertext{and}
\nm {F}{L^{p,q}_{*,(\omega )}(\rr {2d})}
&\equiv
\nm {H_{F,\omega ,q}}{L^p(\rr d)}, &
\quad &\text{where} & \quad
H_{F,\omega ,q}(x) &= \nm {F(x,\cdo )\omega (x,\cdo )}{L^q(\rr d)},
\end{alignat*}
when $F$ is (complex-valued) measurable function on $\rr {2d}$. Then
$L^{p,q}_{(\omega )}(\rr {2d})$ ($L^{p,q}_{*,(\omega )}(\rr {2d})$)
consists of all measurable functions
$F$ such that $\nm F{L_{(\omega )}^{p,q}}<\infty$
($\nm F{L^{p,q}_{*,(\omega )}}<\infty$).

\par

In similar ways, let $\Omega _1,\Omega _2$ be discrete sets,
$\omega $ be a positive function on $\Omega _1\times \Omega _2$ and
$\ell _0'(\Omega _1\times \Omega _2)$ be the set of all formal (complex-valued)
sequences $c=\{ c(j,k)\} _{j\in \Omega _1,k\in \Omega _2}$. Then
the discrete Lebesgue spaces, i.e. the Lebesgue sequence spaces
$$
\ell ^{p,q}_{(\omega )}(\Omega _1\times \Omega _2)
\quad \text{and}\quad
\ell ^{p,q}_{*,(\omega )}(\Omega _1\times \Omega _2)
$$
of mixed (quasi-)norm types consist of all
$c\in \ell _0'(\Omega _1\times \Omega _2)$
such that
$\nm {c}{\ell _{(\omega )}^{p,q}(\Omega _1\times \Omega _2)}<\infty$
respectively
$\nm {c}{\ell _{*,(\omega )}^{p,q}(\Omega _1\times \Omega _2)}<\infty$.
Here
\begin{alignat*}{3}
\nm {c}{\ell _{(\omega )}^{p,q}(\Omega _1\times \Omega _2)}
&\equiv \nm {G_{F,\omega ,p}}{\ell ^q(\Omega _2)}, &
\quad &\text{where} & \quad
G_{c,p}(k) &= \nm {F(\cdo ,k )\omega (\cdo ,k)}{\ell ^p(\Omega _1)}
\intertext{and}
\nm {c}{\ell ^{p,q}_{*,(\omega )}(\Omega _1\times \Omega _2)}
&\equiv \nm {H_{c,\omega ,q}}{\ell ^p(\Omega _1)}, &
\quad &\text{where} & \quad
H_{c,q}(j) &= \nm {c(j,\cdo )\omega (j,\cdo )}{\ell ^q(\Omega _2)},
\end{alignat*}
when $c\in \ell _0'(\Omega _1\times \Omega _2)$.

\par

\par

\par

\subsection{Convolutions and multiplications for
discrete Lebesgue spaces}\label{subsec1.8}

\par

Next we discuss extended H{\"o}lder and Young relations
for multiplications and convolutions on discrete Lebesgue spaces.
The H{\"o}lder and Young conditions on Lebesgue exponent are then
\begin{align}
\frac 1{q_0} &\le \frac 1{q_1}+\frac 1{q_2},
\label{Eq:LebExpHolder}
\intertext{respectively}
\frac 1{p_0} &\le \frac 1{p_1}+\frac 1{p_2}
-
\max \left ( 1,\frac 1{p_1},\frac 1{p_2}\right ).
\label{Eq:LebExpYoung}
\end{align}

Notice that, when $ p_1, p_2 \in (0,1) $, then \eqref{Eq:LebExpYoung}
becomes $ p_0 \geq \max \{ p_1, p_2 \} $, while for $ p_1, p_2 \geq 1 $
it reduces to the common Young condition
$$
1 + \frac 1{p_0} \le \frac 1{p_1}+\frac 1{p_2}.
$$

The conditions on the weight functions are
\begin{alignat}{2}
\omega _0(j) &\le \omega _1(j)\omega _2(j), &
\qquad
j &\in \Lambda ,
\label{Eq:WeightCondHolder}
\intertext{respectively}
\omega _0(j_1+j_2) &\le \omega _1(j_1)\omega _2(j_2), &
\qquad
j_1,j_2 &\in \Lambda ,
\label{Eq:WeightCondYoung}
\end{alignat}
where $\Lambda$ is a lattice of the form
$$
\Lambda = \sets {n_1e_1+\cdots +n_de_d}
{(n_1,\dots ,n_d)\in \zz d},
$$
where $e_1,\dots e_d$ is a basis for $\rr d$.

\par

\begin{prop}\label{Prop:HolderYoungDiscrLebSpaces}
Let $p_j, q_j\in (0,\infty ]$, $j=0,1,2$, be such that
\eqref{Eq:LebExpHolder} and
\eqref{Eq:LebExpYoung} hold, let
$\Lambda \subseteq \rr d$ be a lattice
and let $\omega _j$ be weights on $\Lambda$, $j=0,1,2$.
Then the following is true:
\begin{enumerate}
\item if \eqref{Eq:WeightCondHolder} holds,
then the map $(a_1,a_2)\mapsto a_1\cdot a_2$ from
$\ell _0(\Lambda ) \times \ell _0(\Lambda )$ to $\ell _0(\Lambda )$
extends uniquely to a continuous map from
$\ell ^{q_1}_{(\omega _1)}(\Lambda )
\times
\ell ^{q_2}_{(\omega _2)}(\Lambda )$
to $\ell ^{q_0}_{(\omega _0)}(\Lambda )$,
and
\begin{equation}\label{Eq:HolderEst}
\nm {a_1\cdot a_2}{\ell ^{q_0}_{(\omega _0)}}
\le
\nm {a_1}{\ell ^{q_1}_{(\omega _1)}}
\nm {a_2}{\ell ^{q_2}_{(\omega _2)}},
\qquad a_j\in \ell ^{q_j}_{(\omega _j)}(\Lambda ),\ j=1,2
\text ;
\end{equation}

\vrum

\item  if \eqref{Eq:WeightCondYoung} holds,
then the map $(a_1,a_2)\mapsto a_1*a_2$ from
$\ell _0(\Lambda )\times \ell _0(\Lambda )$ to $\ell _0(\Lambda )$
extends uniquely to a continuous map from
$\ell ^{p_1}_{(\omega _1)}(\Lambda )
\times \ell ^{p_2}_{(\omega _2)}(\Lambda )$ to
$\ell ^{p_0}_{(\omega _0)}(\Lambda )$,
and
\begin{equation}\label{Eq:YoungEst}
\nm {a_1*a_2}{\ell ^{p_0}_{(\omega _0)}}
\le
\nm {a_1}{\ell ^{p_1}_{(\omega _1)}}\nm {a_2}{\ell ^{p_2}_{(\omega _2)}},
\qquad a_j\in \ell ^{p_j}_{(\omega _j)}(\Lambda ),\ j=1 ,2
\text .
\end{equation}
\end{enumerate}
\end{prop}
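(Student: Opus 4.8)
The plan is to reduce both assertions first to the unweighted case and then to a single critical Lebesgue exponent, for which the estimate is either the classical H\"older/Young inequality or follows from the elementary $p$-subadditivity $\bigl(\sum_k t_k\bigr)^p\le\sum_k t_k^p$ valid for $0<p\le1$. The weights disappear pointwise: in (1), \eqref{Eq:WeightCondHolder} gives $|a_1(j)a_2(j)|\omega_0(j)\le\bigl(|a_1(j)|\omega_1(j)\bigr)\bigl(|a_2(j)|\omega_2(j)\bigr)$, reducing \eqref{Eq:HolderEst} to the case of trivial weights; in (2), writing $(a_1*a_2)(j)=\sum_k a_1(j-k)a_2(k)$ and using $\omega_0(j)\le\omega_1(j-k)\omega_2(k)$ for each $k$ from \eqref{Eq:WeightCondYoung} gives $|(a_1*a_2)(j)|\omega_0(j)\le\bigl((|a_1|\omega_1)*(|a_2|\omega_2)\bigr)(j)$, reducing \eqref{Eq:YoungEst} to the unweighted Young inequality on $\ell^p(\Lambda)$. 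Since $\Lambda$ is a lattice, after a linear change of variables we may take $\Lambda=\zz d$.

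For (1), put $1/r=1/q_1+1/q_2$; then $1/q_0\le 1/r$, so $\ell^r(\zz d)\hookrightarrow\ell^{q_0}(\zz d)$ with norm $\le1$, and it is enough to prove $\|bc\|_{\ell^r}\le\|b\|_{\ell^{q_1}}\|c\|_{\ell^{q_2}}$. When $q_1,q_2<\infty$ the exponents $q_1/r$ and $q_2/r$ are H\"older-conjugate and $\ge1$, so raising to the power $r$ and applying the ordinary H\"older inequality to $\sum_j|b(j)|^r|c(j)|^r$ gives the claim; the cases $q_1=\infty$ or $q_2=\infty$ are immediate. The resulting bound shows $a_1a_2\in\ell^{q_0}_{(\omega_0)}(\Lambda)$ with the stated norm estimate for $a_j\in\ell^{q_j}_{(\omega_j)}(\Lambda)$, which yields both the asserted extension of the product map from $\ell_0(\Lambda)\times\ell_0(\Lambda)$ and its continuity.

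For (2), set $\theta=\max(1,1/p_1,1/p_2)\ge1$; condition \eqref{Eq:LebExpYoung} together with $1/p_0\ge0$ forces $1/p_1+1/p_2\ge\theta$, so $1/r:=1/p_1+1/p_2-\theta\in[0,\infty)$ defines $r\in(0,\infty]$ and $\ell^r(\zz d)\hookrightarrow\ell^{p_0}(\zz d)$ contractively, reducing matters to $\|b*c\|_{\ell^r}\le\|b\|_{\ell^{p_1}}\|c\|_{\ell^{p_2}}$. By commutativity of convolution we may assume $p_1\le p_2$. If $p_1,p_2\ge1$, then $\theta=1$ and $r\ge1$ with $1+1/r=1/p_1+1/p_2$, which is the classical Young inequality for sequences (for instance by interpolating the endpoint bounds $\ell^1*\ell^{p_2}\subseteq\ell^{p_2}$ and $\ell^{p_2'}*\ell^{p_2}\subseteq\ell^\infty$, the latter being H\"older). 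If $p_1<1$, then $\theta=1/p_1$ and $r=p_2$: for $p_2\le1$, $p_2$-subadditivity gives $\|b*c\|_{\ell^{p_2}}^{p_2}\le\sum_{j,k}|b(j-k)|^{p_2}|c(k)|^{p_2}=\|b\|_{\ell^{p_2}}^{p_2}\|c\|_{\ell^{p_2}}^{p_2}$, whence $\|b*c\|_{\ell^{p_2}}\le\|b\|_{\ell^{p_2}}\|c\|_{\ell^{p_2}}\le\|b\|_{\ell^{p_1}}\|c\|_{\ell^{p_2}}$ using $\ell^{p_1}\hookrightarrow\ell^{p_2}$; for $p_2\ge1$, use $\|b\|_{\ell^1}\le\|b\|_{\ell^{p_1}}$ and $\ell^1*\ell^{p_2}\subseteq\ell^{p_2}$. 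In all cases the convolution sums converge absolutely (checked first for nonnegative sequences), and the continuous extension from $\ell_0(\Lambda)$ follows exactly as in the H\"older case.

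The only genuinely nonclassical point is the quasi-Banach regime $\min(p_1,p_2)<1$, where the convexity-based tools (Minkowski's inequality, interpolation) are unavailable; the reduction to the critical exponent $r$ together with $\bigl(\sum_k t_k\bigr)^p\le\sum_k t_k^p$ for $0<p\le1$ replaces them. Everything else is a bookkeeping of H\"older/Young exponents and the contractive embeddings $\ell^r\hookrightarrow\ell^{p_0}$ together with the pointwise weight reductions.
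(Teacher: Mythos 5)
Your proposal is correct, and in fact it supplies an argument that the paper itself does not contain: the text only states the proposition and defers the proof to Appendix A of \cite{Toft26}, so there is no in-paper proof to compare against. Your route is the expected one for that reference: pointwise elimination of the weights via \eqref{Eq:WeightCondHolder} and \eqref{Eq:WeightCondYoung}, identification of $\Lambda$ with $\zz d$, reduction to the critical exponent $r$ with a contractive embedding $\ell ^r\hookrightarrow \ell ^{q_0}$ (resp.\ $\ell ^{p_0}$), and then classical H{\"o}lder/Young in the Banach range versus the $r$-subadditivity $\bigl(\sum _k t_k\bigr)^r\le \sum _k t_k^r$, $0<r\le 1$, together with $\ell ^{p_1}\hookrightarrow \ell ^{p_2}$ in the quasi-Banach range; the case bookkeeping (including the observation that \eqref{Eq:LebExpYoung} with $1/p_0\ge 0$ forces $1/p_1+1/p_2\ge \max (1,1/p_1,1/p_2)$, so $r$ is well defined) is complete and the constants are $\le 1$ as claimed. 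The only point worth flagging is cosmetic and inherited from the statement itself: when some exponent equals $\infty$, finitely supported sequences are not dense, so ``unique extension'' should be read as the canonical extension given by the absolutely convergent defining formulas, which your estimates justify directly.
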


\par

The assertion (1) in Proposition \ref{Prop:HolderYoungDiscrLebSpaces}
is the standard
H{\"o}lder's inequality for discrete Lebesgue spaces. The assertion (2) in that
proposition is the usual Young's inequality for Lebesgue spaces on lattices
in the case when $p_0,p_1,p_2\in [1,\infty ]$. A proof of a weighted version
of Proposition \ref{Prop:HolderYoungDiscrLebSpaces} is given in
Appendix A in \cite{Toft26}.

\par

\section{Modulation spaces, multiplications and convolutions}\label{sec2}

\par

In this section we introduce modulation spaces,
and recall their basic properties, in particular in the context of Gelfand-Shilov spaces.
Notice that we permit the Lebesgue exponents to belong to the full interval $(0,\infty ]$
instead of the  most common choice $[1,\infty ]$, and general moderate weights which may have a (sub)exponential growth.
Here we also recall some facts on Gabor expansions for modulation spaces.

Then we deduce multiplication and convolution
estimates on modulation spaces.
There are several approaches to  multiplication and convolution
in the case when the involved Lebesgue exponents belong to
$[1,\infty ]$ (see \cite{CorGro,Fei1,FeiGro1,GuChFaZh,RSTT,Toft3}). Here we consider the case
when these exponents belong $(0,\infty )$ (see also \cite{BaCoNi,BaTe,GaSa,Rau1,Rau2,Toft13}).
In addition, and in order to keep the survey style of our exposition,
we focus on the bilinear case, and refer to \cite{Toft26}
for extension of these results to multi-linear products as well as allowing the
Lebesgue exponents to belong to the full interval $(0,\infty ]$.

\par

\subsection{Modulation spaces}\label{subsec1.6}

\par

The (classical) modulation spaces, essentially introduced in
\cite{Fei1} by Feichtinger are given in the following.
(See e.{\,}g. \cite{Fei6} for definition of more general modulation spaces.)

\par

%
%

\par

\begin{defn}
Let $p,q\in (0,\infty ]$, $\omega \in \mascP _E(\rr {2d})$ and
$\phi \in \Sigma _1 (\rr d)\setminus 0$.
\begin{enumerate}
\item The \emph{modulation space} $M^{p,q}_{(\omega )}(\rr d)$
consists of all $f\in \Sigma _1 '(\rr d)$ such that
$$
\nm f{M^{p,q}_{(\omega )}}\equiv \nm {V_\phi f}{L^{p,q}_{(\omega )}}
$$
is finite. The topology of $M^{p,q}_{(\omega )}(\rr d)$ is defined by
the (quasi-)norm $\nm \cdo{M^{p,q}_{(\omega )}}$;

\vrum

\item The \emph{modulation space (of Wiener amalgam type)}
$W^{p,q}_{(\omega )}(\rr d)$ consists of all $f\in \Sigma _1 '(\rr d)$ such that
$$
\nm f{W^{p,q}_{(\omega )}}\equiv \nm {V_\phi f}{L^{p,q}_{*,(\omega )}}
$$
is finite. The topology of $W^{p,q}_{(\omega )}(\rr d)$ is defined by
the (quasi-)norm $\nm \cdo{W^{p,q}_{(\omega )}}$.
\end{enumerate}
\end{defn}

\par

\begin{rem}\label{Rem:ModSpaces}
Modulation spaces possess several convenient properties.
In fact, let $p,q\in (0,\infty ]$, $\omega \in \mascP _E(\rr {2d})$ and
$\phi \in \Sigma _1(\rr d)\setminus 0$. Then the following is true
(see \cite{Fei1,Fei6,FeiGro1,FeiGro2,GaSa,Gro2} and
their analyses for verifications):
\begin{itemize}
\item the definitions of $M_{(\omega )}^{p,q}(\rr d)$ and
$W_{(\omega )}^{p,q}(\rr d)$
are independent of the choices of $\phi \in \Sigma _1 (\rr d)\setminus 0$,
and different choices give rise to equivalent quasi-norms;

\vrum

\item the spaces $M^{p,q}_{(\omega )}(\rr d)$ and
$W^{p,q}_{(\omega )}(\rr d)$ are quasi-Banach spaces
which increase with $p$ and $q$, and decrease with
$\omega$. If in addition $p,q\ge 1$, then they are Banach spaces;

\vrum

\item If in addition $p,q\ge 1$, then the $L^2(\rr d)$ scalar product,
$(\cdo ,\cdo )_{L^2(\rr d)}$, on $\Sigma _1 (\rr d)\times \Sigma _1 (\rr d)$ is
uniquely extendable to dualities between $M^{p,q}_{(\omega )}(\rr d)$
and $M^{p',q'}_{(1/\omega )}(\rr d)$, and between
$W^{p,q}_{(\omega )}(\rr d)$
and $W^{p',q'}_{(1/\omega )}(\rr d)$. If in addition $p,q<\infty$, then
the dual spaces of $M^{p,q}_{(\omega )}(\rr d)$ and $W^{p,q}_{(\omega )}(\rr d)$
can be identified with $M^{p',q'}_{(1/\omega )}(\rr d)$ respectively
$W^{p',q'}_{(1/\omega )}(\rr d)$, through the form $(\cdo ,\cdo )_{L^2(\rr d)}$;

\vrum

\item if $\omega _0(x,\xi )=\omega (-\xi ,x)$, then
$\mascF$ on $\Sigma _1 '(\rr d)$
restricts to a homeomorphism from $M^{p,q}_{(\omega )}(\rr d)$ to
$W^{q,p}_{(\omega _0)}(\rr d)$.

\vrum

\item The inclusions
\begin{alignat}{4}
\Sigma _1 (\rr d) &\subseteq &
M^{p,q}_{(\omega )}(\rr d),W^{p,q}_{(\omega )}(\rr d)
&\subseteq &\Sigma _1 '(\rr d)
\quad &\text{when} & \quad \omega \in \mascP _E(\rr {2d}),
\label{Eq:ModGSEmbeddings1}
\\[1ex]
\maclS _1 (\rr d) &\subseteq &
M^{p,q}_{(\omega )}(\rr d),W^{p,q}_{(\omega )}(\rr d)
&\subseteq &\maclS _1 '(\rr d)
\quad &\text{when} & \quad \omega \in \mascP _E^0(\rr {2d})
\label{Eq:ModGSEmbeddings2}
\intertext{and}
\mascS (\rr d) &\subseteq &
M^{p,q}_{(\omega )}(\rr d),W^{p,q}_{(\omega )}(\rr d)
&\subseteq & \, \mascS '(\rr d)
\quad &\text{when} & \quad \omega \in \mascP (\rr {2d}).
\label{Eq:ModSchwartzEmbeddings}
\end{alignat}
are continuous. If in addition $p,q<\infty$, then
these inclusions are dense.
\end{itemize}
\end{rem}

\par

We recall from \cite{Toft10} the embeddings
\eqref{Eq:ModGSEmbeddings1}--\eqref{Eq:ModSchwartzEmbeddings},
are essentially special cases of certain
characterizations of the Schwartz space, Gelfand-Shilov
spaces and their distribution spaces in terms of
suitable unions and intersections of modulation spaces. In fact,
let $p,q\in (0,\infty ]$ and $s\ge 1$ be fixed and set
\begin{equation}
v_{r,t}(x,\xi ) =
\begin{cases}
e^{r(|x|^{\frac 1t}+|\xi |^{\frac 1t}))}, & t\in \mathbf R_+
\\[1ex]
(1+|x|+|\xi |)^r, & t=\infty .
\end{cases}
\end{equation}
Then
\begin{alignat}{2}
\Sigma _s(\rr d) &= \bigcap _{r>0}M^{p,q}_{(v_{r,s})}(\rr d) &
&= \bigcap _{r>0}W^{p,q}_{(v_{r,s})}(\rr d),
\label{Eq:SchwartzGSModChar1}
\\[1ex]
\maclS _s(\rr d) &= \bigcup _{r>0}M^{p,q}_{(v_{r,s})}(\rr d) &
&= \bigcup _{r>0}W^{p,q}_{(v_{r,s})}(\rr d),
\label{Eq:SchwartzGSModChar2}
\\[1ex]
\mascS (\rr d) &= \bigcap _{r>0}M^{p,q}_{(v_{r,\infty })}(\rr d) &
&= \bigcap _{r>0}W^{p,q}_{(v_{r,\infty})}(\rr d),
\label{Eq:SchwartzGSModChar3}
\\[1ex]
\mascS '(\rr d) &= \bigcup _{r>0}M^{p,q}_{(1/v_{r,\infty })}(\rr d) &
&= \bigcup _{r>0}W^{p,q}_{(1/v_{r,\infty})}(\rr d),
\label{Eq:SchwartzGSModChar4}
\\[1ex]
\maclS _s'(\rr d) &= \bigcap _{r>0}M^{p,q}_{(1/v_{r,s})}(\rr d) &
&= \bigcap _{r>0}W^{p,q}_{(1/v_{r,s})}(\rr d)
\label{Eq:SchwartzGSModChar5}
\intertext{and}
\Sigma _s'(\rr d) &= \bigcup _{r>0}M^{p,q}_{(1/v_{r,s})}(\rr d) &
&= \bigcup _{r>0}W^{p,q}_{(1/v_{r,s})}(\rr d).
\label{Eq:SchwartzGSModChar6}
\end{alignat}
The topologies of the spaces on the left-hand sides of
\eqref{Eq:SchwartzGSModChar1}--\eqref{Eq:SchwartzGSModChar6}
are obtained by replacing each intersection
by projective limit with respect to $r>0$ and each union
with inductive limit with respect to $r>0$.

\par

The relations \eqref{Eq:SchwartzGSModChar1}--\eqref{Eq:SchwartzGSModChar6}
are essentially special cases of \cite[Theorem 3.9]{Toft10}, see also
\cite{GroZim, Teofanov2, Teofanov3}. In order
to be self-contained we here give a proof of \eqref{Eq:SchwartzGSModChar2}.

\par

\begin{proof}[Proof of \eqref{Eq:SchwartzGSModChar2}]
Since
$$
M^\infty _{(v_{2r,s})}(\rr d) \subseteq
M^{p,q}_{(v_{r,s})}(\rr d),W^{p,q}_{(v_{r,s})}(\rr d)
\subseteq M^\infty _{(v_{r,s})}(\rr d),
$$
it suffices to prove the result for $p=q=\infty$. Let
$\phi \in \Sigma _1(\rr d)\setminus 0$ be fixed. First suppose that
$$
f\in M^\infty _{(v_{r,s})}(\rr d)=W^\infty _{(v_{r,s})}(\rr d).
$$
Then it follows from the definition of modulation space norm that
\eqref{Eq:STFTCharGSFunc} holds for some $r>0$. By Remark
\ref{Rem:STFTCharGSFuncDist} it follows that $f\in \maclS _s(\rr d)$,
and we have proved
\begin{equation}\label{Eq:SchwartzGSModChar2A}
\bigcup _{r>0}M^\infty _{(v_{r,s})}(\rr d)
\subseteq \maclS _s(\rr d).
\end{equation}

\par

Suppose instead that $f\in \maclS _s(\rr d)$. Then
\eqref{Eq:STFTCharGSFunc} holds for some $r>0$, giving that
$f\in M^\infty _{(v_{r,s})}(\rr d)$. Hence \eqref{Eq:SchwartzGSModChar2A}
holds with reversed inclusion, and the result follows.
\end{proof}

\par

\begin{example}\label{ExampleSzero}
Let $ p=q=1$ and $\omega =1$. Then  $M^{1,1} _{(\omega )}
(\rr d) = M^{1} (\rr d)$ is {\em the Feichtinger algebra},
probably the most prominent example of a modulation space. We refer to a recent survey \cite{Jakobsen} for a detailed account on $M^{1} (\rr d)$, and to \cite[Lemma 11]{DPT2022} for a list of its basic properties.

Familiar examples arise when $ p=q=2$.
Then $M_{(\omega )} ^{2,2} (\rr d) =M^{2} (\rr d)  = L^2 (\rr d)$, and
\begin{equation*}
M_{(\omega_s)} ^{2,2} (\rr d) = H^s  (\rr d),  \qquad s \in \mathbf{R},
\label{eqModulationSpace1.1}
\end{equation*}
where $\omega_s(\xi) = \langle \xi \rangle^{s}$, and  $H^s (\rr d)$ is the Sobolev space (also known as the Bessel potential space) of distributions
$f \in \mathcal S^{\prime}(\rr d) $ such that
\begin{equation*}
\|f\|_{H^s}^2 := \int_{\rr d} \langle \xi \rangle^{2s} | \widehat{f} (\xi)|^2 d \xi < \infty,
\label{eqModulationSpace1.2}
\end{equation*}
cf. \cite[Proposition 11.3.1]{Gro2}. Furthermore,
if $v_{s} (x,\xi) =  \langle (x,\xi) \rangle^{s},$ then
$ M_{(v_{s})} ^{2,2}  (\rr d) = Q_s  (\rr d)$, $s \in \mathbf{R},$
 \cite[Lemma 2.3]{Boggiatto}. Here $ Q_s $ denotes the Shubin-Sobolev space, \cite{Shubin1}.
\end{example}

\par

Finally we remark that modulation spaces can be
conveniently discretized in terms of Gabor expansions.
In order for explaining some basic issues on this, in similar ways as
in Subsection 1.5 in \cite{Toft26}, we limit
ourself to the case when the involved weights are moderated
by subexponential functions. That is, we suppose that $\omega$
in $M^{p,q}_{(\omega )}(\rr d)$ satisfies
\begin{equation}\label{Eq:SubexpMod}
\omega (x+y,\xi +\eta )
\lesssim
\omega (x,\xi )e^{r(|x|^{\frac 1s}+|\xi |^{\frac 1s})},
\end{equation}
for some $s>1$ and $r>0$. We observe that this implies that
\begin{equation}
\Sigma _s(\rr d) \subseteq M^{p,q}_{(\omega )}(\rr d)
\subseteq \Sigma _s'(\rr d),
\end{equation}
in vew of \eqref{moderateconseq}, \eqref{Eq:SchwartzGSModChar1}
and \eqref{Eq:SchwartzGSModChar6}. For more general approaches
we refer to \cite{FeiGro1,Gro1,Gro2,Rau2,Toft13}.

\par

Since $s>1$, it follows from Sections 1.3 and 1.4 in \cite{Ho1}
that there are $\phi ,\psi \in \Sigma _s (\rr d)$ with values in
$[0,1]$ such that
\begin{alignat}{4}
\supp \phi &\subseteq \Big [-\frac 34,\frac 34\Big ] ^d,&
\quad
\phi (x) &= 1&
\quad &\text{when}& \quad
x&\in \Big [-\frac 14,\frac 14\Big ]^d
\label{Eq:phiProp}
\\[1ex]
\supp \psi &\subseteq [-1,1]^d,&
\quad
\psi (x) &= 1&
\quad &\text{when}& \quad
x&\in \Big [-\frac 34,\frac 34\Big ]^d
\label{Eq:psiProp}
\end{alignat}
and
\begin{equation}\label{Eq:PartUnity1}
\sum _{j\in \zz d}\phi (\cdo -j) =1.
\end{equation}

\par

Let $f\in \Sigma _s'(\rr d)$. Then
$x\mapsto f(x)\phi (x-j)$ belongs to $\Sigma _s'(\rr d)$
and is supported in $j+[-\frac 34,\frac 34]^d$. Hence, by
periodization it follows from Fourier analysis that
\begin{equation}\label{Eq:FirstExp}
f(x)\phi (x-j) = \sum _{\iota \in \pi \zz d}c(j,\iota )e^{i\scal x\iota},
\qquad
x\in j+[-1,1]^d,
\end{equation}
where
$$
c(j,\iota ) = 2^{-d}(f,\phi (\cdo -j)e^{i\scal \cdo \iota})
=
\left ( \frac \pi 2\right )^{\frac d2} V_\phi f(j,\iota ),
\qquad j\in \zz d,\ \iota \in \pi \zz d .
$$
Since $\psi =1$ on the support of $\phi$, \eqref{Eq:FirstExp} gives
\begin{equation}\tag*{(\ref{Eq:FirstExp})$'$}
f(x)\phi (x-j) = \left ( \frac \pi 2\right )^{\frac d2}
\sum _{\iota \in \pi \zz d}V_\phi f(j,\iota )\psi (x-j)e^{i\scal x\iota},
\qquad
x\in \rr d,
\end{equation}
By \eqref{Eq:PartUnity1} it now follows that
\begin{align}
f(x) &= \left ( \frac \pi 2\right )^{\frac d2}
\sum _{(j,\iota )\in \Lambda}V_\phi f(j,\iota )\psi (x-j)e^{i\scal x\iota},
\qquad
x\in \rr d,
\label{Eq:SpecGaborExp1}
\intertext{where}
\Lambda &= \zz d\times (\pi \zz d),
\label{Eq:OurLattice}
\end{align}
which is the \emph{Gabor expansion} of $f$ with respect to the
\emph{Gabor pair} $(\phi ,\psi )$ and lattice $\Lambda$,
i.{\,}e. with respect to the \emph{Gabor atom}
$\phi$ and the \emph{dual Gabor atom} $\psi$.
Here the series converges in
$\Sigma _s'(\rr d)$. By duality and the fact that
compactly supported elements in
$\Sigma _s(\rr d)$ are dense in $\Sigma _s'(\rr d)$
we also have
\begin{equation}\label{Eq:SpecGaborExp2}
f(x) = \left ( \frac \pi 2\right )^{\frac d2}
\sum _{(j,\iota )\in \Lambda}V_\psi f(j,\iota )\phi (x-j)e^{i\scal x\iota},
\qquad
x\in \rr d,
\end{equation}
with convergence in $\Sigma _s'(\rr d)$.

\par

Let $T$ be a linear continuous operator from $\Sigma _s(\rr d)$ to
$\Sigma _s'(\rr d)$ and let $f\in \Sigma _s(\rr d)$. Then
it follows from \eqref{Eq:SpecGaborExp1} that
$$
(Tf)(x) = \left ( \frac \pi 2\right )^{\frac d2}
\sum _{(j,\iota )\in \Lambda}V_\phi f(j,\iota )T(\psi (\cdo -j)e^{i\scal \cdo \iota})(x)
$$
and
$$
T(\psi (\cdo -j)e^{i\scal \cdo \iota})(x)
=
\left ( \frac \pi 2\right )^{\frac d2}
\sum _{(k,\kappa )\in \Lambda}(V_\phi (T(\psi (\cdo -j)e^{i\scal \cdo \iota})))(k,\kappa )
\psi (x-k)e^{i\scal x\kappa}.
$$
A combination of these expansions show that
\begin{equation}\label{Eq:SpecGaborExp3}
(Tf)(x) = \left ( \frac \pi 2\right )^{\frac d2}
\sum _{(j,\iota )\in \Lambda} (A\cdot V_\phi f)(j,\iota )\psi (x-j)e^{i\scal x\iota},
\end{equation}
where $A=(a(\mabfj ,K))_{\mabfj ,\mabfk \in \Lambda}$ is the
$\Lambda \times \Lambda$-matrix, given by
\begin{multline}\label{Eq:OpGaborMatrix}
a(\mabfj ,\mabfk ) = \left ( \frac \pi 2\right )^{\frac d2}
(T(\psi (\cdo -j)e^{i\scal \cdo \iota}) , \phi (\cdo -k)e^{i\scal \cdo \kappa} )_{L^2(\rr d)}
\\[1ex]
\text{when} \quad \mabfj =(j,\iota ) \ \text{and} \ \mabfk =(k,\kappa ).
\end{multline}

\par

By the Gabor analysis for modulation spaces we get the following
restatement of \cite[Proposition 1.8]{Toft26}. We
refer to \cite{Fei3,FeiGro1,FeiGro2,FeiGro3,GaSa,Gro1,Gro2,Toft13} for details.

\par

\begin{prop}\label{Prop:GaborExpMod}
Let $s> 1$, $p,q\in (0,\infty ]$, $\omega \in \mascP _E
(\rr {2d})$ be such that \eqref{Eq:SubexpMod} holds for some
$r>0$,
$\phi ,\psi \in \Sigma _s (\rr d) $ with values in $[0,1]$ be such that
\eqref{Eq:phiProp}, \eqref{Eq:psiProp} and \eqref{Eq:PartUnity1}
hold true, and let $f\in \Sigma _s'(\rr d)$. Then the following is true:
\begin{enumerate}
\item $f\in M^{p,q}_{(\omega )}(\rr d)$, if and only if
$\nm {V_\phi f}{\ell _{(\omega )}^{p,q}(\zz d\times \pi \zz d)}$;

\vrum

\item $f\in M^{p,q}_{(\omega )}(\rr d)$, if and only if
$\nm {V_\psi f}{\ell _{(\omega )}^{p,q}(\zz d\times \pi \zz d)}$;

\vrum

\item the quasi-norms
$$
f\mapsto \nm {V_\phi f}{\ell _{(\omega )}^{p,q}(\zz d\times \pi \zz d)}
\quad \text{and}\quad
f\mapsto \nm {V_\psi f}{\ell _{(\omega )}^{p,q}(\zz d\times \pi \zz d)}
$$
are equivalent to $\nm \cdo{M^{p,q}_{(\omega )}}$.
\end{enumerate}
The same holds true with $W^{p,q}_{(\omega )}$
and $\ell _{*,(\omega )}^{p,q}$ in place of
$M^{p,q}_{(\omega )}$
respectively $\ell _{(\omega )}^{p,q}$ at each occurrence.
\end{prop}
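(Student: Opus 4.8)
The plan is to prove Proposition~\ref{Prop:GaborExpMod} by reducing the characterization through the discrete STFT samples $V_\phi f(j,\iota)$ on the lattice $\Lambda = \zz d\times \pi\zz d$ to the already-established continuous definition of $M^{p,q}_{(\omega)}$, and to do this by sandwiching the discrete quasi-norm between two continuous quasi-norms with equivalent comparison constants. The starting point is the Gabor expansion \eqref{Eq:SpecGaborExp1}--\eqref{Eq:SpecGaborExp2}, which already gives, for every $f\in\Sigma_s'(\rr d)$, the synthesis reconstruction of $f$ from its samples $V_\phi f(j,\iota)$ using the dual window $\psi$, and conversely from the samples $V_\psi f(j,\iota)$ using $\phi$. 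Thus one direction is essentially a \emph{synthesis} estimate: if $c = \{c(j,\iota)\}\in\ell^{p,q}_{(\omega)}(\Lambda)$, then $f = (\pi/2)^{d/2}\sum_{(j,\iota)\in\Lambda} c(j,\iota)\,\psi(\cdo-j)e^{i\scal\cdo\iota}$ lies in $M^{p,q}_{(\omega)}(\rr d)$ with $\nm{f}{M^{p,q}_{(\omega)}}\lesssim \nm{c}{\ell^{p,q}_{(\omega)}}$, and the other is an \emph{analysis} estimate: if $f\in M^{p,q}_{(\omega)}(\rr d)$ then $\{V_\phi f(j,\iota)\}_{(j,\iota)\in\Lambda}\in\ell^{p,q}_{(\omega)}(\Lambda)$ with the reverse bound.

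The key computational step for both directions is to control the interaction between one Gabor atom (at lattice point $(j,\iota)$, window $\psi$) and the STFT taken with the other window $\phi$; that is, to estimate the ``Gramian-type'' kernel
\begin{equation*}
K\big((x,\xi),(j,\iota)\big) = \big|V_\phi\big(\psi(\cdo-j)e^{i\scal\cdo\iota}\big)(x,\xi)\big|.
\end{equation*}
Since $\phi,\psi\in\Sigma_s(\rr d)$ have values in $[0,1]$ and compact support (by \eqref{Eq:phiProp}--\eqref{Eq:psiProp}), a direct computation using \eqref{Eq:STFTDef}$''$ and the fact that $V_\phi(\psi(\cdo-j)e^{i\scal\cdo\iota})(x,\xi)$ is, up to a phase and translation, $V_\phi\psi(x-j,\xi-\iota)$, shows $|V_\phi\psi(x,\xi)|\lesssim e^{-r_0(|x|^{1/s}+|\xi|^{1/s})}$ for every $r_0>0$, by Remark~\ref{Rem:STFTCharGSFuncDist}(1) applied to $\psi\in\Sigma_s$. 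Convolving \eqref{Eq:SpecGaborExp1} against a fresh window and combining with the subexponential weight bound \eqref{Eq:SubexpMod}, one transfers the weight from the continuous variable $(x,\xi)$ to the lattice point $(j,\iota)$ at the cost of an extra factor $e^{(r+\epsilon)(|x-j|^{1/s}+|\xi-\iota|^{1/s})}$, which by the above is dominated by the rapidly decaying kernel. The remaining sum/integral is then handled by the discrete Young inequality (Proposition~\ref{Prop:HolderYoungDiscrLebSpaces}(2)) for the analysis bound, together with the trivial embedding $\ell^1*\ell^{p,q}\subseteq\ell^{p,q}$ valid even in the quasi-Banach range since the convolving sequence $n\mapsto e^{-r_0|n|^{1/s}}$ lies in $\ell^{\min(1,p,q)}$; for the synthesis bound one uses the dual statement that a rapidly decaying convolution kernel maps $\ell^{p,q}_{(\omega)}$-sequences to $M^{p,q}_{(\omega)}$ continuously, which follows by testing against $V_\phi$ of the synthesized function.

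For the actual equivalence of quasi-norms in (3), I would chain the estimates: $\nm{V_\phi f}{\ell^{p,q}_{(\omega)}(\Lambda)}\lesssim \nm{f}{M^{p,q}_{(\omega)}}$ is the analysis bound (sample the continuous STFT and dominate a sum over a fundamental domain by the integral, using a local maximal-function / mean-value estimate for $V_\phi f$ which is licit since $V_\phi f\in C^\infty$, plus \eqref{Eq:SubexpMod} to move the weight); conversely $\nm{f}{M^{p,q}_{(\omega)}} = \nm{V_\psi f}{L^{p,q}_{(\omega)}}\lesssim \nm{V_\phi f}{\ell^{p,q}_{(\omega)}(\Lambda)}$ follows by inserting the Gabor expansion \eqref{Eq:SpecGaborExp1} into $V_\psi f$, applying the kernel estimate above and the discrete Young inequality, and observing that $\nm{\cdo}{M^{p,q}_{(\omega)}}$ is independent of the window by Remark~\ref{Rem:ModSpaces}. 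The same arguments with the roles of $\phi$ and $\psi$ interchanged and \eqref{Eq:SpecGaborExp2} in place of \eqref{Eq:SpecGaborExp1} give statements (2). The Wiener-amalgam variant is obtained verbatim with $L^{p,q}_{*,(\omega)}$, $\ell^{p,q}_{*,(\omega)}$ everywhere, since the kernel estimates are symmetric in the two phase-space variables and Proposition~\ref{Prop:HolderYoungDiscrLebSpaces} applies equally to the starred mixed-norm spaces.

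The main obstacle is the quasi-Banach regime $p,q<1$: the convergence of the Gabor series in the right topology and the validity of the Young-type summation both need care, because one cannot use duality or the triangle inequality freely. The resolution is that all ``overlap'' sequences decay like $e^{-r_0|n|^{1/s}}$ with $r_0$ as large as we like, hence belong to $\ell^{\rho}(\zz d)$ for $\rho=\min(1,p,q)$, so the relevant convolution and summation estimates hold in the $\rho$-normed (hence $p$-normed) setting; the subexponential hypothesis \eqref{Eq:SubexpMod} is exactly what guarantees the weight does not destroy this decay. I would therefore carry the exponent $\rho=\min(1,p,q)$ explicitly through the kernel estimates and cite Proposition~\ref{Prop:HolderYoungDiscrLebSpaces}(2) (whose Young condition \eqref{Eq:LebExpYoung} in the case of an $\ell^1$-type factor reduces to no loss) rather than the classical Young inequality. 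Everything else is the standard Gabor-frame bookkeeping already recorded in \cite{Toft26} and \cite{Toft13}, to which the proof ultimately defers for the routine details.
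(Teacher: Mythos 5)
The paper does not actually prove Proposition \ref{Prop:GaborExpMod}: it is presented as a restatement of \cite[Proposition 1.8]{Toft26}, with the arguments delegated to \cite{Fei3,FeiGro1,GaSa,Gro2,Toft13}. Your outline follows precisely the route taken in those sources: covariance of the STFT to reduce the cross terms to $V_\phi\psi(x-j,\xi-\iota)$, the decay $|V_\phi\psi(x,\xi)|\lesssim e^{-r_0(|x|^{1/s}+|\xi|^{1/s})}$ for every $r_0>0$ from Remark \ref{Rem:STFTCharGSFuncDist}, transfer of the weight via \eqref{Eq:SubexpMod}, and convolution/summation estimates carried out with the exponent $\rho=\min(1,p,q)$ so that the quasi-Banach range is covered. (Your parenthetical remark ``$\ell^1*\ell^{p,q}\subseteq\ell^{p,q}$'' is false for $p,q<1$ as literally stated, but the mechanism you actually invoke, namely that the kernel lies in $\ell^{\rho}$ and $\ell^{\rho}*\ell^{p,q}\subseteq\ell^{p,q}$, is the right one.) So there is no methodological divergence to report; the synthesis direction and the Wiener-amalgam variant are fine in outline.

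The one step whose stated justification would fail is the analysis bound $\nm{V_\phi f}{\ell^{p,q}_{(\omega)}(\zz d\times\pi\zz d)}\lesssim\nm f{M^{p,q}_{(\omega)}}$, which you support by ``a local maximal-function / mean-value estimate for $V_\phi f$, licit since $V_\phi f\in C^\infty$''. Smoothness of $V_\phi f$ alone gives no uniform control of point values (or cube-wise suprema) by local $L^{p,q}$ averages: the constant in any such mean-value inequality for an arbitrary smooth function on a unit cube is unbounded, and it is exactly here that the structure of the range of $V_\phi$ must enter. The correct ingredient, available in the paper, is the reproducing identity \eqref{Eq:STFTWindTrans}, which yields the pointwise domination $|V_\phi f|\le\nm\phi{L^2}^{-2}\,\bigl(|V_\phi\phi|*|V_\phi f|\bigr)$ and hence controls the lattice samples, and in fact the full cube-wise suprema, by convolution with a kernel decaying like $e^{-r_0(|x|^{1/s}+|\xi|^{1/s})}$ for every $r_0$; equivalently you may invoke the amalgam equivalence \eqref{Eq:ModWienerConn1}--\eqref{Eq:ModWienerConn2} of Appendix \ref{App:A}, after which sampling on $\zz d\times\pi\zz d$ is trivially dominated by the $\sfW(\omega,\ell^{p,q})$ quasi-norm of $V_\phi f$. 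With that substitution, and the $\rho$-triangle bookkeeping you already describe for the synthesis direction (termwise application of the STFT to \eqref{Eq:SpecGaborExp1}, Proposition \ref{Prop:HolderYoungDiscrLebSpaces}, window independence from Remark \ref{Rem:ModSpaces}), the argument closes and agrees with the cited proofs.
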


%
%

\subsection{Multiplications and
convolutions in modulation spaces}\label{subsec2.2}

As a first step for approaching multiplications and
convolutions for elements in modulation spaces, we
reformulate such products in terms of short-time Fourier transforms.
Let $\phi _0,\phi _1 ,\phi _2\in \Sigma _1 (\rr d)$ be fixed
such that
\begin{equation}\label{Eq:MultDefFormula2A}
\phi _0=(2\pi )^{-\frac {d}2}\phi _1\phi _2
\end{equation}
and let $f_1,f_2\in \Sigma _1 (\rr d)$. Then the multiplication
$f_1f_2$ can be expressed by
\begin{align}
F_0(x,\xi )
&=
\big ( F_1(x,\cdo ) * F_2(x,\cdo ) \big ) (\xi ).
\label{Eq:MultDefFormula1Mod}
\intertext{where}
F_j &= V_{\phi _j}f_j,\qquad j=0,1,2,
\label{Eq:MultDefFormula2}
\end{align}

\par

In fact, by Fourier's inversion formula we get
\begin{multline*}
\big ( (V_{\phi _1}f_1)(x,\cdo )
*
(V_{\phi _2}f_2)(x,\cdo ) \big ) (\xi )
\\[1ex]
=
(2\pi )^{-d} \iiint
f_1(y_1)\overline{\phi _1(y_1-x)}
f_2(y_2)\overline{\phi _2(y_2-x)}
e^{-{i\scal {y_1}{\xi -\eta}}} e^{-i\scal {y_2}\eta}
\, dy_1dy_2d\eta
\\[1ex]
=
\int f_1(y)\overline{\phi _1(y-x)}
f_2(y)\overline{\phi _2(y-x)}e^{-i\scal y\xi}\, dy
=(2\pi )^{\frac d2}(V_{\phi _1\phi _2}(f_1f_2))(x,\xi ).
\end{multline*}

\par

We also observe that we may extract $f_0=f_1f_2$ by the formula
\begin{equation}\label{Eq:f0Extract}
f_0 = (\nm {\phi _0}{L^2})^{-1}V_{\phi _0}^*F_0 ,
\end{equation}
provided $\phi _0$ is not trivially equal to $0$.

\par

In the same way, let $\phi _0,\phi _1,\phi _2\in \Sigma _1 (\rr d)$ be
fixed such that
\begin{equation}\label{Eq:ConvDefFormula2A}
\phi _0=(2\pi )^{\frac d2}\phi _1*\phi _2,
\end{equation}
and let $f_1,f_2,g\in \Sigma _1 (\rr d)$. Then the
convolution $f_1*f_2$ can be expressed by
\begin{align}
F_0(x,\xi )
&=
\big ( F_1(\cdo ,\xi ) *F_2(\cdo ,\xi ) \big ) (x).
\label{Eq:ConvDefFormula1Mod}
\end{align}
where $F_j$ are given by \eqref{Eq:MultDefFormula2},
and that we may extract $f_0=f_1*f_2$ from
\eqref{Eq:f0Extract}.

\par

Next we discuss convolutions and multiplications for modulation spaces,
and start with the following convolution result for modulation spaces.
For multiplications of elements in modulation spaces we need
to swap the conditions for the involved Lebesgue exponents
compared to \eqref{Eq:LebExpHolder}
and \eqref{Eq:LebExpYoung}. That is, these conditions become
\begin{alignat}{2}
\frac 1{p_0} &\le  \frac 1{p_1}+\frac 1{p_2},&
\qquad
\frac 1{q_0} &\le  \frac 1{q_1}+\frac 1{q_2}
- \max \left (
1,\frac 1{p_0},\frac 1{q_1},\frac 1{q_2}
\right ),
\label{Eq:LebExpHolderYoung1}
\intertext{or}
\frac 1{p_0} &\le  \frac 1{p_1}+\frac 1{p_2},&
\qquad
\frac 1{q_0} &\le  \frac 1{q_1}+\frac 1{q_2}
- \max \left (
1,\frac 1{q_1},\frac 1{q_2}
\right ).
\label{Eq:LebExpHolderYoung2}
\end{alignat}
The conditions on the weight functions are
\begin{alignat}{2}
\omega _0(x,\xi _1+\xi _2) &\le \omega _1(x,\xi _1)\omega _2(x,\xi _2), &
\qquad
x,\xi _1,\xi _2 &\in \rr d,
\label{Eq:WeightCondHolderMod}
\intertext{respectively}
\omega _0(x_1+x_2,\xi ) &\le \omega _1(x_1,\xi )\omega _2(x_2,\xi ), &
\qquad
x_1,x_2,\xi &\in \rr d .
\label{Eq:WeightCondYoungMod}
\end{alignat}

\par

\begin{thm}\label{Thm:MultMod1}
Let $p_j, q_j\in (0,\infty )$ and $\omega _j\in \mascP _E(\rr {2d})$, $j=0,1,2$,
be such that \eqref{Eq:LebExpHolderYoung1} and
\eqref{Eq:WeightCondHolderMod} hold. Then
$(f_1,f_2)\mapsto f_1f_2$
from $\Sigma _1 (\rr d)\times \Sigma _1(\rr d)$
to $\Sigma _1 (\rr d)$ is uniquely extendable to
a continuous map from $M^{p_1,q_1}_{(\omega _1)}(\rr d)
\times M^{p_2,q_2}_{(\omega _2)}(\rr d)$
to $M_{(\omega _0)}^{p_0,q_0}(\rr d)$, and
\begin{equation}\label{Eq:MultMod1}
\nm {f_1f_2}{M_{(\omega _0)}^{p_0,q_0}}
\lesssim
\nm {f_1}{M_{(\omega _1)}^{p_1,q_1}}\nm {f_2}{M_{(\omega _2)}^{p_2,q_2}} ,
\quad
f_j\in M_{(\omega _j)}^{p_j,q_j}(\rr d),\ j=1,2 \text .
\end{equation}
\end{thm}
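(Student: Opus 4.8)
The plan is to express the product through the short-time Fourier transform as a convolution in the frequency variable, to discretize the resulting mixed-norm estimate by Gabor analysis, and to conclude with the discrete H\"older and Young inequalities of Proposition \ref{Prop:HolderYoungDiscrLebSpaces}, the quasi-Banach range $p_j,q_j<1$ being handled by raising suitable quantities to the power $\min (1,p_0)$. To begin, using the window independence of the modulation norm (Remark \ref{Rem:ModSpaces}) I would fix $\phi _1,\phi _2\in \Sigma _1(\rr d)\setminus 0$ with $\phi _0:=(2\pi )^{-d/2}\phi _1\phi _2\neq 0$ (e.g.\ Gaussians), so that \eqref{Eq:MultDefFormula2A} holds with $\phi _0\in \Sigma _1(\rr d)\setminus 0$. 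For $f_1,f_2\in \Sigma _1(\rr d)$ the computation preceding \eqref{Eq:f0Extract} gives $V_{\phi _0}(f_1f_2)(x,\xi )=(2\pi )^{-d}\bigl(V_{\phi _1}f_1(x,\cdo )*V_{\phi _2}f_2(x,\cdo )\bigr)(\xi )$, a convolution in $\xi$ alone. Writing $H_j:=\omega _j\,|V_{\phi _j}f_j|\ge 0$ and inserting \eqref{Eq:WeightCondHolderMod} in the form $\omega _0(x,\xi )=\omega _0(x,(\xi -\eta )+\eta )\le \omega _1(x,\xi -\eta )\omega _2(x,\eta )$ under the integral sign, I obtain the pointwise majorant $\omega _0(x,\xi )\,|V_{\phi _0}(f_1f_2)(x,\xi )|\lesssim \bigl(H_1(x,\cdo )*H_2(x,\cdo )\bigr)(\xi )$. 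It then suffices to bound the $L^{p_0,q_0}(\rr {2d})$-norm of this partial convolution by $\nm {H_1}{L^{p_1,q_1}}\nm {H_2}{L^{p_2,q_2}}=\nm {f_1}{M^{p_1,q_1}_{(\omega _1)}}\nm {f_2}{M^{p_2,q_2}_{(\omega _2)}}$; the unique continuous extension to $M^{p_1,q_1}_{(\omega _1)}\times M^{p_2,q_2}_{(\omega _2)}$ and the estimate \eqref{Eq:MultMod1} then follow from this bilinear bound and the density of $\Sigma _1(\rr d)$ in $M^{p_j,q_j}_{(\omega _j)}(\rr d)$, valid since $p_j,q_j<\infty$ (Remark \ref{Rem:ModSpaces}, \eqref{Eq:ModGSEmbeddings1}).

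Next I would discretize. Fix a lattice $\Lambda _0=\alpha \zz d\times \alpha \zz d\subseteq \rr {2d}$, $\alpha>0$ small, with product cell $Q=Q_x\times Q_\xi$. For any measurable $F\ge 0$ there is the elementary bound $\nm F{L^{p,q}(\rr {2d})}\le C\,\bigl\Vert \{\sup _{Q+\lambda}F\}_{\lambda \in \Lambda _0}\bigr\Vert _{\ell ^{p,q}(\Lambda _0)}$ (integrate over the cells). Applying it to $F=\omega _0|V_{\phi _0}(f_1f_2)|$ and splitting the $\eta$-integral in the majorant over the cells of $\alpha \zz d$, I get $\sup _{(Q_x+k)\times (Q_\xi +\kappa )}F\lesssim b(k,\kappa ):=\sum _{\iota \in \alpha \zz d}a_1(k,\kappa -\iota )\,a_2(k,\iota )$, where $a_j(k,\iota )=\sup _{(Q_x+k)\times (\widetilde Q_\xi +\iota )}H_j$ for a fixed dilate $\widetilde Q_\xi \supseteq Q_\xi$ (moderateness of $\omega _j$ absorbing the cell translations), hence $\nm {f_1f_2}{M^{p_0,q_0}_{(\omega _0)}}\lesssim \nm b{\ell ^{p_0,q_0}(\Lambda _0)}$. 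Conversely, since each $V_{\phi _j}f_j$ is a short-time Fourier transform, the sampled description of modulation norms --- Proposition \ref{Prop:GaborExpMod} via its underlying Gabor expansion, extended to the fine lattice $\Lambda _0$ and to general $\omega _j\in \mascP _E(\rr {2d})$ by expanding $V_{\phi _j}f_j$ against a rapidly decreasing window and taking suprema cell by cell (cf.\ \cite{Toft26}) --- yields $\nm {a_j}{\ell ^{p_j,q_j}(\Lambda _0)}\lesssim \nm {V_{\phi _j}f_j}{L^{p_j,q_j}_{(\omega _j)}}=\nm {f_j}{M^{p_j,q_j}_{(\omega _j)}}$.

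It then remains to prove $\nm b{\ell ^{p_0,q_0}}\lesssim \nm {a_1}{\ell ^{p_1,q_1}}\nm {a_2}{\ell ^{p_2,q_2}}$, and this is where the non-convex analysis enters. Put $r_0=\min (1,p_0)$ and $A_j(\iota )=\nm {a_j(\cdo ,\iota )}{\ell ^{p_j}(\alpha \zz d)}$. Discrete H\"older in the $k$-variable (Proposition \ref{Prop:HolderYoungDiscrLebSpaces}(1), with hypothesis $1/p_0\le 1/p_1+1/p_2$ from \eqref{Eq:LebExpHolderYoung1}) together with the $r_0$-subadditivity of the $\ell ^{p_0}$-quasinorm gives
\[
\nm {b(\cdo ,\kappa )}{\ell ^{p_0}}^{r_0}\le \sum _{\iota }\bigl(A_1(\kappa -\iota )\,A_2(\iota )\bigr)^{r_0}=\bigl(A_1^{r_0}*A_2^{r_0}\bigr)(\kappa ),
\]
hence $\nm b{\ell ^{p_0,q_0}}\le \bigl\Vert A_1^{r_0}*A_2^{r_0}\bigr\Vert _{\ell ^{q_0/r_0}}^{1/r_0}$. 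Applying the discrete Young inequality (Proposition \ref{Prop:HolderYoungDiscrLebSpaces}(2)) to the exponents $q_0/r_0,q_1/r_0,q_2/r_0$, whose hypothesis \eqref{Eq:LebExpYoung} reads $r_0/q_0\le r_0/q_1+r_0/q_2-\max (1,r_0/q_1,r_0/q_2)$ and, after dividing by $r_0$ and using $1/r_0=\max (1,1/p_0)$, is precisely the second inequality in \eqref{Eq:LebExpHolderYoung1}, I obtain $\nm b{\ell ^{p_0,q_0}}\le \nm {A_1}{\ell ^{q_1}}\nm {A_2}{\ell ^{q_2}}=\nm {a_1}{\ell ^{p_1,q_1}}\nm {a_2}{\ell ^{p_2,q_2}}$. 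Chaining the last two paragraphs completes the proof.

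The step I expect to be the main obstacle is this last one together with the discretization that feeds it: one must justify the equivalence between the continuous $L^{p,q}_{(\omega )}$-norm and its discrete $\ell ^{p,q}_{(\omega )}$-analogue uniformly over all exponents in $(0,\infty )$ and all $\omega \in \mascP _E(\rr {2d})$ --- the point at which Gabor analysis and the non-convex arguments are genuinely needed --- and to track how raising to the power $r_0=\min (1,p_0)$ shifts exactly the Young loss $\max (1,1/p_0,1/q_1,1/q_2)$ onto the $q$-exponents, so that the sharp condition \eqref{Eq:LebExpHolderYoung1}, and nothing stronger, is what is required. In the Banach range $p_j,q_j\ge 1$ the argument collapses to the classical H\"older--Young estimate of Feichtinger.
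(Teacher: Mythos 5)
Your argument is correct, but it follows a genuinely different route from the paper for this particular statement. The paper never proves Theorem \ref{Thm:MultMod1} directly: it notes that, since the Fourier transform exchanges $M^{p,q}_{(\omega )}$ with $W^{q,p}_{(\omega _0)}$ and products with convolutions, Theorem \ref{Thm:MultMod1} is the Fourier image of the Wiener-amalgam convolution result Theorem \ref{Thm:ConvMod2} (condition \eqref{Eq:LebExpHolderYoung1} turns into \eqref{Eq:LebExpYoungHolder1} under the swap $p_j\leftrightarrow q_j$), and the only displayed proof is that of Theorem \ref{Thm:ConvMod1} in the unweighted case, the rest being deferred to \cite{Toft26}. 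In that displayed proof the convolution acts in the inner ($\ell ^{p}$) index, so Young is applied first and H{\"o}lder second, no interchange of the mixed norms is needed, and the term $\max (1,1/p_0,1/q_1,1/q_2)$ never appears. You instead prove the multiplication theorem directly through \eqref{Eq:MultDefFormula1Mod}, where the convolution sits in the outer (frequency) index of the $M$-norm, and the extra ingredient you supply --- H{\"o}lder in $k$ inside the convolution sum, $r_0$-subadditivity with $r_0=\min (1,p_0)$, then discrete Young for the exponents $q_j/r_0$ --- is precisely the mechanism that generates the loss $\max (1,1/p_0,1/q_1,1/q_2)$ in \eqref{Eq:LebExpHolderYoung1}; your exponent bookkeeping there is exact. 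So your proof is more self-contained than the text at this point, while sharing its discretization machinery with the displayed proof of Theorem \ref{Thm:ConvMod1}, and it also covers the weighted case that the paper omits. Two remarks on attribution: the bound $\nm {a_j}{\ell ^{p_j,q_j}}\lesssim \nm {f_j}{M^{p_j,q_j}_{(\omega _j)}}$ for cell suprema of $\omega _j|V_{\phi _j}f_j|$ is not quite Proposition \ref{Prop:GaborExpMod} (which concerns lattice samples and weights satisfying \eqref{Eq:SubexpMod}); the fact you need is the Wiener-amalgam characterization \eqref{Eq:ModWienerConn1}--\eqref{Eq:ModWienerConn2} from Appendix \ref{App:A}, valid for all $p,q\in (0,\infty ]$ and $\omega \in \mascP _E(\rr {2d})$ and proved in \cite{GaSa,Toft13,Toft25}, which is also what the paper itself invokes. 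Finally, your direct use of $\phi _0=(2\pi )^{-d/2}\phi _1\phi _2$ as an admissible window lets you bypass the $V_{\phi _0}^*$ and $P_{\phi _0}$ step (Proposition \ref{Prop:ProjMapModCont}) of the paper's argument, which is a legitimate small simplification.
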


\par

\begin{thm}\label{Thm:MultMod2}
Let $p_j, q_j\in (0,\infty )$ and $\omega _j\in \mascP _E(\rr {2d})$, $j=0,1,2$,
be such that \eqref{Eq:LebExpHolderYoung2} and
\eqref{Eq:WeightCondHolderMod} hold. Then
$(f_1,f_2)\mapsto f_1f_2$
from $\Sigma _1 (\rr d)\times \Sigma _1(\rr d)$
to $\Sigma _1 (\rr d)$ is uniquely extendable to
a continuous map from $W^{p_1,q_1}_{(\omega _1)}(\rr d)
\times W_{(\omega _2)}^{p_2,q_2}(\rr d)$
to $W_{(\omega _0)}^{p_0,q_0}(\rr d)$, and
\begin{equation}\label{Eq:MultMod2}
\nm {f_1f_2}{W_{(\omega _0)}^{p_0,q_0}}
\lesssim
\nm {f_1}{W_{(\omega _1)}^{p_1,q_1}}
\nm {f_2}{W_{(\omega _2)}^{p_2,q_2}} ,
\quad
f_j\in W_{(\omega _j)}^{p_j,q_j}(\rr d),\ j=1,2 \text .
\end{equation}
\end{thm}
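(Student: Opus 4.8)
\textbf{Proof proposal for Theorem \ref{Thm:MultMod2}.}
The plan is to transport the product to the time--frequency side, discretise it by a Gabor frame, and then estimate the resulting sequences by the discrete H\"older and Young inequalities, reading off the exponent balance from the position of the two norms. Fix $\phi _j\in \Sigma _1(\rr d)$ with $\phi _0=(2\pi )^{-d/2}\phi _1\phi _2$ as in \eqref{Eq:MultDefFormula2A} and set $F_j=V_{\phi _j}f_j$. By \eqref{Eq:MultDefFormula1Mod}, $F_0(x,\xi )$ equals, up to a fixed constant, the convolution $\big (F_1(x,\cdo )*F_2(x,\cdo )\big )(\xi )$ \emph{in the frequency variable alone}, and $f_1f_2$ is recovered from $F_0$ through \eqref{Eq:f0Extract}. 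The structural observation behind the statement is that in $W^{p,q}_{(\omega )}(\rr d)$ the frequency variable carries the \emph{inner} $\ell ^q$-norm and the spatial variable the \emph{outer} $\ell ^p$-norm; hence the $\xi$-convolution may be absorbed by Young's inequality pointwise in $x$, and only afterwards is one left with an ordinary product to be estimated by H\"older's inequality in $x$. This is precisely why the exponent condition \eqref{Eq:LebExpHolderYoung2} carries the Young correction $\max (1,1/q_1,1/q_2)$ on the $q$-indices but, in contrast with \eqref{Eq:LebExpHolderYoung1}, no correction involving $p_0$.

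First I would treat the case in which $\omega _1,\omega _2$, and hence $\omega _0$, are moderated by subexponential weights, so that \eqref{Eq:SubexpMod} holds and the Gabor characterisation of Proposition \ref{Prop:GaborExpMod} applies. Picking windows as in \eqref{Eq:phiProp}--\eqref{Eq:PartUnity1} and the lattice $\Lambda =\zz d\times \pi \zz d$ of \eqref{Eq:OurLattice} — we may choose $\phi _1,\phi _2$ conveniently, since the $W$-norm is window-independent, and then $\phi _0$ is determined — we restrict $F_0,F_1,F_2$ to $\Lambda$. For each fixed node $j\in \zz d$ the sequence $\iota \mapsto F_0(j,\iota )$ is the discrete convolution over $\iota \in \pi \zz d$ of $F_1(j,\cdo )$ with $F_2(j,\cdo )$, so Proposition \ref{Prop:HolderYoungDiscrLebSpaces}(2), applied on the $\iota$-lattice with the exponents $q_1,q_2,q_0$ and the weight inequality \eqref{Eq:WeightCondHolderMod} read with $x$ fixed, bounds $\nm {F_0(j,\cdo )\omega _0(j,\cdo )}{\ell ^{q_0}}$ by $C\,\nm {F_1(j,\cdo )\omega _1(j,\cdo )}{\ell ^{q_1}}\,\nm {F_2(j,\cdo )\omega _2(j,\cdo )}{\ell ^{q_2}}$. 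Taking the outer $\ell ^{p_0}$-norm over $j\in \zz d$ and invoking Proposition \ref{Prop:HolderYoungDiscrLebSpaces}(1) with the exponents $p_1,p_2,p_0$ (the relation $1/p_0\le 1/p_1+1/p_2$, with trivial weights) yields
\[
\nm {V_{\phi _0}(f_1f_2)}{\ell ^{p_0,q_0}_{*,(\omega _0)}(\Lambda )}
\lesssim
\nm {V_{\phi _1}f_1}{\ell ^{p_1,q_1}_{*,(\omega _1)}(\Lambda )}\,
\nm {V_{\phi _2}f_2}{\ell ^{p_2,q_2}_{*,(\omega _2)}(\Lambda )},
\]
which Proposition \ref{Prop:GaborExpMod} turns into \eqref{Eq:MultMod2}. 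Since $p_j,q_j<\infty$, the space $\Sigma _1(\rr d)$ is dense in each $W^{p_j,q_j}_{(\omega _j)}(\rr d)$ (Remark \ref{Rem:ModSpaces}) and $\Sigma _1\cdot \Sigma _1\subseteq \Sigma _1$, so the bilinear map extends uniquely by continuity. An alternative I would keep in reserve is to bypass the discretisation via the Fourier intertwiner $\mascF :W^{p,q}_{(\omega )}(\rr d)\to M^{q,p}_{(\omega _0)}(\rr d)$ of Remark \ref{Rem:ModSpaces} together with $\mascF (f_1f_2)=(2\pi )^{-d/2}\widehat {f_1}*\widehat {f_2}$, which reduces Theorem \ref{Thm:MultMod2} to the convolution statement of Theorem \ref{Thm:ConvMod1} on $M$-spaces, the swap $p\leftrightarrow q$ of the indices and the induced transformation of the weights turning \eqref{Eq:LebExpHolderYoung2} and \eqref{Eq:WeightCondHolderMod} into exactly the hypotheses of that theorem.

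The part I expect to require the most care is twofold. In the quasi-Banach regime $q_j<1$ the continuous Young inequality on $L^{q}(\rr d)$ fails, so the passage through a Gabor frame is not merely convenient but necessary, and one must verify that the exponent balance in \eqref{Eq:LebExpHolderYoung2} is exactly the one making the discrete Young estimate of Proposition \ref{Prop:HolderYoungDiscrLebSpaces}(2) valid — this is the source of the $\max (1,1/q_1,1/q_2)$ term. Moreover, Proposition \ref{Prop:GaborExpMod} was stated only for weights moderated by subexponential functions, whereas Theorem \ref{Thm:MultMod2} allows arbitrary $\omega _j\in \mascP _E(\rr {2d})$, possibly of genuine exponential growth; lifting this restriction needs the finer Gabor/wave-packet analysis for general $\mascP _E$-weights from the references cited after \eqref{Eq:SubexpMod}, or — when $p_j,q_j\ge 1$ — a direct argument with the continuous mixed Lebesgue spaces $L^{p,q}_{*,(\omega )}(\rr {2d})$ in place of their sequence analogues. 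I would expect this weighted extension, rather than the now routine unweighted estimate, to be the genuine obstacle.
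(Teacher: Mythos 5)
Your primary argument has a genuine gap at the discretisation step. The identity \eqref{Eq:MultDefFormula1Mod} expresses $F_0(x,\xi )$ as the \emph{continuous} convolution $\int _{\rr d}F_1(x,\eta )F_2(x,\xi -\eta )\, d\eta$; sampling this at $\xi =\iota \in \pi \zz d$ does \emph{not} produce the discrete convolution of the sampled sequences $\iota \mapsto F_1(j,\iota )$ and $\iota \mapsto F_2(j,\iota )$, so Proposition \ref{Prop:HolderYoungDiscrLebSpaces}\,(2) cannot be applied to the lattice restrictions as you do. The missing ingredient is exactly the Wiener amalgam step which the paper carries out in its proof of Theorem \ref{Thm:ConvMod1}: one passes to the local sup sequences $a_j(k,\kappa )=\nm {V_{\phi _j}f_j}{L^\infty ((k,\kappa )+Q_{2d,1})}$, dominates the continuous convolution using $0\le \chi _{k_1+Q}*\chi _{k_2+Q}\le \chi _{k_1+k_2+Q_{d,2}}$, controls the finitely many overlapping translates ($\Omega _l$ and the finite set $I$), and only then invokes the discrete Young and H{\"o}lder inequalities, returning to the modulation quasi-norm via \eqref{Eq:ModWienerConn1} and Proposition \ref{Prop:ProjMapModCont}. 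This detour through local sup norms is unavoidable in the quasi-Banach range, where lattice samples of a continuous convolution bear no pointwise relation to discrete convolutions. (Two smaller points: Proposition \ref{Prop:GaborExpMod} is tied to the specific windows \eqref{Eq:phiProp}--\eqref{Eq:PartUnity1} and to subexponentially moderated weights, so the appeal to window independence and the general $\mascP _E$ case require the extra justification you yourself flag.)

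By contrast, the route you keep in reserve is precisely the paper's proof of Theorem \ref{Thm:MultMod2}: the multiplication theorems are obtained as Fourier transformations of the convolution theorems, since $\mascF$ is a homeomorphism from $M^{p,q}_{(\omega )}(\rr d)$ onto $W^{q,p}_{(\omega _0)}(\rr d)$ with $\omega _0(x,\xi )=\omega (-\xi ,x)$, and \eqref{Eq:FourTransfConv} turns $f_1f_2$ into $\widehat f_1*\widehat f_2$; under the swap $p\leftrightarrow q$ the hypotheses \eqref{Eq:LebExpHolderYoung2} and \eqref{Eq:WeightCondHolderMod} become \eqref{Eq:LebExpYoungHolder2} and \eqref{Eq:WeightCondYoungMod}, so Theorem \ref{Thm:MultMod2} follows from Theorem \ref{Thm:ConvMod1}, whose proof (given in the paper in the unweighted case, and in \cite{Toft26} with weights) is the amalgam argument sketched above. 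If you promote that reduction to your main argument, or replace the false lattice-sampling identity by the local-sup domination, the proposal becomes complete; as written, the central estimate does not go through.
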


\par

The corresponding results for convolutions are the following. Here
the conditions on the involved Lebesgue exponents are swapped as
\begin{alignat}{2}
\frac 1{p_0} &\le \frac 1{p_1}+\frac 1{p_2} -
\max \left (
1,\frac 1{q_0},\frac 1{p_1},\frac 1{p_2}
\right ), &
\qquad
\frac 1{q_0} &\le \frac 1{q_1}+\frac 1{q_2}
\label{Eq:LebExpYoungHolder1}
\intertext{or}
\frac 1{p_0} &\le \frac 1{p_1}+\frac 1{p_2} -
\max \left (
1,\frac 1{p_1},\frac 1{p_2}
\right ), &
\qquad
\frac 1{q_0} &\le \frac 1{q_1}+\frac 1{q_2}
\label{Eq:LebExpYoungHolder2}
\end{alignat}

\par

\begin{thm}\label{Thm:ConvMod1}
Let $p_j, q_j\in (0,\infty )$ and $\omega _j\in \mascP _E(\rr {2d})$, $j=0,1,2$,
be such that \eqref{Eq:WeightCondYoungMod}
and \eqref{Eq:LebExpYoungHolder2} hold. Then
$(f_1,f_2)\mapsto f_1*f_2$
from $\Sigma _1 (\rr d)\times \Sigma _1(\rr d)$
to $\Sigma _1 (\rr d)$ is uniquely extendable to
a continuous map from $M^{p_1,q_1}_{(\omega _1)}(\rr d)
\times M^{p_2,q_2}_{(\omega _2)}(\rr d)$
to $M_{(\omega _0)}^{p_0,q_0}(\rr d)$, and
\begin{equation}\label{Eq:ConvMod1}
\nm {f_1*f_2}{M_{(\omega _0)}^{p_0,q_0}}
\lesssim
\nm {f_1}{M_{(\omega _1)}^{p_1,q_1}}\nm {f_2}{M_{(\omega _2)}^{p_2,q_2}} ,
\quad
f_j\in M_{(\omega _j)}^{p_j,q_j}(\rr d),\ j=1,2 \text .
\end{equation}
\end{thm}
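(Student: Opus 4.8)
The plan is to establish \eqref{Eq:ConvMod1} (in the unweighted case, as announced in the introduction) first for $f_1,f_2\in\Sigma_1(\rr d)$ and then to extend it by continuity, using that $\Sigma_1(\rr d)$ is dense in $M^{p_j,q_j}(\rr d)$ since $p_j,q_j<\infty$ (Remark \ref{Rem:ModSpaces}). As modulation spaces increase with the Lebesgue exponents, it suffices to treat the case of equality in \eqref{Eq:LebExpYoungHolder2}. I would fix $s>1$ and a Gabor pair $\phi,\psi\in\Sigma_s(\rr d)$ with values in $[0,1]$ satisfying \eqref{Eq:phiProp}, \eqref{Eq:psiProp} and \eqref{Eq:PartUnity1}. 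Since \eqref{Eq:SubexpMod} holds trivially for $\omega\equiv1$, Proposition \ref{Prop:GaborExpMod} applies, so that every modulation (quasi-)norm below is equivalent to the $\ell^{p,q}$-norm over the lattice $\Lambda=\zz d\times\pi\zz d$ of \eqref{Eq:OurLattice} of the sampled short-time Fourier transform with window $\phi$ or $\psi$.

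Next I would insert the Gabor expansion \eqref{Eq:SpecGaborExp2} of both factors into $f_1*f_2$; with $\phi_{(j,\iota)}(x)=\phi(x-j)e^{i\scal x\iota}$ this yields
\begin{equation*}
f_1*f_2=\Big(\tfrac\pi2\Big)^{d}\sum_{\mabfj,\mabfk\in\Lambda}
V_\psi f_1(\mabfj)\,V_\psi f_2(\mabfk)\,\big(\phi_{\mabfj}*\phi_{\mabfk}\big),
\end{equation*}
with convergence in $\Sigma_s'(\rr d)$. A short computation shows that $\phi_{(j,\iota)}*\phi_{(k,\kappa)}$ equals, up to a unimodular constant, the time-frequency shift by $(j+k,\kappa)$ of the function $\Phi_{\iota-\kappa}$, where $\Phi_\mu:=(e^{i\scal\cdo\mu}\phi)*\phi$ is supported in $[-\tfrac32,\tfrac32]^d$. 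A direct Fourier-side estimate of $V_\psi\Phi_\mu$, relying on $\phi,\psi\in\Sigma_s(\rr d)$ and the characterization \eqref{Eq:GSFtransfChar}, then gives, uniformly in $\mu$,
\begin{equation*}
|V_\psi\Phi_\mu(X,\Xi)|\lesssim e^{-r(|X|^{1/s}+|\Xi|^{1/s}+|\mu|^{1/s})},\qquad r>0;
\end{equation*}
the gain over the Schwartz case is the subexponential decay in the frequency offset $\mu=\iota-\kappa$. Writing $a_i$ for the restriction of $|V_\psi f_i|$ to $\Lambda$ and $\theta(t)=e^{-r|t|^{1/s}}$, one arrives at
\begin{equation*}
|V_\psi(f_1*f_2)(x,\xi)|\lesssim \sum_{\iota,\kappa\in\pi\zz d}\theta(\iota-\kappa)
\sum_{j,k\in\zz d}a_1(j,\iota)\,a_2(k,\kappa)\,\theta(x-j-k)\,\theta(\xi-\kappa).
\end{equation*}

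It then remains to bound the $\ell^{p_0,q_0}(\Lambda)$-norm of the right-hand side, which dominates $\nm{f_1*f_2}{M^{p_0,q_0}}$ by Proposition \ref{Prop:GaborExpMod}, by $\nm{f_1}{M^{p_1,q_1}}\nm{f_2}{M^{p_2,q_2}}$. In the $x$-variable the inner double sum is the discrete convolution $a_1(\cdo,\iota)*a_2(\cdo,\kappa)*\theta$ on $\zz d$, so Proposition \ref{Prop:HolderYoungDiscrLebSpaces}(2) with the exponent relation \eqref{Eq:LebExpYoungHolder2} (the convolution against $\theta$ being harmless, since $\theta\in\ell^{r}(\zz d)$ for every $r>0$) yields $\nm{a_1(\cdo,\iota)*a_2(\cdo,\kappa)*\theta}{\ell^{p_0}}\lesssim\nm{a_1(\cdo,\iota)}{\ell^{p_1}}\nm{a_2(\cdo,\kappa)}{\ell^{p_2}}$. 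Setting $\alpha_i(\kappa)=\nm{a_i(\cdo,\kappa)}{\ell^{p_i}}$, summing over $\iota$ (again a convolution with $\theta$), applying H\"older's inequality \eqref{Eq:HolderEst} in $\kappa$ with $1/q_1+1/q_2=1/q_0$, and finally taking the $\ell^{q_0}$-norm in $\xi$ (once more a harmless convolution with $\theta$), one obtains $\nm{f_1*f_2}{M^{p_0,q_0}}\lesssim\nm{\alpha_1}{\ell^{q_1}}\nm{\alpha_2}{\ell^{q_2}}=\nm{f_1}{M^{p_1,q_1}}\nm{f_2}{M^{p_2,q_2}}$, which is \eqref{Eq:ConvMod1}. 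For weighted $\omega_j$ one argues identically, transferring the weights to $\Lambda$ via \eqref{Eq:WeightCondYoungMod}, provided the $\omega_j$ satisfy a subexponential bound so that \eqref{Eq:SubexpMod} and Proposition \ref{Prop:GaborExpMod} stay available.

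The step I expect to be the main obstacle is the range $p_1<1$ or $p_2<1$. There the continuous Young inequality --- which, via \eqref{Eq:ConvDefFormula1Mod}, would settle the matter at once when $p_0,p_1,p_2\ge1$ --- fails, so passing to Gabor coefficients is essential: the discrete Young inequality of Proposition \ref{Prop:HolderYoungDiscrLebSpaces}(2) still holds below the Banach range thanks to the $p$-quasi-triangle inequality. The accompanying difficulty is the bookkeeping of the off-diagonal terms $\phi_{\mabfj}*\phi_{\mabfk}$ with $\iota\neq\kappa$, whose smallness is supplied by the Gelfand--Shilov regularity of the atoms; Schwartz atoms would give only polynomial decay in $\mu$, which need not be summable against $\ell^{p}$-sequences for small $p$.
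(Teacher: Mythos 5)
Your argument is correct, but it follows a genuinely different route from the paper's. The paper never expands $f_1,f_2$ in Gabor series: it fixes windows with $\phi _0=(2\pi )^{d/2}\phi _1*\phi _2$, uses the exact identity \eqref{Eq:ConvDefFormula1Mod}, by which $V_{\phi _0}(f_1*f_2)(\cdo ,\xi )$ is the convolution in the $x$-variable of $V_{\phi _1}f_1(\cdo ,\xi )$ and $V_{\phi _2}f_2(\cdo ,\xi )$, dominates this continuous convolution by a discrete convolution of the sequences of local sup-norms over unit cubes (the amalgam norms $\sfW (\ell ^{p,q})$ of Appendix \ref{App:A}), applies Proposition \ref{Prop:HolderYoungDiscrLebSpaces}, and then returns to $M^{p_0,q_0}$ through the boundedness of $P_{\phi _0}$ and $V_{\phi _0}^*$ on $\sfW (\ell ^{p,q})$ (Proposition \ref{Prop:ProjMapModCont}). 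You instead expand both factors by \eqref{Eq:SpecGaborExp2}, identify $\phi _{\mabfj}*\phi _{\mabfk}$ (correctly) as a time--frequency shift by $(j+k,\kappa )$ of $\Phi _{\iota -\kappa}$, and exploit the almost-diagonalization bound on $V_\psi \Phi _\mu$ together with the sampling characterization in Proposition \ref{Prop:GaborExpMod}; the discrete Young/H{\"o}lder input is then the same as in the paper. What your route buys: no need for the amalgam projection result nor for the special window relation \eqref{Eq:ConvDefFormula2A}. What it costs: the atom-convolution estimate and the off-diagonal bookkeeping --- in particular, when $p_0<1$ or $q_0<1$ the infinite sum over $(\iota ,\kappa )$ must be passed through the $\rho$-triangle inequality with $\rho =\min (1,p_0,q_0)$ (your ``harmless convolution with $\theta$'', legitimate since $\theta ^\rho$ is still summable) before the norms are taken; this is routine but should be written out. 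Two small corrections: your closing claim that Schwartz atoms would be insufficient is not accurate in the unweighted case, since $|V_\psi \Phi _\mu (X,\Xi )|\lesssim _N \eabs X^{-N}\eabs \Xi ^{-N}\eabs \mu ^{-N}$ for every $N$, and $N>d/\rho$ already gives $\ell ^{\rho}$-summability; the Gelfand--Shilov decay becomes essential only for subexponentially growing weights. Relatedly, your restriction to weights satisfying \eqref{Eq:SubexpMod} (so that Proposition \ref{Prop:GaborExpMod} applies) is a genuine limitation of the Gabor-expansion route for general $\omega _j\in \mascP _E$, but you flag it honestly, and the paper itself only proves the unweighted case, deferring the full statement to \cite{Toft26}.
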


\par

\begin{thm}\label{Thm:ConvMod2}
Let $p_j, q_j\in (0,\infty )$ and $\omega _j\in \mascP _E(\rr {2d})$, $j=0,1,2$,
be such that \eqref{Eq:WeightCondYoungMod}
and \eqref{Eq:LebExpYoungHolder1} hold. Then
$(f_1,f_2)\mapsto f_1*f_2$
from $\Sigma _1 (\rr d)\times \Sigma _1(\rr d)$
to $\Sigma _1 (\rr d)$ is uniquely extendable to
a continuous map from $W^{p_1,q_1}_{(\omega _1)}(\rr d)
\times W^{p_2,q_2}_{(\omega _2)}(\rr d)$
to $W_{(\omega _0)}^{p_0,q_0}(\rr d)$, and
\begin{equation}\label{Eq:ConvMod2}
\nm {f_1*f_2}{W_{(\omega _0)}^{p_0,q_0}}
\lesssim
\nm {f_1}{W_{(\omega _1)}^{p_1,q_1}}\nm {f_2}{W_{(\omega _2)}^{p_2,q_2}} ,
\quad
f_j\in W_{(\omega _j)}^{p_j,q_j}(\rr d),\ j=1,2 \text .
\end{equation}
\end{thm}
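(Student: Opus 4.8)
\textbf{Proof plan for Theorem \ref{Thm:ConvMod2}.}
The plan is to follow the same strategy as for Theorem \ref{Thm:ConvMod1}, but tracking the Wiener amalgam ordering of the norms throughout. First I would reduce to the Gabor side: choosing $s>1$ small enough that \eqref{Eq:SubexpMod} holds simultaneously for $\omega _0,\omega _1,\omega _2$ (possible since all three are moderate with respect to a common submultiplicative weight, hence subexponential), pick $\phi ,\psi \in \Sigma _s(\rr d)$ as in \eqref{Eq:phiProp}--\eqref{Eq:PartUnity1}. By the density of $\Sigma _1(\rr d)$ in each $W^{p_j,q_j}_{(\omega _j)}(\rr d)$ with $p_j,q_j<\infty$ (Remark \ref{Rem:ModSpaces}), it suffices to prove the estimate \eqref{Eq:ConvMod2} for $f_1,f_2\in \Sigma _1(\rr d)$; uniqueness of the extension is then automatic. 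For such $f_j$, one uses the representation \eqref{Eq:ConvDefFormula1Mod}: with $\phi _0=(2\pi )^{d/2}\phi _1*\phi _2$ and $F_j=V_{\phi _j}f_j$ one has $F_0(x,\xi )=(F_1(\cdot ,\xi )*F_2(\cdot ,\xi ))(x)$, and $f_1*f_2=(\nm{\phi _0}{L^2})^{-1}V^*_{\phi _0}F_0$ by \eqref{Eq:f0Extract}. (When $\phi _1*\phi _2\equiv 0$ one instead notes the relation holds trivially after a harmless change of window, or rather one simply picks $\phi _1,\phi _2$ so that $\phi _0\ne 0$, which is possible in $\Sigma _1$.)

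The core is then a discrete estimate. By Proposition \ref{Prop:GaborExpMod} applied with the $W$-type spaces, it is enough to bound $\nm{V_\phi (f_1*f_2)}{\ell ^{p_0,q_0}_{*,(\omega _0)}(\Lambda )}$ by the product of $\nm{V_\phi f_j}{\ell ^{p_j,q_j}_{*,(\omega _j)}(\Lambda )}$ up to a constant, where $\Lambda =\zz d\times \pi \zz d$. Using the reproducing formula \eqref{Eq:STFTWindTrans} (with windows in $\maclS _s$, via Remark \ref{Rem:SchwartzToGS}), $V_\phi (f_1*f_2)$ is, up to a constant and a fixed twisted-convolution kernel coming from $V_{\phi _0}\phi$, controlled pointwise on $\Lambda$ by a genuine (non-twisted, after dominating the oscillatory phase by its modulus) convolution over $\Lambda $ of $|V_\phi f_1|$ and $|V_\phi f_2|$ against a rapidly decaying — here subexponentially decaying, since the windows lie in $\Sigma _s$ — master kernel $H$ on $\Lambda $. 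The $x$-variable plays the role of the convolution variable and the $\xi $-variable the role of the pointwise (Hölder) variable, which is exactly why the exponents in \eqref{Eq:LebExpYoungHolder1} have the Young ordering in $p$ and the Hölder ordering in $q$, and why the $W$-type (inner $\ell ^p$, outer $\ell ^q$) norm is the natural one for the conclusion. Once this domination is in place, apply Proposition \ref{Prop:HolderYoungDiscrLebSpaces}: Young's inequality \eqref{Eq:YoungEst} in the $x$-slot and Hölder's inequality \eqref{Eq:HolderEst} in the $\xi $-slot, interlaced as a mixed-norm estimate, with the weight hypotheses \eqref{Eq:WeightCondYoungMod} (sub-convolutive in $x$, sub-multiplicative in $\xi $) guaranteeing \eqref{Eq:WeightCondYoung} and \eqref{Eq:WeightCondHolder} for the induced weights on $\Lambda $, after absorbing the extra factors $e^{r(|x|^{1/s}+\cdots )}$ from \eqref{Eq:SubexpMod} into the summable master kernel $H$. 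Summing the finitely many contributions (or convolving the kernel once more) closes the estimate.

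The main obstacle is the quasi-Banach range $p_j,q_j<1$. There Young's and Hölder's inequalities on $\ell ^{p}$ are not the convex ones: Proposition \ref{Prop:HolderYoungDiscrLebSpaces}(2) forces the loss $\max(1,1/p_1,1/p_2)$ (and its $q_0$-variant the extra $1/q_0$), which is precisely the origin of the $\max$-correction in \eqref{Eq:LebExpYoungHolder1}. So the delicate point is not the kernel estimate but verifying that, in each of the subcases of \eqref{Eq:LebExpYoungHolder1} ($p_1,p_2\ge 1$ versus at least one $<1$; likewise for $q_0$), the discrete exponents one feeds into Proposition \ref{Prop:HolderYoungDiscrLebSpaces} satisfy \eqref{Eq:LebExpHolder} and \eqref{Eq:LebExpYoung}, and that the non-convex triangle inequality $\nm{\sum a_\nu }{\ell ^r}^{\min(1,r)}\le \sum \nm{a_\nu }{\ell ^r}^{\min(1,r)}$ used when splitting the kernel over finitely many shifts does not destroy the final bound — it does not, because only finitely many shifts occur and the kernel is summable in every $\ell ^r$, $r>0$. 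I would also need to check that all window manipulations (the reproducing identity, the adjoint formula \eqref{Eq:STFTAdjointFormula}, the boundedness of $V^*_{\phi _0}$) remain valid with $\Sigma _s$-windows and on $\Sigma _s'$-distributions, which is granted by Remark \ref{Rem:SchwartzToGS}; this is routine but must be invoked. The rest is the bookkeeping of mixed norms already carried out in the proof of Theorem \ref{Thm:ConvMod1}, which I would cite rather than repeat.
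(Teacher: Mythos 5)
Your overall scheme --- discretize, then Young in the $x$-slot and H{\"o}lder in the $\xi$-slot --- has the right shape and is essentially the route the paper takes (for this theorem the paper only cites \cite{Toft26}, writing out in full just the unweighted case of Theorem \ref{Thm:ConvMod1}). But one step of your reduction is wrong in the stated generality. You claim that every $\omega\in\mascP _E(\rr {2d})$ is moderated by a subexponential weight, so that \eqref{Eq:SubexpMod} holds for some $s>1$. This is false: moderateness only gives the exponential bound \eqref{Eq:weight0}, and by \eqref{Eq:SomeSubmultWeights} the weight $e^{r|x|}$ belongs to $\mascP _E\setminus \mascP _E^0$, so no choice of $s>1$ makes \eqref{Eq:SubexpMod} hold for it. Consequently the compactly supported $\Sigma _s$-windows of \eqref{Eq:phiProp}--\eqref{Eq:PartUnity1} and Proposition \ref{Prop:GaborExpMod} are unavailable for general $\omega _j\in \mascP _E(\rr {2d})$, and your absorption step breaks down as well: with windows in $\Sigma _s$, $s>1$, the ``master kernel'' decays only like $e^{-c(|x|^{1/s}+|\xi |^{1/s})}$ and cannot absorb exponentially growing moderating factors. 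To cover all of $\mascP _E$ one argues as in the paper's proof of Theorem \ref{Thm:ConvMod1} and in \cite{Toft26}: take windows in $\Sigma _1$ (e.g. Gaussians), whose short-time Fourier transforms decay like $e^{-c(|x|+|\xi |)}$ for every $c>0$, discretize by local suprema over unit cubes (the spaces $\sfW (\omega ,\ell ^{p,q}_*)$ of Appendix \ref{App:A}), and return to $W^{p_0,q_0}_{(\omega _0)}(\rr d)$ via \eqref{Eq:ModWienerConn1} and the weighted counterpart of Proposition \ref{Prop:ProjMapModCont} (cf. \cite{Toft26}); no lattice Gabor expansion is needed.

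A second point is the actual origin of the extra $1/q_0$ in \eqref{Eq:LebExpYoungHolder1}, which you attribute to a ``$q_0$-variant'' of Proposition \ref{Prop:HolderYoungDiscrLebSpaces}(2); no such variant is stated, and the loss does not come from that proposition. In the $W$-ordering the $\ell ^{q_0}$-norm in $\xi$ is the inner one while the convolution runs over the outer variable $x$, so before H{\"o}lder can be applied one must pull the $x$-convolution sum through the inner $\ell ^{q_0}$-quasi-norm in $\xi$. For $q_0\ge 1$ this is Minkowski's inequality at no cost; for $q_0<1$ one raises to the power $q_0$, applies H{\"o}lder in $\xi$ termwise, and then applies Young's inequality \eqref{Eq:YoungEst} to the sequences $A_j(k)=\nm {a_j(k,\cdot )}{\ell ^{q_j}}$ raised to the power $q_0$, i.e. with exponents $p_j/q_0$; the condition \eqref{Eq:LebExpYoung} for these rescaled exponents is exactly what converts $\max (1,1/p_1,1/p_2)$ into $\max (1,1/q_0,1/p_1,1/p_2)$. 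Without making this interchange explicit, the ``interlaced'' mixed-norm estimate does not close; the quasi-triangle inequality you do discuss (finitely many shifts of the kernel) is the harmless part. With these two repairs the remaining bookkeeping goes through as in the proof of Theorem \ref{Thm:ConvMod1}.
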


\par

We observe that Theorems 3.2--3.5 in \cite{Toft26} are
multi-linear
versions of the previous results. In particular,
Theorems \ref{Thm:MultMod1}--\ref{Thm:ConvMod2}
(which are special cases of Theorems 3.2--3.5 in \cite{Toft26})
are Fourier transformations of Theorems \ref{Thm:ConvMod1} and \ref{Thm:ConvMod2}.
Hence it suffices to prove the last two theorems, cf.  \cite{Toft26}.
To shed some ideas of the arguments, we give a proof
in the unweighted case of Theorem \ref{Thm:ConvMod1}.
We will use Proposition \ref{Prop:ProjMapModCont}
from Appendix \ref{App:A}, which is a
special case of \cite[Proposition 3.6]{Toft26}.

\par



\par

\begin{proof}[Proof of Theorem \ref{Thm:ConvMod1}]
%
%
Suppose
$f_j\in \mascS (\rr d)$,
$\phi _j\in \mascS (\rr d)$, $j=0,1,2$ be such that
$$
f_0=f_1*f_2
\quad \text{and}\quad
\phi _0=(2\pi) ^{\frac d2}\phi _1*\phi _2\neq 0,
$$
and let $F_j$ be the same as in \eqref{Eq:MultDefFormula2}.
Then
$$
F_0(x,\xi ) = (V_{\phi _1}f_1(\cdo ,\xi )*V_{\phi _2}f_2(\cdo ,\xi ))(x),
$$
in view of \eqref{Eq:ConvDefFormula1Mod}.

\par

We have
$$
0\le \chi _{k_1+Q}*\chi _{k_2+Q} \le \chi _{k_1+k_2+Q_{d,2}},
\qquad
k_1,k_2\in \zz d,
$$
where $Q_{d,r}$ is the cube
$$
Q_{d,r} =[0,r]^d
\quad \text{and}\quad
Q= Q_{d,1}= [0,1]^d,
$$
and $\chi _E$ is the characteristic function with respect to
the set $E$.

\par

Set
\begin{align*}
G(x,\xi ) &= (|V_{\phi _1}f_1(\cdo ,\xi )|*|V_{\phi _2}f_2(\cdo ,\xi )|)(x),
\\[1ex]
a_j(k,\kappa ) &= \nm {V_{\phi _j}f_j}{L^\infty ((k,\kappa )+Q_{2d,1})}, \quad j=1,2,
\intertext{and}
b(k,\kappa ) &=  \nm {G}{L^\infty ((k,\kappa )+Q_{2d,1})}
\end{align*}
Then
\begin{alignat}{2}
\nm {V_{\phi _0}^*F_0}{M^{p_0,q_0}}
&\asymp
\nm {P_{\phi _0}F_0}{\sfW (\ell ^{p_0,q_0})}
\lesssim
\nm {F_0}{\sfW (\ell ^{p_0,q_0})}
\notag
\\[1ex]
&\le
\nm {G}{\sfW (\ell ^{p_0,q_0})}
\asymp
\nm b{\ell ^{p_0,q_0}},
\label{Eq:ModCoorbitNormRel}
\intertext{and}
\nm {f_j}{M^{p_j,q_j}} &\asymp \nm {a_j}{\ell ^{p_j,q_j}}
\label{Eq:ModCoorbitNormRel2}
\end{alignat}
in view of \eqref{Eq:ModWienerConn1} and
Proposition \ref{Prop:ProjMapModCont} in Appendix
\ref{App:A}
(see also \cite[Theorem 3.3]{GaSa})).

\par

By \eqref{Eq:ConvDefFormula1Mod} we have
\begin{equation}\label{Eq:G(x,lambda)Est}
\begin{aligned}
G(x,\lambda )
&\le
\sum _{k_1,k_2\in \zz d} a_1(k_1,\lambda )a_2(k_2,\lambda )
(\chi _{k_1+Q}*\chi _{k_2+Q})(x)
\\[1ex]
&\le
\sum _{k_1,k_2\in \zz d} a_1(k_1,\lambda )a_2(k_2,\lambda )
\chi _{k_1+k_2+Q_{d,2}}(x).
\end{aligned}
\end{equation}
We observe that
$$
\chi _{k_1+k_2+Q_{d,2}}(x)=0
\quad \text{when}\quad
x\notin l+Q_d,\ (k_1,k_2)\notin \Omega _l,
$$
where
$$
\Omega _l
=
\sets {(k_1,k_2)\in \zz {2d}}{l_j-2 \le k_{1,j} +k_{2,j}\le l_j+1},
$$
and
$$
k_j=(k_{j,1},\dots ,k_{j,d})\in \zz d,
\qquad j=1,2,
\quad \text{and}\quad
l=(l_1,\dots ,l_d)\in \zz d.
$$
Hence, if $x=l$ in \eqref{Eq:G(x,lambda)Est}, we get
\begin{multline}\label{Eq:GExprEst}
b(l,\lambda )
\le
\sum _{(k_1,k_2)\in \Omega _l} a_1(k_1,\lambda )a_2(k_2,\lambda )
\\[1ex]
\le
\sum _{m\in I}(a_1(\cdo ,\lambda )*a_2(\cdo ,\lambda ))(l-2e_0+m),
\end{multline}
where $e_0=(1,\dots ,1)\in \zz d$ and $I = \{ 0,1,2,3\} ^d$.

\par

If we apply the $\ell ^{p_0}$ quasi-norm on \eqref{Eq:GExprEst}
with respect to the $l$ variable, then Proposition
\ref{Prop:HolderYoungDiscrLebSpaces} (2) and the fact that
$I$ is finite set give
\begin{multline*}
\nm {b(\cdo ,\lambda )}{\ell ^{p_0}}
\le
\NM {\sum _{m\in I} ( a_{1}(\cdo ,\lambda )
*a_{2}(\cdo ,\lambda ))
(\cdo -2e_0+m)}{\ell ^{p_0}}
\\[1ex]
\le
\sum _{m\in I} \nm {( a_{1}(\cdo ,\lambda )
*a_{2}(\cdo ,\lambda ))
(\cdo -2e_0+m)}{\ell ^{p_0}}
\\[1ex]
\asymp
\nm {a_{1}(\cdo ,\lambda )
*a_{2}(\cdo ,\lambda )}{\ell ^{p_0}}
\\[1ex]
\le
\nm {a_{1}(\cdo ,\lambda )}{\ell ^{p_1}}
\nm {a_{2}(\cdo ,\lambda )}{\ell ^{p_2}}.
\end{multline*}
By applying the $\ell ^{q_0}$ quasi-norm and using
Proposition
\ref{Prop:HolderYoungDiscrLebSpaces} (1)
we now get
$$
\nm {b}{\ell ^{p_0,q_0}}
\lesssim
\nm {a_{1}}{\ell ^{p_1,q_1}}\nm {a_{2}}{\ell ^{p_2,q_2}}.
$$
This is the same as
$$
\nm {G}{L^{p_0,q_0}}
\lesssim
\nm {F_1}{L^{p_1,q_1}}\nm {F_2}{L^{p_2,q_2}}.
$$
A combination of this estimate with \eqref{Eq:ModCoorbitNormRel} and
\eqref{Eq:ModCoorbitNormRel2} gives that $f_1*f_2$ is well-defined
and that \eqref{Eq:ConvMod1} holds.

\par

The uniqueness now follows from that \eqref{Eq:ConvMod1}
holds for $f_1,f_2\in \mascS (\rr d)$, and that
$\mascS (\rr d)$ is dense in $M^{p,q}(\rr d)$ when
$p,q<\infty$.
\end{proof}

\par

\section{Gabor products and modulation spaces}\label{Sec:3}

\par

In this section we give an illustration how the multiplication
properties for modulation spaces can be used when treating
certain nonlinear problems. We consider the Gabor product
which is connected to such multiplication properties.
It is introduced in \cite{DPT2022} in order
to derive a phase space analogue to the usual
convolution identity for the Fourier transform
\eqref{Eq:FourTransfConv}.
We will prove a formula related to
\eqref{Eq:MultDefFormula1Mod}, and then use results
from previous section to extend the
Gabor product initially defined on
$ M^1 (\mathbf{R}^{2d}) \times  M^1 (\mathbf{R}^{2d}) $
to some other spaces. Finally, we show how the Gabor product
gives rise to a phase-space formulation of the qubic
Schr{\"o}dinger equation.



\par

\begin{defn}\label{definitionGaborProduct}
Let $\phi \in M^1 (\mathbf{R}^{d}) \setminus 0$,
and let $F_1 , F_2 \in M^1 (\rr {2d})$.
Then the  Gabor product $\natural_{\phi}$
is given by
\begin{multline}
\label{eqWindowedProduct1}
\left(F_1 \natural_{\phi}\,  F_2 \right) (x, \xi)
\\[1ex]
=
(2\pi )^{-d}
e^{-i\scal x \xi } \iiint_{\rr {3d} }
\overline{\widehat{\phi}
\left(\zeta -\xi \right)}
e^{i\scal x\zeta} F_1 (y,\eta ) F_2
(y, \zeta - \eta )\,  dyd\eta d\zeta .
\end{multline}
\end{defn}

\par

In the proof of \cite[Lemma 13]{DPT2022} it is justified that
the Gabor product  in  \eqref{eqWindowedProduct1} is well-defined,
and  that
$$
\natural _\phi  : M^1 (\mathbf{R}^{2d})\times M^1
(\mathbf{R}^{2d}) \to M^1 (\mathbf{R}^{2d})
$$
is a continuous map.

\par

The Gabor product is particularly well-suited in the
context of the STFT.

\begin{thm}
Let $\phi , \phi _1, \phi _2
\in
M^1 (\mathbf{R}^{d}) \backslash \left\{0 \right\}$.
Then
\begin{equation}
( \phi _2,\phi _1 )_{L^2 (\rr d)}
V_{\phi} (f_1 \cdot f_2) =   (V_{\phi _1}
f_1) \natural_{\phi} (V_{\overline \phi _2} f_2),
\quad f_1, f_2 \in  M^1 (\mathbf{R}^{d}).
\label{eqWindowedProduct2}
\end{equation}
Moreover, $ V_{g} (f_1 \cdot f_2)  \in M^1 (\mathbf{R}^{2d})$.
\end{thm}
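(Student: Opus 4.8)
The plan is to prove \eqref{eqWindowedProduct2} first on the test-function level — say for $f_1,f_2\in\Sigma_1(\rr d)$ and $\phi,\phi_1,\phi_2\in\Sigma_1(\rr d)\setminus 0$, where each short-time Fourier transform occurring below belongs to $\Sigma_1(\rr{2d})$ (Remark~\ref{Rem:SchwartzToGS}), so that every integral converges absolutely and Fubini applies — and then to pass to $f_1,f_2\in M^1(\rr d)$ and $\phi,\phi_1,\phi_2\in M^1(\rr d)\setminus 0$ by density and continuity. For the latter I would invoke: density of $\Sigma_1(\rr d)$ in $M^1(\rr d)$ (Remark~\ref{Rem:ModSpaces}); continuity of multiplication $M^1(\rr d)\times M^1(\rr d)\to M^1(\rr d)$ (Theorem~\ref{Thm:MultMod1} with all exponents $1$); continuity of $(f,\psi)\mapsto V_\psi f$ from $M^1(\rr d)\times M^1(\rr d)$ into $M^1(\rr{2d})$ (a standard property of the Feichtinger algebra, cf.~\cite[Lemma~11]{DPT2022} and \cite{Jakobsen}); and continuity of $(\phi,F_1,F_2)\mapsto F_1\natural_\phi F_2$ from $M^1(\rr d)\times M^1(\rr{2d})\times M^1(\rr{2d})$ into $M^1(\rr{2d})$, which one reads off from \eqref{eqWindowedProduct1} together with the continuity of $\mascF$ on $M^1$. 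As $M^1(\rr{2d})$ embeds continuously into $C_b(\rr{2d})$, agreement on the dense set then gives the identity pointwise, for all admissible arguments.

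For the core computation, insert the integral representation \eqref{Eq:STFTDef}$''$ of $V_{\phi_1}f_1$ and of $V_{\overline{\phi_2}}f_2$ into the right-hand side of \eqref{eqWindowedProduct1}, noting that the inner window of the second factor is $\overline{\phi_2}$, whose conjugate returns $\phi_2(\cdo-y)$. I would then integrate in the order $\eta$, $y$, $\zeta$. The $\eta$-integration is the partial convolution $\bigl(V_{\phi_1}f_1(y,\cdo)*V_{\overline{\phi_2}}f_2(y,\cdo)\bigr)(\zeta)$ in the frequency slot and, exactly as in the derivation of \eqref{Eq:MultDefFormula1Mod}, produces a $\delta$-factor identifying the two ``time'' variables of $f_1$ and $f_2$, leaving $\int f_1(t)f_2(t)\overline{\phi_1(t-y)}\phi_2(t-y)e^{-i\scal t\zeta}\,dt$. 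The $y$-integration meets only $\overline{\phi_1(t-y)}\phi_2(t-y)$, and the substitution $u=t-y$ turns $\int\overline{\phi_1(t-y)}\phi_2(t-y)\,dy$ into the constant $\int\overline{\phi_1(u)}\phi_2(u)\,du=(\phi_2,\phi_1)_{L^2(\rr d)}$. The $\zeta$-integration is then a Fourier inversion: after the shift $\zeta\mapsto\zeta-\xi$ one recognizes $\int\overline{\widehat\phi(\zeta)}e^{i\scal{x-t}{\zeta}}\,d\zeta=(2\pi)^{d/2}\overline{(\mascF^2\phi)(x-t)}=(2\pi)^{d/2}\overline{\phi(t-x)}$ (using $\mascF^2=$ parity), and the surviving phases combine to $e^{-i\scal t\xi}$, cancelling the prefactor $e^{-i\scal x\xi}$ of \eqref{eqWindowedProduct1}. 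Collecting the powers of $2\pi$, the outcome is $(\phi_2,\phi_1)_{L^2}\,(2\pi)^{-d/2}\int f_1(t)f_2(t)\overline{\phi(t-x)}e^{-i\scal t\xi}\,dt=(\phi_2,\phi_1)_{L^2}\,V_\phi(f_1f_2)(x,\xi)$, again by \eqref{Eq:STFTDef}$''$; this is \eqref{eqWindowedProduct2} on test functions, hence in general after the density step.

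For the ``Moreover'' part, fix a real-valued $\varphi\in\Sigma_1(\rr d)$ with $\nm{\varphi}{L^2}=1$ and apply \eqref{eqWindowedProduct2} with $\phi=g$ and $\phi_1=\phi_2=\varphi$; then $(\phi_2,\phi_1)_{L^2}=1$, $V_{\overline{\phi_2}}f_2=V_\varphi f_2$, so $V_g(f_1f_2)=(V_\varphi f_1)\natural_g(V_\varphi f_2)$. Since $f_1,f_2\in M^1(\rr d)$ and $\varphi\in\Sigma_1(\rr d)\subseteq M^1(\rr d)$, the factors $V_\varphi f_1$ and $V_\varphi f_2$ lie in $M^1(\rr{2d})$, and the boundedness $\natural_g:M^1(\rr{2d})\times M^1(\rr{2d})\to M^1(\rr{2d})$ (recalled after Definition~\ref{definitionGaborProduct}, from \cite[Lemma~13]{DPT2022}) yields $V_g(f_1f_2)\in M^1(\rr{2d})$.

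The step I expect to be the main obstacle is the density/continuity reduction in the first paragraph — one has to make sure that \emph{both} sides of \eqref{eqWindowedProduct2} are genuinely continuous, with values in $M^1(\rr{2d})$, jointly in all five arguments (the left side via Theorem~\ref{Thm:MultMod1} combined with the STFT mapping property on $M^1$, the right side via that property combined with the continuity of $\natural_\phi$). Once this is granted, the identity itself is the elementary, if $2\pi$-bookkeeping-heavy, computation of the second paragraph, whose only delicate point is the Fourier inversion in the $\zeta$-variable.
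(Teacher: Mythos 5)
Your proposal is correct and its core computation coincides with the paper's proof: expand the Gabor product of the two STFTs, reduce the inner $\eta$-convolution to a single integral (the paper does this via Parseval, which is the rigorous form of your $\delta$-factor/Fourier-inversion step), integrate out $y$ to produce $(\phi _2,\phi _1)_{L^2}$, and perform the Fourier inversion in $\zeta$ to recover $\overline{\phi (\cdot -x)}$ and hence $V_\phi (f_1f_2)$, with the same $2\pi$ bookkeeping. The only deviation is cosmetic: the paper carries out these manipulations directly for $M^1$ data (relying on \cite[Lemma 13]{DPT2022} for well-definedness and Fubini), whereas you first work on $\Sigma _1$ and then pass to $M^1$ by density and the standard Feichtinger-algebra continuity properties, which is a valid alternative wrapper around the same argument.
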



\par

\begin{proof}
We have
\begin{align}
((V_{\phi _1}f_1) &\natural _\phi (V_{\overline \phi _2}f_2))(x,\xi )
\\[1ex]
&=
(2\pi )^{-d}e^{-i\scal x\xi}
\iint _{\rr {2d}}
\overline {\widehat \phi (\zeta -\xi )}e^{i\scal x\zeta}G(y,\zeta )\, dyd\zeta ,
\label{Eq:ReformNaturalProd}
\intertext{where}
G(y,\zeta ) &= \int _{\rr d} (V_{\phi _1}f_1)(y,\eta )
(V_{\overline \phi _2}f_2)(y,\zeta -\eta )\, d\eta .
\notag
\end{align}
By Parseval's formula we get
\begin{align*}
G(y,\zeta )
&=
\int _{\rr d} (V_{\phi _1}f_1)(y,\eta )
(V_{\overline \phi _2}f_2)(y,\zeta -\eta )\, d\eta
\\[1ex]
&=
\int _{\rr d} \mascF (f_1\overline {\phi _1(\cdo -y)})(\eta )
\mascF (f_2{\phi _2(\cdo -y)})(\zeta -\eta )\, d\eta
\\[1ex]
&=
(\mascF (f_1\overline {\phi _1(\cdo -y)})\, ,\,
\mascF (\overline {f_2\, \phi _2(\cdo -y)}e^{i\scal \cdo \zeta}) )_{L^2(\rr d)}
\\[1ex]
&=
(f_1\overline {\phi _1(\cdo -y)}\, ,\,
\overline {f_2\, \phi _2(\cdo -y)}e^{i\scal \cdo \zeta} )_{L^2(\rr d)}
\\[1ex]
&=
\int _{\rr d}f_1(z)\overline {\phi _1(z-y)}
f_2(z)\phi _2(z-y)e^{-i\scal z\zeta}\, dz .
\end{align*}

\par

By inserting this into \eqref{Eq:ReformNaturalProd} and using
Fubini's theorem we get
\begin{align*}
((V_{\phi _1}f_1) &\natural _\phi (V_{\overline \phi _2}f_2))(x,\xi )
\\[1ex]
&=
(2\pi )^{-d}e^{-i\scal x\xi}
\iint _{\rr {2d}}
\overline {\widehat \phi (\zeta -\xi )}e^{-i\scal {z-x}\zeta}f_1(z)f_2(z)
H(z)
\, dzd\zeta ,
\intertext{where}
H(z) &=
\int _{\rr d}\phi _2(z-y)\overline {\phi _1(z-y)}\, dy
=
(\phi _2,\phi _1)_{L^2}.
\end{align*}
Hence, by evaluating the integral with respect to $\zeta$, and using
Fourier's inversion formula, we get
\begin{align*}
((V_{\phi _1}f_1) &\natural _\phi ((V_{\overline \phi _2}f_2)))(x,\xi )
\\[1ex]
&=
(2\pi )^{-\frac d2}e^{-i\scal x\xi}(\phi _2,\phi _1)_{L^2}
\int _{\rr {d}}\overline {\phi (z-x)}e^{i\scal {x-z}\xi}f_1(z)f_2(z)
\, dz
\\[1ex]
&=
(\phi _2,\phi _1)_{L^2} V_\phi (f_1f_2)(x,\xi ),
\end{align*}
which gives \eqref{eqWindowedProduct2}, and the result follows.
\end{proof}

\par

The formula \eqref{eqWindowedProduct2} is closely related to
\eqref{Eq:MultDefFormula1Mod}. In fact, the windows
$ \phi_j \in \Sigma _1 (\rr d)$,  $ j=0,1,2$, in
\eqref{Eq:MultDefFormula1Mod} should satisfy
the condition \eqref{Eq:MultDefFormula2A}, while
\eqref{eqWindowedProduct2}
is valid for arbitrary non-zero elements from
$ M^1 (\rr d)$.
For example, when $\phi =\phi _1=\phi _2$
and $\nm \phi{L^2 (\rr d)}=1$, then
\eqref{eqWindowedProduct2} reduces to
\begin{equation}
V_\phi (f_1 \cdot f_2) = (V_\phi f_1) \natural_\phi
(V_\phi f_2), \qquad f_1, f_2 \in  M^1 (\rr d),
\label{eqHomomorphism1}
\end{equation}
while \eqref{Eq:MultDefFormula1Mod} does not
allow such choice of windows.

\par

One of the main goals of \cite{DPT2022} are
extensions of the Gabor product
to some function spaces
$\mathcal{F}_j (\rr {2d})$, $j=0,1,2$, so that
$\natural_\phi$ maps $\mathcal{F}_1 \times \mathcal{F}_2$
into $\mathcal{F}_0$, with:
\begin{equation}
\|F_1 \natural_\phi F_2 \|_{\mathcal{F}_0} \leq C \|F_1\|_{\mathcal{F}_1} \|F_2\|_{\mathcal{F}_2}.
\label{eqIntro6}
\end{equation}
This can be considered as a phase space form of the Young
convolution inequality.

\par

Next we discuss continuity of the Gabor product on
certain spaces involving superpositions
of certain short-time Fourier transforms. In the
end we deduce properties similar to \cite[Theorem 29]{DPT2022}.
Instead of modulation spaces of the form
$M^{p,q}_{(\omega )}(\rr d)$, $p,q\in [1,\infty )$,
$\omega \in \mascP _E(\rr {2d})$,
here we consider modulation spaces of Wiener amalgam
types $W^{p,q}_{(\omega )}(\rr d)$, and allow the
"quasi-Banach" choice for Lebesgue parameters, i.{\,}e.
$p$ and $q$ are allowed to be smaller than one.

\par

Thus, in what follows we assume that
$p,q\in (0,\infty )$, $\omega \in
\mascP _E(\rr {2d})$ is $v$-moderate, and consider
$L^{p,q}_{*,(\omega )}(\rr {2d})$ spaces rather than
$L^{p,q}_{(\omega )}(\rr {2d})$ which are treated in
\cite{DPT2022}.

\par

We need some additional notation.
Let $ s > 1 $, $N\in \mathbf{N}$ be given, and let
$$
\mathscr{G}= \left
\{ \phi _n = \overline{\phi _n}\, ;~ n \in \mathbf{N}
\right \}
\subseteq  \Sigma_s (\rr d),
$$
be an orthonormal basis of $ L^2 (\rr d)$.
Then let
$\mathcal{V}_{\mathscr{G},\omega }^{(N),p,q}(\rr {2d})$
be the closure of
\begin{equation}
{\mathcal V}_{\mathscr{G}}^{(N)} (\rr {2d})
=
\left \{
\sum_{n=1}^N
V_{\phi _n}f_n~;~f_n \in  W^{1,1} _{(v)}(\rr d)
\right \}
\label{eqTheoremDensityWindows1}
\end{equation}
with respect to the $L^{p,q}_{*,(\omega )}(\rr {2d})$ norm.
In particular, if $N=1$, this reduces to
the closure $\overline{P_\phi }(L^{p,q}_{*,(\omega )}(\rr {2d}) )$
of
$$
P_\phi (L^{p,q}_{*,(\omega )}(\rr {2d}) )
= V_\phi (W^{1,1} _{(v)} (\rr d) )
$$
in the $L^{p,q}_{*,(\omega )}(\rr {2d})$ norm.

\par

By \cite[Theorem 26]{DPT2022}, it follows that for every  $ F \in
\mathcal{V}_{\mathscr{G},\omega}^{(N),p,q}(\rr {2d})$
there exist $f_n \in W_{(\omega )}^{p,q}(\rr {d})$,
$ n =1,2,\dots, N$, and such that
\begin{equation}
F = \sum_{n=1}^N V_{\phi _n}f_n~.
\label{eqTheoremDensityWindows2}
\end{equation}

\par

\begin{thm}\label{TheoremModulationSpace4}
Let $p_j, q_j\in (0,\infty )$ and
$\omega _j\in \mascP _E(\rr {2d})$, $j=0,1,2$,
be such that \eqref{Eq:LebExpHolderYoung2} and
\eqref{Eq:WeightCondHolderMod} hold, and let
$\phi \in  \Sigma_s (\rr d) $, $ s > 1 $.
Then the Gabor product $\natural_{\phi}$ from
$ {\mathcal V}_{\mathscr{G}}^{(N)}
\left(\rr {2d}\right) \times
{\mathcal V}_{\mathscr{G}}^{(N)} \left(\rr {2d}\right) $
to $W_{(v)}^{1,1}(\rr {2d})$, extends uniquely to
a continuous map from $\mathcal{V}_{\mathscr{G},\omega_1}^{(N),p_1,q_1}
(\rr {2d})
\times
\mathcal{V}_{\mathscr{G},\omega_2}^{(N),p_2,q_2}(\rr {2d})$ to
$\overline{P_\phi }(L^{p_0,q_0}_{*,(\omega_0 )}(\rr {2d}) )$,
and
\begin{equation}
\|F_1 \natural_{\phi} F_2 \|_{L_{*, (\omega_0)}^{p_0,q_0}}
\lesssim \|F_1 \|_{L_{*, (\omega_1)}^{p_1,q_1}} ~
\|F_2 \|_{L_{*, (\omega_2)}^{p_2,q_2}}~,
\label{eqModulationSpace8.1A}
\end{equation}
for all $F_j \in \mathcal{V}_{\mathscr{G},
\omega_j}^{(N),p_j,q_j}(\mathbf{R}^{2d}) $, $ j =1,2$.

\par

In particular, if $F_j =  V_\phi f_j $, $ j =1,2$, and
$\| \phi \| = 1$, then \eqref{eqModulationSpace8.1A}
reduces to
\begin{equation}
\| V_\phi f_1  \natural_{\phi}  V_\phi f_2  \|
_{L_{*, (\omega_0)}^{p_0,q_0}}=
\|  f_1  f_2  \|_{W_{(\omega _0)}^{p_0,q_0}}
\lesssim \|f_1 \|_{W_{(\omega _1)}^{p_1,q_1}}
~ \|f_2 \|_{W_{(\omega _2)}^{p_2,q_2}}~.
\label{eqModulationSpace8.1B}
\end{equation}
\end{thm}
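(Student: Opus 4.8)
The plan is to reduce the whole statement, via the reproducing identity \eqref{eqWindowedProduct2}, to the multiplication estimate for amalgam modulation spaces in Theorem~\ref{Thm:MultMod2}. First I would work on the dense subspace ${\mathcal V}_{\mathscr{G}}^{(N)}(\rr{2d})$ from \eqref{eqTheoremDensityWindows1}: write $F_j=\sum_{n=1}^N V_{\phi _n}f_{j,n}$ with $f_{j,n}\in W^{1,1}_{(v)}(\rr d)\subseteq M^1(\rr d)$, $j=1,2$, and expand $F_1\natural_\phi F_2$ by bilinearity into $\sum_{n,m}(V_{\phi _n}f_{1,n})\natural_\phi (V_{\phi _m}f_{2,m})$. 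Since $\mathscr{G}$ consists of real-valued functions, $V_{\overline{\phi _m}}f_{2,m}=V_{\phi _m}f_{2,m}$, so \eqref{eqWindowedProduct2} (whose hypotheses are met because $\phi,\phi_n,\phi_m\in\Sigma_s(\rr d)\subseteq M^1(\rr d)$) gives $(V_{\phi _n}f_{1,n})\natural_\phi (V_{\phi _m}f_{2,m})=(\phi _m,\phi _n)_{L^2}\,V_\phi (f_{1,n}f_{2,m})$. Orthonormality of $\mathscr{G}$ collapses the double sum to the diagonal, so that
\[
F_1\natural_\phi F_2=\sum_{n=1}^N V_\phi (f_{1,n}\cdot f_{2,n})\in P_\phi\big(L^{p_0,q_0}_{*,(\omega_0)}(\rr{2d})\big),
\]
which already explains why the target of the extension is $\overline{P_\phi}(L^{p_0,q_0}_{*,(\omega_0)}(\rr{2d}))$.

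Next I would estimate the quasinorm of this finite sum. Using the $r$-triangle inequality with $r=\min(p_0,q_0,1)$ (harmless, since there are only $N$ summands), the identity $\|V_\phi g\|_{L^{p_0,q_0}_{*,(\omega_0)}}=\|g\|_{W^{p_0,q_0}_{(\omega_0)}}$ up to equivalence (window-independence of the amalgam modulation quasinorm, valid for the window $\phi\in\Sigma_s(\rr d)$ under the standing $v$-moderation hypotheses), and Theorem~\ref{Thm:MultMod2}, one obtains
\[
\|F_1\natural_\phi F_2\|_{L^{p_0,q_0}_{*,(\omega_0)}}\lesssim\sum_{n=1}^N\|f_{1,n}f_{2,n}\|_{W^{p_0,q_0}_{(\omega_0)}}\lesssim\sum_{n=1}^N\|f_{1,n}\|_{W^{p_1,q_1}_{(\omega_1)}}\,\|f_{2,n}\|_{W^{p_2,q_2}_{(\omega_2)}},
\]
with constants depending only on $N$ and the exponents. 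It then remains to bound $\|f_{j,n}\|_{W^{p_j,q_j}_{(\omega_j)}}$ by $\|F_j\|_{L^{p_j,q_j}_{*,(\omega_j)}}$. For this I would observe that, by the reproducing formula \eqref{Eq:STFTWindTrans} together with orthonormality of $\mathscr{G}$, one has $P_{\phi_n}V_{\phi_m}g=\delta_{nm}V_{\phi_n}g$, so the coefficient in \eqref{eqTheoremDensityWindows2} is forced to be $f_{j,n}=V_{\phi_n}^*F_j$ and hence $V_{\phi_n}f_{j,n}=P_{\phi_n}F_j$; therefore $\|f_{j,n}\|_{W^{p_j,q_j}_{(\omega_j)}}=\|P_{\phi_n}F_j\|_{L^{p_j,q_j}_{*,(\omega_j)}}\lesssim\|F_j\|_{L^{p_j,q_j}_{*,(\omega_j)}}$, the boundedness of $P_{\phi_n}$ on $L^{p_j,q_j}_{*,(\omega_j)}(\rr{2d})$ — including the quasi-Banach range $p_j,q_j<1$ — being exactly Proposition~\ref{Prop:ProjMapModCont}. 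Summing over $n\le N$ yields \eqref{eqModulationSpace8.1A} on ${\mathcal V}_{\mathscr{G}}^{(N)}\times{\mathcal V}_{\mathscr{G}}^{(N)}$, and a standard density/completeness argument then produces the unique continuous bilinear extension. The final assertion does not use $\mathscr{G}$ at all: \eqref{eqHomomorphism1} gives $V_\phi f_1\natural_\phi V_\phi f_2=V_\phi(f_1f_2)$, so $\|V_\phi f_1\natural_\phi V_\phi f_2\|_{L^{p_0,q_0}_{*,(\omega_0)}}=\|f_1f_2\|_{W^{p_0,q_0}_{(\omega_0)}}$, and \eqref{eqModulationSpace8.1B} follows directly from Theorem~\ref{Thm:MultMod2}.

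The algebraic heart — vanishing of the off-diagonal terms and the diagonal identity — is forced by the fact that $\mathscr{G}$ is orthonormal and real-valued, and the substantive analytic input is imported wholesale: the multiplication bound \eqref{Eq:MultMod2} and, in the non-convex range, the quasi-Banach boundedness of the STFT-projections from Proposition~\ref{Prop:ProjMapModCont}. Accordingly, the step I expect to require the most care is purely bookkeeping in the quasi-Banach setting: checking that every constant produced when manipulating finite sums of quasinorms (from the $r$-triangle inequality and from adding up the $N$ projection estimates) depends only on $N$ and on $p_j,q_j$, and not on $F_1,F_2$, so that the bound is genuinely uniform and the density extension goes through.
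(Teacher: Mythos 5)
Your overall route --- bilinear expansion over the orthonormal, real-valued windows, the identity \eqref{eqWindowedProduct2} to kill the off-diagonal terms and produce $F_1\natural_\phi F_2=\sum_{n\le N}V_\phi(f_{1,n}f_{2,n})$, then Theorem \ref{Thm:MultMod2} on the diagonal and \eqref{eqHomomorphism1} for \eqref{eqModulationSpace8.1B} --- is exactly the intended adaptation of \cite[Theorem 29]{DPT2022}, and that part is fine. The gap is in the component-extraction step. You claim $\nm{f_{j,n}}{W^{p_j,q_j}_{(\omega _j)}}=\nm{P_{\phi _n}F_j}{L^{p_j,q_j}_{*,(\omega _j)}}\lesssim \nm{F_j}{L^{p_j,q_j}_{*,(\omega _j)}}$, attributing boundedness of $P_{\phi _n}$ on $L^{p_j,q_j}_{*,(\omega _j)}$ ``including the quasi-Banach range'' to Proposition \ref{Prop:ProjMapModCont}. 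That proposition does not say this: its domain is the Wiener amalgam space $\sfW (\ell ^{p,q}_*)$, built from local sup-norms, whose quasi-norm dominates the $L^{p,q}_*$ quasi-norm and is strictly stronger when $\min (p,q)<1$. So it only gives $\nm{f_{j,n}}{W^{p_j,q_j}_{(\omega _j)}}\lesssim \nm{F_j}{\sfW (\omega _j,\ell ^{p_j,q_j}_*)}$, which is weaker than what the density argument requires (the closure $\mathcal V^{(N),p_j,q_j}_{\mathscr G,\omega _j}$ is taken in the $L^{p_j,q_j}_{*,(\omega _j)}$ quasi-norm, so the uniform bound on the dense class must have that quasi-norm on the right). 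For $p_j,q_j\ge 1$ your step is correct, since $P_{\phi _n}$ is a twisted convolution with a rapidly decaying kernel and Young's inequality applies; but precisely for $\min (p_j,q_j)<1$ --- the case this theorem is meant to add, and the one you dismiss as bookkeeping of constants --- integral operators of this type are not bounded on $L^{p,q}_*$ (spiky functions have small $L^p$ quasi-norm but large local $L^1$ mass), so the inequality cannot simply be quoted.

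What is missing is the quasi-Banach analogue of the norm control behind \cite[Theorem 26]{DPT2022}: for $F=\sum_{m\le N}V_{\phi _m}f_m$ one must first prove the self-improvement $\nm{F}{\sfW (\omega ,\ell ^{p,q}_*)}\lesssim \nm{F}{L^{p,q}_{*,(\omega )}}$, with constant depending only on $N$, $\mathscr G$ and the weight, before Proposition \ref{Prop:ProjMapModCont} can be applied. For a single term ($N=1$) this is exactly \eqref{Eq:ModWienerConn2}, which is itself a nontrivial result (cf. \cite{GaSa,Rau1,Toft25}); for sums over several distinct windows it does not follow formally from the $N=1$ case (extracting the summands is what you are trying to justify), and the natural proof uses the reproducing identity $F=\sum _m P_{\phi _m}F$, whose kernel is dominated by $\sum _m |V_{\phi _m}\phi _m|$ evaluated at the difference of the arguments, together with a Galperin--Samarah/Rauhut type argument as in the proofs of \eqref{Eq:ModWienerConn2} and Proposition \ref{Prop:ProjMapModCont}. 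Once this lemma is in place, your estimate $\nm{f_{j,n}}{W^{p_j,q_j}_{(\omega _j)}}\lesssim \nm{F_j}{L^{p_j,q_j}_{*,(\omega _j)}}$ holds and the rest of your argument (finite $r$-triangle inequality, Theorem \ref{Thm:MultMod2}, density) goes through; without it, the chain breaks exactly at the point where the quasi-Banach novelty lies.
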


\par

We omit the proof which is a slight modification of
the proof of Theorem 29 in \cite{DPT2022}.

\par

We end the paper by formally demonstrating how the Gabor
product arises in a phase space version of the cubic
Schr{\"o}dinger equation.
Consider the elliptic nonlinear Schr{\"o}dinger
equation (NLSE) given by
\begin{equation}
i \frac{\partial \psi}{\partial t} + \Delta \psi + \lambda |
\psi|^{2} \psi =0,
\label{eqNLSE2}
\end{equation}
subject to the initial condition:
\begin{equation*}
\psi (x,0)= \varphi (x).
\label{eqNLSE3}
\end{equation*}
Here $\lambda  = \pm 1 $ stands for an attracting
$(\lambda =+1)$ or repulsive $(\lambda =-1)$
power-law nonlinearity, and the Laplacian is given by
$$
\Delta = \sum_{j=1}^d \frac{\partial^2}{\partial x_j^2}.
$$
Thus we consider $ \psi = \phi (x,t) $ with
$ x \in \rr {d},$ and $ t$  in an open interval
$I \subseteq \mathbf{R}$.

\par

Using the following intertwining relations
\begin{equation*}
V_\phi (x_j \psi) = - D_{\xi _j} V_g (\psi),
\qquad
V_\phi ( D_{x_j} \psi ) = \left( \xi _j +D_{x_j} \right )
V_\phi \psi,
\label{eqIntertwiners}
\end{equation*}
$ j=1, \cdots, d$, and assuming that $\phi$ is a
real-valued window, we obtain upon application of
the STFT $V_\phi$ to (\ref{eqNLSE2}) that
\begin{equation}
i \frac{\partial F}{\partial t}
-
\sum_{j=1}^d \left(\xi _j + D_{x_j} \right)^2 F + \lambda
\widetilde F \natural_\phi F \natural_\phi F =0.
\label{eqSTFTCubicNLSE}
\end{equation}
Here, $D_{x_j}=-i \frac {\partial}{\partial x_j}$,
\begin{multline*}
F(x,\xi ,t) = V_\phi (\psi(\cdo ,t)) (x,\xi )
\\[1ex]
=
(2\pi )^{-\frac d2}
\int _{\rr d}
\psi(y,t) \overline{\phi (y-x)} e^{-i\scal y{\xi}}
\, dy, \quad x,\xi \in \rr d, \ t \in \mathbf{R},
\end{multline*}
and $\widetilde F $ is given by
\begin{equation}
\widetilde F(x, \xi ) = \overline{F (x, - \xi )}.
\label{eqInvolution}
\end{equation}

By considering \eqref{eqSTFTCubicNLSE}  the phase-space
formulation of the initial value problem may be
well-posed for more general initial distributions.
This means that the phase-space formulation "contains"
the solutions of the standard NLSE, but it is richer,
as it admits other solutions. We refer to
\cite{Dias1,Dias2,Dias3}, where phase-space
extensions are explored in several different contexts.

\par

Let us conclude by noticing that \eqref{eqSTFTCubicNLSE} contains
the triple product. Thus,  its qualitative analysis calls for a multilinear extension of Theorems \ref{Thm:MultMod2}
and  \ref{TheoremModulationSpace4}. Then the conditions \eqref{Eq:LebExpHolderYoung2} and
\eqref{Eq:WeightCondHolderMod} become more involved, see \cite{Toft26}.
Such analysis demands a more technical tools and arguments and goes beyond the scope of this survey article.

\appendix

\section{Some properties of Wiener
amalgam spaces}\label{App:A}

\par

There are convenient characterizations of
modulation spaces in the framework of
Gabor analysis.

Let $\omega _0\in \mascP _E(\rr d)$,
$\omega \in \mascP _E(\rr {2d})$, $p,q,r\in (0,\infty]$,
$Q_{d}=[0,1]^{d}$ be the unit cube,
and set for measurable $f$ on $\rr d$,
\begin{equation}\label{Eq:WienerQuasiNormsSimple}
\nm f{\sfW ^r(\omega _0,\ell ^{p})} \equiv \nm {a_0}{\ell ^{p}(\zz {d})}
\end{equation}
when
$$
a_0(j) \equiv \nm {f\cdot \omega _0}{L^r (j+Q_{d})},
\qquad
j\in \zz d,
$$
and measurable $F$ on $\rr {2d}$,
\begin{equation}\label{Eq:WienerQuasiNorms}
\nm F{\sfW ^r(\omega ,\ell ^{p,q})} \equiv \nm a{\ell ^{p,q}(\zz {2d})}
\quad \text{and}\quad
\nm F{\sfW (\omega ,\ell ^{p,q}_*)} \equiv \nm a{\ell ^{p,q}_*(\zz {2d})}
\end{equation}
when
$$
a(j,\iota ) \equiv \nm {F\cdot \omega}{L^r ((j,\iota )+Q_{2d})},
\qquad
j,\iota \in \zz d.
$$
The Wiener amalgam space
$$
\sfW ^r(\omega _0,\ell ^{p}) = \sfW ^r(\omega _0,\ell ^{p}(\zz {d}))
$$
consists of all measurable $f\in L^r _{loc}(\rr {d})$
such that $\nm F{\sfW ^r(\omega _0,\ell ^{p})}$ is finite, and
the Wiener amalgam spaces
$$
\sfW ^r(\omega ,\ell ^{p,q}) = \sfW ^r(\omega ,\ell ^{p,q}(\zz {2d}))
\quad \text{and}\quad
\sfW ^r(\omega ,\ell ^{p,q}_*) = \sfW ^r(\omega ,\ell ^{p,q}_*(\zz {2d}))
$$
consist of all measurable $F\in L^r _{loc}(\rr {2d})$
such that $\nm F{\sfW ^r(\omega ,\ell ^{p,q})}$ respectively
$\nm F{\sfW ^r(\omega ,\ell ^{p,q}_*)}$ are finite. We observe that
$\sfW ^r(\omega _0,\ell ^{p})$ is often denoted by
$W(L^r,L^p_{(\omega )})$ or $W(L^r,L^p_{(\omega )})$ in
the literature (see e.{\,}e. \cite{Fei3,FeiGro1,GaSa,Rau1}).

The topologies
are defined through their respectively quasi-norms in
\eqref{Eq:WienerQuasiNormsSimple} and
\eqref{Eq:WienerQuasiNorms}.
%
%
%
%
%
%
For conveniency we set
$$
\sfW (\omega ,\ell ^{p,q})=\sfW ^\infty (\omega ,\ell ^{p,q})
\quad \text{and}\quad
\sfW (\omega ,\ell ^{p,q}_*)=\sfW ^\infty (\omega ,\ell ^{p,q}_*),
$$
and if in addition $\omega =1$, we set
$$
\sfW (\ell ^{p,q})=\sfW (\omega ,\ell ^{p,q})
\quad \text{and}\quad
\sfW (\ell ^{p,q}_*)=\sfW (\omega ,\ell ^{p,q}_*).
$$

\par

Obviously, $\sfW ^r(\omega _0,\ell ^{p})$ and
$\sfW ^r(\omega ,\ell ^{p,q})$ increase with $p,q$,
decrease with $r$, and
\begin{alignat}{1}
\sfW (\omega ,\ell ^{p,q})
&\hookrightarrow
L^{p,q}_{(\omega )}(\rr {2d}) \cap \Sigma _1'(\rr {2d})
\hookrightarrow
L^{p,q}_{(\omega )}(\rr {2d})
\hookrightarrow
\sfW ^r(\omega ,\ell ^{p,q})
\intertext{and}
\nm \cdo {\sfW ^r(\omega ,\ell ^{p,q})}
&\le
\nm \cdo {L^{p,q}_{(\omega )}}
\le
\nm \cdo {\sfW (\omega ,\ell ^{p,q})},
\qquad
r\le\min (1,p,q).
\end{alignat}
On the other hand, for modulation spaces we have
\begin{equation}\label{Eq:ModWienerConn1}
f\in M^{p,q}_{(\omega )}(\rr d)
\quad \Leftrightarrow \quad
V_\phi f\in L^{p,q}_{(\omega )}(\rr {2d})
\quad \Leftrightarrow \quad
V_\phi f\in \sfW ^r(\omega ,\ell ^{p,q})
\end{equation}
with
\begin{equation}\label{Eq:ModWienerConn2}
\nm f{M^{p,q}_{(\omega )}} = \nm {V_\phi f}{L^{p,q}_{(\omega )}}
\asymp
\nm {V_\phi f}{\sfW ^r(\omega ,\ell ^{p,q})}.
\end{equation}
The same holds true with $W^{p,q}_{(\omega )}$, $L^{p,q}_{*,(\omega )}$
and $\sfW (\omega ,\ell ^{p,q}_*)$ in place of $M^{p,q}_{(\omega )}$,
$L^{p,q}_{(\omega )}$ and $\sfW (\omega ,\ell ^{p,q})$, respectively,
at each occurrence. (For $r=\infty$ , see \cite{Gro2} when $p,q\in [1,\infty ]$,
\cite{GaSa,Toft13} when $p,q\in (0,\infty ]$, and for $r\in (0,\infty ]$, see
\cite{Toft25}.)

\par

We have now the following result on the projection operator
$P_\phi$ in \eqref{Eq:ProjphiDef}
when acting on Wiener amalgam spaces.

\par

\begin{prop}\label{Prop:ProjMapModCont}
Let $p,q\in (0,\infty ]$ and
$\phi \in \mascS (\rr d)\setminus 0$. Then
$P_\phi$ from $\mascS '(\rr {2d})$ to $\mascS '(\rr {2d})$,
and $V_\phi ^*$ from $\mascS '(\rr {2d})$ to $\mascS '(\rr d)$
restrict to continuous mappings
\begin{align}
P_\phi : \sfW (\ell ^{p,q}(\zz {2d}))
&\to
V_\phi (M^{p,q}(\rr d)),
\label{Eq:ProjMapModCont1}
\\[1ex]
P_\phi : \sfW (\ell ^{p,q}_*(\zz {2d}))
&\to
V_\phi (W^{p,q}(\rr d)),
\label{Eq:ProjMapModCont2}
\\[1ex]
V_\phi ^* : \sfW (\ell ^{p,q} (\zz {2d}))
&\to
M^{p,q}(\rr d)
\label{Eq:ProjMapModCont3}
\intertext{and}
V_\phi ^* : \sfW (\ell ^{p,q}_*(\zz {2d}))
&\to
W^{p,q}(\rr d).
\label{Eq:ProjMapModCont4}
\end{align}
\end{prop}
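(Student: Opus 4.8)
The plan is to reduce all four mapping statements to the single pair of norm estimates
\[
\nm {P_\phi F}{\sfW (\ell ^{p,q})} \lesssim \nm F{\sfW (\ell ^{p,q})}
\quad \text{and}\quad
\nm {P_\phi F}{\sfW (\ell ^{p,q}_*)} \lesssim \nm F{\sfW (\ell ^{p,q}_*)},
\]
valid for $F$ in the respective space. Indeed, fix $F\in \sfW (\ell ^{p,q})$ (resp. $F\in \sfW (\ell ^{p,q}_*)$); since $\ell ^{p,q}\subseteq \ell ^\infty$, such $F$ lies in $L^\infty (\rr {2d})\subseteq \mascS '(\rr {2d})$, so by \eqref{Eq:STFTAdjCont} and \eqref{Eq:ProjphiDef} the element $f:=\nm \phi{L^2}^{-2}V_\phi ^*F\in \mascS '(\rr d)$ is well defined and $P_\phi F=V_\phi f$, $V_\phi ^*F=\nm \phi{L^2}^2 f$. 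Granting the first estimate, $\nm {V_\phi f}{\sfW (\ell ^{p,q})}<\infty$, so by \eqref{Eq:ModWienerConn1}--\eqref{Eq:ModWienerConn2} (with $r=\infty$ and $\omega =1$) we get $f\in M^{p,q}(\rr d)$ with $\nm f{M^{p,q}}\asymp \nm {V_\phi f}{\sfW (\ell ^{p,q})}\lesssim \nm F{\sfW (\ell ^{p,q})}$; since $P_\phi F=V_\phi f$, this gives both \eqref{Eq:ProjMapModCont1} and \eqref{Eq:ProjMapModCont3}. The amalgam estimate for $\sfW (\ell ^{p,q}_*)$ yields \eqref{Eq:ProjMapModCont2} and \eqref{Eq:ProjMapModCont4} in the same way, now with $W^{p,q}(\rr d)$.

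For the key estimates I would argue exactly as in the proof of Theorem~\ref{Thm:ConvMod1}. By \eqref{Eq:ProjOpTwistedConv}, $P_\phi F=\nm \phi{L^2}^{-2}\,\Phi *_V F$ with $\Phi :=V_\phi \phi \in \mascS (\rr {2d})$ (Proposition~\ref{Prop:ExtSTFTSchwartz}), so \eqref{Eq:TwistConvDef} gives the pointwise bound $|P_\phi F(z)|\le c\,(|\Phi |*|F|)(z)$, with $*$ the ordinary convolution on $\rr {2d}$ (the integral converges since $\Phi$ is Schwartz and $F\in L^\infty$). Put $Q_{2d}=[0,1]^{2d}$, $a(k)=\nm F{L^\infty (k+Q_{2d})}$ and $h(m)=\sup _{u\in m+[-1,1]^{2d}}|\Phi (u)|$ for $k,m\in \zz {2d}$, so that $\nm a{\ell ^{p,q}}=\nm F{\sfW (\ell ^{p,q})}$. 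A covering argument as in \eqref{Eq:G(x,lambda)Est}--\eqref{Eq:GExprEst} then gives $\nm {P_\phi F}{L^\infty (l+Q_{2d})}\le c\sum _{k\in \zz {2d}}h(l-k)\nm F{L^1(k+Q_{2d})}\le c\,(h*a)(l)$. Since $\Phi$ is a Schwartz function on $\rr {2d}$, it decays rapidly in each of its two $\rr d$-blocks separately, so $h$ is dominated by a tensor product $h_1\otimes h_2$ with $h_1,h_2\in \ell ^\rho (\zz d)$ for $\rho :=\min (1,p,q)>0$. Hence, by monotonicity of the mixed quasi-norm followed by Proposition~\ref{Prop:HolderYoungDiscrLebSpaces}(2) applied one variable at a time (together with the quasi-triangle inequality $\nm {\sum _\mu c_\mu}{\ell ^p}^{\rho}\le \sum _\mu \nm {c_\mu}{\ell ^p}^{\rho}$, used to pull the second-variable sum outside the first-variable norm), one obtains $\nm {h*a}{\ell ^{p,q}}\le \nm {(h_1\otimes h_2)*a}{\ell ^{p,q}}\lesssim \nm {h_1}{\ell ^{\min (1,p)}}\nm {h_2}{\ell ^{\rho}}\nm a{\ell ^{p,q}}$. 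This is the first estimate, and the $\sfW (\ell ^{p,q}_*)$ estimate follows verbatim with the two $\rr d$-variables interchanged.

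The step I expect to be most delicate is this mixed-norm convolution estimate in the quasi-Banach range: when $p$ or $q$ is below $1$ one cannot use Minkowski's inequality, and — crucially — convolution by a Schwartz function is not bounded on $L^{p,q}(\rr {2d})$ in that range, which is exactly why one must pass to the local-$L^\infty$ Wiener amalgam norms and manage the sums through the quasi-triangle inequality with exponent $\min (1,p,q)$; moreover, since the Young inequality available to us (Proposition~\ref{Prop:HolderYoungDiscrLebSpaces}) is stated only for single-index sequence spaces, one needs the rapid decay of $V_\phi \phi$ in each variable separately in order to reduce the genuine two-dimensional convolution to two one-variable applications. A shorter route, if one is willing to quote it, is to observe that Proposition~\ref{Prop:ProjMapModCont} is the unweighted special case of \cite[Proposition~3.6]{Toft26}.
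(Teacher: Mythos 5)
Your argument is correct, but it is worth noting that the paper itself does not prove Proposition \ref{Prop:ProjMapModCont} at all: it simply quotes \cite[Proposition 3.6]{Toft26}, pointing to \cite{FeiGro1,Gro2} for the Banach case $p,q\ge 1$ and to \cite{GaSa,Rau2} for the quasi-Banach range, so any written proof is "a different route" from the survey's. What you supply is essentially the Galperin--Samarah/Rauhut argument, carried out in the same discretization style as the survey's own proof of Theorem \ref{Thm:ConvMod1}: represent $P_\phi F$ via \eqref{Eq:ProjOpTwistedConv} as a twisted convolution with $V_\phi \phi \in \mascS (\rr {2d})$, dominate it by an ordinary convolution, pass to the local $L^\infty$ sequence $b(l)=\nm {P_\phi F}{L^\infty (l+Q_{2d})}$, and control $b\le c\, h*a$ through the $\rho$-triangle inequality with $\rho =\min (1,p,q)$ together with Proposition \ref{Prop:HolderYoungDiscrLebSpaces}(2) applied one variable at a time, the tensor splitting $h\le h_1\otimes h_2$ being available precisely because $V_\phi \phi$ is Schwartz; the reduction of all four statements to the two amalgam estimates via $P_\phi F=V_\phi f$, $f=\nm \phi {L^2}^{-2}V_\phi ^*F$, and \eqref{Eq:ModWienerConn1}--\eqref{Eq:ModWienerConn2} is also sound (note that \eqref{Eq:ModWienerConn2} is itself an imported fact from \cite{GaSa,Toft13,Toft25}, so no circularity arises within the survey's logic, but your proof is only as self-contained as that equivalence). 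You also correctly identify the genuine crux in the quasi-Banach range — the failure of Minkowski/Young on $L^{p,q}$ for $p$ or $q$ below $1$, forcing the passage to local sup norms — which is exactly why the cited references handle this case separately. What your route buys is a self-contained proof consistent with the techniques already displayed in Section \ref{sec2}; what the paper's route buys is brevity, at the cost of sending the reader to \cite{Toft26} for the weighted and $r<\infty$ versions that the citation also covers. The only points worth tightening are routine: justify that for $F\in L^\infty (\rr {2d})$ the distributional extension of $P_\phi$ agrees with the absolutely convergent integral in \eqref{Eq:TwistConvDef}, and state explicitly the one-line verification that the exponent triple $(\min (1,p),p,p)$ satisfies \eqref{Eq:LebExpYoung}.
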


\par

We refer to \cite[Proposition 3.6]{Toft26} for the proof of
Proposition \ref{Prop:ProjMapModCont} and
to \cite{FeiGro1,FeiGro3,Gro2,Rau1,Rau2,Toft26}
for some facts about the operators $P_\phi$ and $V_\phi ^*$,

\par

For $p,q\ge 1$, i.{\,}.e. the case when all spaces are Banach spaces,
proofs of Proposition \ref{Prop:ProjMapModCont} can be found
in e.{\,}g. \cite{Gro2} as well as in abstract forms in \cite{FeiGro1}.
In the general case when $p,q>0$, we refer to \cite{GaSa,Rau2},
since proofs of Proposition \ref{Prop:ProjMapModCont}
are essentially given there.

\par

\end{document}